\subjclass[2010]{32A15, 32A36, 32A50, 32A60, 42C15}
\newcommand{\Rdst}{{\mathbb{R}^d}}
\newcommand{\Rst}{{\mathbb{R}}}
\newcommand{\Nst}{{\mathbb{N}}}
\newcommand{\Zdst}{{\mathbb{Z}^d}}
\newcommand{\norm}[1]{\lVert#1\rVert}
\newcommand{\bC}{{\mathbb{C}}}
\def\supp{\operatorname{supp}}
\newcommand{\sep}{\mathop{\mathrm{sep}}}
\newcommand{\rel}{\mathop{\mathrm{rel}}}
\newcommand{\wmes}{{W(\mathcal{M}, L^\infty)}}
\newcommand{\map}{\tau}
\newcommand{\mapj}[1]{\map_j(#1)}
\newcommand{\weakconv}{\xrightarrow{w}}
\newcommand{\lipconv}{\xrightarrow{Lip}}
\newcommand{\Cn}{\mathbb{C}^n}
\newcommand{\func}{F}
\newcommand{\Func}{\mathrm{F}}
\newcommand{\env}{\Theta}
\newcommand{\ip}[2]{\ensuremath{\left<#1,#2\right>}}
\newcommand{\sett}[1]{\ensuremath{\left \{ #1 \right \}}}
\newcommand{\abs}[1]{\ensuremath{\left| #1 \right| }}
\newtheorem{theorem}{Theorem}[section]
\newtheorem{lemma}[theorem]{Lemma}
\newtheorem{prop}[theorem]{Proposition}
\newtheorem{cor}[theorem]{Corollary}
\newtheorem{definition}[theorem]{Definition}
\newtheorem{rem}[theorem]{Remark}
\begin{document}

\title [Strict density inequalities]{Strict density inequalities for
  sampling and interpolation in weighted spaces of holomorphic
  functions}
\begin{abstract}
Answering a question of Lindholm, we prove strict density inequalities
for sampling and interpolation in Fock spaces of entire functions
in several complex variables defined by a plurisubharmonic weight. In
particular, these spaces do not admit a set that is simultaneously
sampling and interpolating.  To prove optimality of the density conditions, we construct
sampling sets with a density arbitrarily close to the critical  density.

The techniques combine methods from several complex variables
(estimates for $\bar \partial$) and the theory of localized frames in general
reproducing kernel Hilbert spaces (with no analyticity assumed). The
abstract results on Fekete points and deformation of frames may be  of independent interest.
\end{abstract}
\author[K. Gr\"{o}chenig] {Karlheinz Gr\"{o}chenig}
\address{Faculty of Mathematics \\
University of Vienna \\
Oskar-Morgenstern-Platz 1 \\
A-1090 Vienna, Austria}
\email{karlheinz.groechenig@univie.ac.at}

\author[A. Haimi]{Antti Haimi}
\address{Acoustics Research Institute, Austrian Academy of Sciences,
Wohllebengasse 12-14 A-1040, Vienna, Austria}
\email{ahaimi@kfs.oeaw.ac.at}

\author[J. Ortega-Cerd\`{a}]{Joaquim Ortega-Cerd\`{a}}
\address{Dept.\ Matem\`atiques i Inform\`atica,
Universitat  de Barcelona, Gran Via 585, 08007 Bar\-ce\-lo\-na, Spain}
\email{jortega@ub.edu}

\author[J. L. Romero]{Jos\'{e} Luis Romero}

\address{Faculty of Mathematics,
University of Vienna,
Oskar-Morgenstern-Platz 1,
A-1090 Vienna, Austria\\and\\
Acoustics Research Institute, Austrian Academy of Sciences,
Wohllebengasse 12-14 A-1040, Vienna, Austria}
\email{jose.luis.romero@univie.ac.at,
jlromero@kfs.oeaw.ac.at}

\thanks{A. H. and J. L. R. gratefully acknowledge support from the WWTF grant INSIGHT (MA16-053).
J. L. R. gratefully acknowledges support from the Austrian Science Fund (FWF): P 29462 - N35.
A. H. was supported by the Austrian Science Fund (FWF) grant P 31153-N35.
J. O-C was supported by projects MTM2017-83499-P from the Ministerio de
Econom\'{\i}a y Competitividad, Gobierno de Espa\~na and by the Generalitat de
Catalunya (project 2017 SGR 358).
}

\maketitle

\section{Introduction and results}
Let $\phi: \Cn \to \mathbb{R} $ be a plurisubharmonic function, and assume that there are
constants $m, M > 0$ such that
\begin{equation} \label{eq:subharm_bounds}
i m \partial \bar \partial |z|^2 \leq i \partial \bar \partial \phi \leq M i \partial \bar \partial |z|^2
\end{equation}
in the sense of positive currents~\cite{GL86}.
For $1 \leq p < \infty$, we let $A^p_\phi$ be the space of entire functions on $\Cn$ equipped with
the norm
$$
\Vert f \Vert_{\phi,p}^p := \int_{\Cn} |f(z)|^p e^{-p \phi(z)} dm(z),
$$
where $dm$ denotes the Lebesgue  measure. For $p = \infty$, we use the norm
$$
\Vert f \Vert_{\phi, \infty} = \sup_{z \in \Cn} |f(z)| e^{-\phi(z)}.
$$
Point evaluations are bounded linear  functionals, and therefore
$A^2_\phi$ is a reproducing kernel Hilbert space. We denote its reproducing kernel by
$K_{\phi}(z,w)$, or just $K(z,w)$ when it is not ambiguous. We also write
$K_{\phi,w}(z) := K_{\phi}(z,w)$.

A set $\Lambda \subseteq \Cn$ is called a \emph{sampling set} for
$A^p_{\phi}$ if there are constants $A,B>0$ such that
\begin{align*}
A \Vert f \Vert^p_{\phi,p} \leq \sum_{\lambda \in \Lambda} \abs{f(\lambda)}^p e^{-p\phi(\lambda)}
\leq B \Vert f \Vert^p_{\phi,p}, \qquad \text{ for all } \, f \in A^p_\phi,
\end{align*}
with the usual modification for $p=\infty$. The  constants $A,B$  are
called the  stability
constants.
A set $\Lambda$ is called an \emph{interpolating set} for $A^p_\phi $,
if for every
$a \in \ell^p_{\phi}(\Lambda )$ there exists a function $f \in A^p_{\phi}$ such that
\begin{align*}
f(\lambda) = a_\lambda, \qquad \lambda \in \Lambda.
\end{align*}
In this case, there is always $C_p>0$
and a choice of $f$ such that $\norm{f}_{\phi,p} \leq C_p \norm{a}_{p,\phi}$.

This article is concerned with the density of sampling and interpolating sets.
The upper and lower \emph{weighted Beurling upper densities} of $\Lambda$ are defined by
\begin{align}
\label{eq_bd}
D^+_\phi(\Lambda): &=  \limsup_{r \to \infty} \sup_{z \in \Cn} \frac{ \# ( \Lambda \cap B_r(z))} {
\int_{B_r(z)} K(w,w) e^{-2\phi(w)} dm(w)},
\\
\label{eq_bd2}
D^-_\phi(\Lambda): &=  \liminf_{r \to \infty} \inf_{z \in \Cn} \frac{ \# ( \Lambda \cap B_r(z))} {
\int_{B_r(z)} K(w,w) e^{-2\phi(w)} dm(w)}.
\end{align}
For the standard weight $\phi(z)=\pi \abs{z}^2/2$, one recovers
Beurling's classical densities, since the reproducing kernel is
$K(w,z) = e^{\pi \bar{w} z}$.
In dimension $n=1$, sampling and interpolating sets are characterized
completely by
density conditions \cite{seip92, ortega1998beurling, marco2003interpolating}. In higher dimensions, only the necessity
of the conditions can be expected to hold. We investigate this matter
in several directions.

For $2$-homogeneous weights, Lindholm \cite{lindholm2001sampling}
showed the following necessary density conditions:
\begin{align}
  \label{eq:aaaa}
& \mbox{ If $\Lambda$ is a sampling set for } A^p_\phi, \mbox{ then }
  D^-_\phi(\Lambda) \geq 1 \, .
\\
\label{eq:bbbb}
&\mbox{ If $\Lambda$ is an interpolating set for } A^p_\phi, \mbox{
                  then } D^{+}_\phi (\Lambda) \leq 1 \, .
\end{align}
We will show that the homogeneity of $\phi $ may be removed and that
the necessary conditions \eqref{eq:aaaa} and \eqref{eq:bbbb}
are  valid for all  weights $\phi$ satisfying
\eqref{eq:subharm_bounds}.  This more general result follows from
 the abstract density theory \cite{bacahela06, fuhr2016density}, which
 is applicable due to  the off-diagonal decay of the reproducing
 kernel. See Section \ref{sec_nonstr} for
 the details.

Strictly speaking, the densities in \eqref{eq_bd} and \eqref{eq_bd2}
differ from those used in
\cite{ortega1998beurling, lindholm2001sampling}, where  the following
densities are used instead:
\begin{align*}
\widetilde D^+_\phi(\Lambda)&: =  \limsup_{r \to \infty} \sup_{z \in \Cn} \frac{
\# ( \Lambda \cap B_r(z))} {
\int_{B_r(z)} (i \partial \bar \partial \phi)^n},
\\
\widetilde D^-_\phi(\Lambda)&: =  \liminf_{r \to \infty} \inf_{z \in \Cn} \frac{
\# ( \Lambda \cap B_r(z))} {
\int_{B_r(z)} (i \partial \bar \partial \phi)^n}.
\end{align*}
It can be shown by combining
results of \cite{ortega1998beurling}, \cite{berndtsson1995interpolation} and
\cite{fuhr2016density} that in $1$ dimension we have
\begin{equation} \label{eq:densityconnection}
\widetilde D^+_\phi(\Lambda) =\frac{1}{\pi^n n!}D^+_\phi(\Lambda),
\end{equation}
and similarly for the lower densities. The relation \eqref{eq:densityconnection} holds also in several variables
if we assume in addition that $\phi$ is $2$-homogeneous
\cite{lindholm2001sampling} - see also \cite[Section 5.4]{fuhr2016density}.
In general, it remains an open problem to decide when both densities coincide.

As our first contribution we  show that these results are sharp.
\begin{theorem}\label{sharp}
Let $\phi: \Cn \to \mathbb{R} $ be a plurisubharmonic function satisfying
\eqref{eq:subharm_bounds}. Then given $\varepsilon>0$, there exists a set $\Lambda \subseteq \Cn$ that is
interpolating for $A^2_{(1+\varepsilon)\phi}$ and sampling for $A^2_{(1-\varepsilon)\phi}$. As a consequence,
\begin{align}
\label{eq_infsup}
\inf_{\Lambda \in SS} D_\phi^{+}(\Lambda)=
\sup_{\Lambda \in SI} D_\phi^{-}(\Lambda)=1 \, ,
\end{align}
where the infimum runs over all sampling sets for $A_\phi ^2$ and the
supremum over all interpolation sets for $A_\phi ^2$.
\end{theorem}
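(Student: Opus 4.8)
The plan is to prove the existence statement — for each $\varepsilon>0$ a single set $\Lambda$ that is interpolating for $A^2_{(1+\varepsilon)\phi}$ and sampling for $A^2_{(1-\varepsilon)\phi}$ — and then to deduce \eqref{eq_infsup} formally. The construction rests on three ingredients: a notion of \emph{Fekete points} for the kernel $K_\phi$, a deformation (stability) principle for frames in a reproducing kernel Hilbert space with a localized kernel, and H\"ormander-type $\bar\partial$-estimates that capitalize on the slack gained by passing from $\phi$ to $(1\mp\varepsilon)\phi$. Throughout we use the standard consequences of \eqref{eq:subharm_bounds}, proved by solving $\bar\partial$ against $\phi$: the on-diagonal bound $K_\phi(w,w)\,e^{-2\phi(w)}\asymp\det(i\partial\bar\partial\phi)(w)$ and the off-diagonal decay $|K_\phi(z,w)|\lesssim e^{\phi(z)+\phi(w)}e^{-c|z-w|}$, which put $A^2_\phi$ in the scope of the abstract density theory for localized kernels.

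\emph{Step 1: a critical set.} For each $R$ I would fix a finite-dimensional model $\cP_R\subseteq A^2_\phi$, say the span of $\{K_{\phi,w}:w\in\Lambda_R^0\}$ for a fine separated net $\Lambda_R^0\subseteq B_R(0)$, and let $\Lambda_R=\{z_1,\dots,z_{N_R}\}$ with $N_R=\dim\cP_R$ maximize the Gramian determinant $\det\bigl(K_\phi(z_i,z_j)e^{-\phi(z_i)-\phi(z_j)}\bigr)_{i,j}$. The abstract Fekete-point theory for localized kernels should give: the sets $\Lambda_R$ are uniformly separated; their dual (Lagrange) functions in $\cP_R$ are uniformly well localized; and the normalized counting measures of the $\Lambda_R$ equidistribute with respect to $K_\phi(w,w)e^{-2\phi(w)}\,dm(w)$. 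A diagonal and compactness argument then produces a limit set $\Lambda$ that is separated, has $D^-_\phi(\Lambda)=D^+_\phi(\Lambda)=1$, for which $\{K_{\phi,\lambda}e^{-\phi(\lambda)}\}_{\lambda\in\Lambda}$ is Bessel, and which is simultaneously a uniform sampling and a uniform interpolating family for every finite model $\cP_R$. By the necessary conditions \eqref{eq:aaaa}--\eqref{eq:bbbb}, $\Lambda$ is neither sampling nor interpolating for $A^2_\phi$ itself: it sits exactly on the threshold.

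\emph{Step 2: deforming the weight.} Passing from $\phi$ to $(1-\varepsilon)\phi$ multiplies the curvature by $1-\varepsilon$, so the natural scale of $K_{(1-\varepsilon)\phi}$ is strictly larger than that of $K_\phi$ while the curvature stays bounded below by $(1-\varepsilon)m\,i\partial\bar\partial|z|^2$; the fixed set $\Lambda$, merely borderline for $\phi$, therefore oversamples $A^2_{(1-\varepsilon)\phi}$ and, dually, underinterpolates $A^2_{(1+\varepsilon)\phi}$. To make sampling quantitative: given $f\in A^2_{(1-\varepsilon)\phi}$, use the Fekete--Lagrange functions and cutoffs $\chi_\lambda$ adapted to the natural scale to form $F_0=\sum_\lambda\chi_\lambda\,f(\lambda)\,\ell_\lambda$; then $\bar\partial F_0$ is supported on the overlaps of the $\chi_\lambda$ and, by the localization of the $\ell_\lambda$, is controlled by $\bigl(\sum_\lambda|f(\lambda)|^2e^{-2(1-\varepsilon)\phi(\lambda)}\bigr)^{1/2}$. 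Solve $\bar\partial u=\bar\partial F_0$ with H\"ormander's theorem against $(1-\varepsilon)\phi+\psi$, where $\psi$ is an auxiliary plurisubharmonic weight whose Laplacian is large and concentrated near $\Lambda$ (a smoothed sum of logarithmic bumps); the residual positivity $\varepsilon\,i\partial\bar\partial\phi\ge\varepsilon m\,i\partial\bar\partial|z|^2$ is exactly what absorbs $\bar\partial F_0$ and the defect of $\psi$. Then $F=F_0-u\in A^2_{(1-\varepsilon)\phi}$ with $\norm{F}_{(1-\varepsilon)\phi}\lesssim\bigl(\sum_\lambda|f(\lambda)|^2e^{-2(1-\varepsilon)\phi(\lambda)}\bigr)^{1/2}$ and $F=f$ on $\Lambda$ up to an error small in the same norm; since $\Lambda$ is a uniqueness set, a Neumann series — convergent precisely because the $\varepsilon$-slack makes the error operator a contraction — recovers $f$ from its samples and yields the lower sampling bound, the upper bound being the Bessel estimate. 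Interpolation for $A^2_{(1+\varepsilon)\phi}$ is dual: given data $a\in\ell^2_{(1+\varepsilon)\phi}(\Lambda)$, build $F_0$ with $F_0(\lambda)=a_\lambda$ from the Lagrange functions and cutoffs, solve $\bar\partial u=\bar\partial F_0$ against $(1+\varepsilon)\phi+\psi$ with $\psi$ now carrying genuine $2\log|z-\lambda|$ poles so that $u(\lambda)=0$ and $F=F_0-u$ still interpolates, the extra $\varepsilon\,i\partial\bar\partial\phi$ once more making the estimate close. Both halves instantiate the same deformation-stability principle: a borderline system in a localized reproducing kernel Hilbert space becomes a genuine frame (respectively Riesz sequence) under a small but definite change of the underlying kernel.

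\emph{Step 3, and the main obstacle.} Applying Steps 1--2 to the weights $\phi/(1+\varepsilon)$ and $\phi/(1-\varepsilon)$, which also satisfy \eqref{eq:subharm_bounds}, produces an interpolating set and a sampling set for $A^2_\phi$ itself; together with the necessary bounds \eqref{eq:aaaa}--\eqref{eq:bbbb} (which give $D^+_\phi\ge1$ on sampling sets and $D^-_\phi\le1$ on interpolating sets) and the behaviour of the normalizing integrals under $\phi\mapsto c\phi$ (they change by the factor $c^{-n}(1+o(1))$ as $c\to1$), this shows that the densities of these sets tend to $1$, which is \eqref{eq_infsup}. I expect Step 2 to be the main obstacle: one has to show that an \emph{arbitrarily small} change of the weight upgrades the borderline Fekete system — which by \eqref{eq:aaaa}--\eqref{eq:bbbb} lies literally on the boundary of both regimes — into a genuine frame or Riesz sequence, which forces one to exploit the minute curvature budget $\varepsilon\,i\partial\bar\partial\phi\ge\varepsilon m\,i\partial\bar\partial|z|^2$ with no room to spare, via $\bar\partial$-estimates against a singular auxiliary weight whose error must be beaten below $1$. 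The quantitative Fekete-point theory of Step 1 — uniform separation, uniformly localized Lagrange functions, and exact equidistribution against $K_\phi(w,w)e^{-2\phi(w)}\,dm(w)$ — in the abstract setting of a localized reproducing kernel Hilbert space, with no analyticity invoked, is the other technically demanding ingredient.
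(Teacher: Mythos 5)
Your Step 1 is essentially the paper's route: Fekete points for finite-dimensional models maximizing a determinant, uniform separation from the uniform-localization property, and a weak limit yielding a separated set $\Lambda$ with Lagrange functions $l_\lambda$ of sup-norm $1$ and the $\ell^1$--$\ell^\infty$ sampling inequality $\norm{f}_\infty\le\sum_\lambda|f(\lambda)|$ (Theorem~\ref{th_fekete}). (Two small caveats: the equidistribution at density exactly $1$ is not established at this stage and is not needed for the existence claim --- the density statement \eqref{eq_infsup} is extracted afterwards from Theorem~\ref{th_non_strict} together with the uniform kernel convergence of Proposition~\ref{prop_unif_diag}, not from a $c^{-n}(1+o(1))$ scaling of the normalizing integrals, which has no closed-form justification for a general $\phi$; and the claim that \eqref{eq:aaaa}--\eqref{eq:bbbb} show $\Lambda$ is neither sampling nor interpolating for $A^2_\phi$ is backwards --- a set of density $1$ satisfies both necessary conditions.)

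The genuine gap is Step 2. Your mechanism for upgrading the borderline Fekete system to a sampling set for $A^2_{(1-\varepsilon)\phi}$ and an interpolating set for $A^2_{(1+\varepsilon)\phi}$ is a $\bar\partial$-correction with H\"ormander estimates against $(1\mp\varepsilon)\phi+\psi$, where $\psi$ is a plurisubharmonic weight with logarithmic singularities concentrated on $\Lambda$ whose curvature defect is absorbed by $\varepsilon\,i\partial\bar\partial\phi$, followed by a Neumann series. This is exactly the Berndtsson--Ortega-Cerd\`a sufficiency machinery, which works in one complex variable but is not available in $\Cn$ for $n>1$: the construction of such a singular $\psi$ from separation and density information on $\Lambda$ is precisely what fails in higher dimension (the paper states explicitly that the sufficiency results of \cite{berndtsson1995interpolation} do not extend, and this is why it avoids the route you propose). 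The paper's replacement is purely multiplicative and needs no $\bar\partial$-surgery at this stage: set $\widetilde{l}_\lambda(z)=l_\lambda(z)\,\widetilde{K_{\varepsilon\phi}}(z,\lambda)/\widetilde{K_{\varepsilon\phi}}(\lambda,\lambda)$. Since weights add under multiplication of analytic functions, $\widetilde{l}_\lambda\in V^1_{(1+\varepsilon)\phi}$ with uniform exponential decay, which solves $\ell^1$-interpolation for $(1+\varepsilon)\phi$; dually, for $f\in V^\infty_{(1-\varepsilon)\phi}$ and any $z_0$ the product $g=f\,\widetilde{K_{\varepsilon\phi}}(\cdot,z_0)/\widetilde{K_{\varepsilon\phi}}(z_0,z_0)$ lies in $V^1_\phi$, and applying the $\ell^1$--$\ell^\infty$ sampling inequality of Theorem~\ref{th_fekete} to $g$ gives $|f(z_0)|\lesssim\sup_\lambda|f(\lambda)|$. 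The universality theorem (Theorem~\ref{th_loc_ss}, via the Wiener-type Lemma~\ref{th_wiener_sub}) then transfers these $p=1$ and $p=\infty$ statements to $p=2$. Without this kernel-multiplication device (or a several-variables substitute for the singular-weight H\"ormander scheme, which you do not supply), your Step 2 does not go through.
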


The sampling part of Theorem \ref{sharp} is closely related to
the main result in \cite{bacala11}, which establishes the existence of
frames in an abstract setting whose density is arbitrarily close to
the critical density. Although it may
be possible to apply the results of \cite{bacala11} to our setting,
 it is far from clear how to overcome certain technical challenges
caused by  the subtleties of general plurisubharmonic weights, such as
the construction of an adequate ``reference frame'' or the
identification of the corresponding  abstract densities with
$D^{\pm}_\phi$.
Instead, in this paper we resort to a new technique based on Fekete points introduced
 in \cite{Nir}. This approach  also yields the existence of
 interpolating sets with density arbitrary  close to the  critical
  density.

Our second contribution  is to show that the supremum and infimum in
\eqref{eq_infsup} are not attained.  We will  prove that the inequalities in \eqref{eq:aaaa} and
\eqref{eq:bbbb} are in fact strict. This question was mentioned as  an
open problem  by Lindholm~\cite{lindholm2001sampling}, but remained
unanswered  even for the special case of  $2$-homogeneous weights.

\begin{theorem}
\label{th_intro}
Let $\phi: \Cn \to \mathbb{R} $ be a plurisubharmonic function satisfying
\eqref{eq:subharm_bounds} and let  $p \in [1,\infty]$.
\begin{itemize}
\item[(a)] If $\Lambda$ is a sampling set for $A^p_\phi$, then
$D^{-}_\phi(\Lambda)>1$.
\item[(b)] If $\Lambda$ is an interpolating set for $A^p_\phi$,
then $D^{+}_\phi (\Lambda)<1$.
\end{itemize}
\end{theorem}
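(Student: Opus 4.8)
Since the densities in \eqref{eq_bd}--\eqref{eq_bd2} do not involve $p$ and the sampling (resp.\ interpolation) property of $A^p_\phi$ is known to be independent of $p$ in this circle of ideas, I would first reduce to $p=2$. For $p=2$ a set $\Lambda$ is sampling exactly when the normalized reproducing kernels $\kappa_\lambda:=K_{\phi,\lambda}/\Vert K_{\phi,\lambda}\Vert$ form a frame for $A^2_\phi$, and interpolating exactly when they form a Riesz sequence. Under \eqref{eq:subharm_bounds} one has the pointwise comparison $K_\phi(z,z)e^{-2\phi(z)}\asymp 1$ (constants depending only on $m,M$) together with the Gaussian off-diagonal bound $|K_\phi(z,w)|\le C e^{\phi(z)+\phi(w)-c|z-w|}$, so $\{\kappa_\lambda\}$ is an intrinsically localized system and the abstract density theory of \cite{bacahela06,fuhr2016density} yields the non-strict inequalities $D^-_\phi(\Lambda)\ge 1$ (sampling) and $D^+_\phi(\Lambda)\le 1$ (interpolating). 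The two assertions are then dual to one another: interchange ``frame'' with ``Riesz sequence'', $D^-$ with $D^+$, sampling with interpolating; so I describe the plan for (a), the plan for (b) being parallel. It remains to exclude the equality $D^-_\phi(\Lambda)=1$ for a sampling set.

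The heart of the argument is a blow-up (weak-limit) reduction to a rigid model, in the spirit of the classical compactness arguments of Beurling. Write $N_\phi(E):=\int_E K_\phi(w,w)e^{-2\phi(w)}\,dm(w)$. Assume $\Lambda$ is sampling with $D^-_\phi(\Lambda)=1$. A Beurling-type tiling argument produces radii $r_k\to\infty$ and centers $z_k$ such that every sub-ball of $B_{r_k}(z_k)$ of a slowly growing radius carries a near-critical number of points of $\Lambda$. Recenter at $z_k$ and subtract a pluriharmonic correction: put $\phi_k:=\phi(z_k+\,\cdot\,)-\mathrm{Re}\,h_k$ with $h_k$ an affine holomorphic function chosen so that $\phi_k(0)=0$ and $\nabla\phi_k(0)=0$. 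Since $i\,\partial\bar\partial\phi_k=i\,\partial\bar\partial\phi(z_k+\,\cdot\,)$ still obeys \eqref{eq:subharm_bounds}, the family $\{\phi_k\}$ is precompact in $C^{1,\alpha}_{\mathrm{loc}}$, so along a subsequence $\phi_k\to\phi_\infty$ locally uniformly with $\phi_\infty$ plurisubharmonic and satisfying \eqref{eq:subharm_bounds}, while $\Lambda-z_k\to\Lambda_\infty$ in the Chabauty--Fell topology. The map $f\mapsto e^{-h_k(\,\cdot\,)}f(z_k+\,\cdot\,)$ is an isometric isomorphism $A^2_\phi\to A^2_{\phi_k}$ preserving the weighted point evaluations, so $\Lambda-z_k$ is sampling for $A^2_{\phi_k}$ with the same constants $A,B$; using the uniform kernel bounds to control the frame operators and pass to the limit, $\Lambda_\infty$ is sampling for $A^2_{\phi_\infty}$ with constants $A,B$. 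The choice of $(z_k,r_k)$ forces $\Lambda_\infty$ to be critically dense uniformly: $\sup_z\big|\#(\Lambda_\infty\cap B_r(z))/N_{\phi_\infty}(B_r(z))-1\big|\to 0$ as $r\to\infty$, so $D^-_{\phi_\infty}(\Lambda_\infty)=D^+_{\phi_\infty}(\Lambda_\infty)=1$.

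It then suffices to prove that a Fock-type space $A^2_{\phi_\infty}$ with weight satisfying \eqref{eq:subharm_bounds} admits no sampling set of density $D^-=D^+=1$. Invoking the localized-frame machinery, a localized frame whose upper Beurling density attains the critical value $1$ has vanishing excess, hence is a Riesz basis; thus $\{\kappa_\lambda^\infty\}_{\lambda\in\Lambda_\infty}$ would be a Riesz basis of $A^2_{\phi_\infty}$, i.e.\ $\Lambda_\infty$ would be simultaneously sampling and interpolating. This is where the several complex variables input (estimates for $\bar\partial$) is indispensable, and where I expect the main obstacle to lie: one must rule out such a complete interpolating sequence at critical density. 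The mechanism should be a non-homogeneous analogue of Seip's rigidity for entire functions of order $2$ --- the Lagrange interpolators $g_\lambda$ would have to satisfy simultaneously an upper bound $|g_\lambda(z)|\le C e^{\phi_\infty(z)-\phi_\infty(\lambda)-c|z-\lambda|}$ and a matching lower bound $|g_\lambda(\lambda)|\,e^{-\phi_\infty(\lambda)}\ge c$, and a $\bar\partial$/Jensen-type estimate shows these are incompatible precisely when the density equals $1$ (one can afford one of the two bounds only with a bit of room, i.e.\ density strictly off $1$). This contradiction gives $D^-_\phi(\Lambda)>1$, and the parallel argument --- a Riesz sequence at critical upper density $1$ is a Riesz basis, then the same blow-up and the same rigidity --- gives $D^+_\phi(\Lambda)<1$ for interpolating sets. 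The two delicate points are thus (i) the persistence of the sampling/Riesz-sequence inequalities, with uniform constants and pinned limiting density, under the Chabauty--Fell limit in the absence of a transitive symmetry group, and (ii) the self-contained treatment of the critically dense model via $\bar\partial$.
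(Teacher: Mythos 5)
Your reduction to $p=2$ and your use of kernel localization to get the non-strict inequalities match the paper, but the mechanism you propose for upgrading $\geq 1$ to $>1$ has two genuine gaps, and they sit exactly where the difficulty of the theorem lies. First, the blow-up does not pin the density of the limit set. The hypothesis $D^-_\phi(\Lambda)=1$ controls only the \emph{lower} density: it gives balls in which the point count is not much larger than critical \emph{from below}, but a sampling set with $D^-_\phi(\Lambda)=1$ may perfectly well have $D^+_\phi(\Lambda)$ huge, and no choice of centers $z_k$ and radii $r_k$ forces the weak limit $\Lambda_\infty$ to satisfy $D^+_{\phi_\infty}(\Lambda_\infty)=1$. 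Second, even granting a critically dense limit, the step ``localized frame at density exactly $1$ has vanishing excess, hence is a Riesz basis'' is not a theorem of \cite{bacahela06} or \cite{bacala11}; those results go in the other direction (excess forces density above critical, redundancy permits deletion of positive-density subsets). And the final step --- ruling out a set that is simultaneously sampling and interpolating at critical density by a ``$\bar\partial$/Jensen rigidity'' for the Lagrange interpolators --- is precisely the content of the Corollary to Theorem \ref{th_intro}; you acknowledge you do not have an argument for it, and in dimension $n>1$ the one-variable route of \cite{ortega1998beurling} (which leans on \emph{sufficient} density conditions from \cite{berndtsson1995interpolation}) is unavailable.

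The paper avoids ever confronting the critical-density model. It proves a deformation result (Theorem \ref{th_lip_main}): sampling and interpolation persist under Lipschitz deformations $\Lambda_j\lipconv\Lambda$, with the dilations $\Lambda_j=(1+\tfrac1j)\Lambda$ as the relevant instance. Since dilation by $a>1$ strictly decreases both densities (Lemma \ref{lemma_xx}, using only \eqref{eq_diag}), one gets $D^-_\phi(\Lambda)>D^-_\phi(\Lambda_j)\geq 1$ directly from the non-strict Theorem \ref{th_non_strict}. The compactness apparatus you sketch (translated weights, $C^{1,\alpha}_{\mathrm{loc}}$ precompactness, convergence of kernels, persistence of sampling under weak limits) is indeed part of the paper's toolkit, but it is deployed inside the proof of the deformation theorem --- to show that a failure of sampling for all $\Lambda_j$ would produce a weak limit $(\Gamma,\psi)\in W(\Lambda,\phi)$ on which $A^\infty_\psi$ has a nontrivial function vanishing on $\Gamma$, contradicting the characterization of sampling sets --- not to reduce to a rigid critical model. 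To repair your outline you would essentially have to prove the deformation theorem (or an equivalent substitute for the missing $\bar\partial$ rigidity), so as it stands the proposal does not close.
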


\begin{cor}
  There does not exist a set $\Lambda \subseteq \Cn$ that is
  simultaneously sampling and interpolating for $A_\phi
  ^2$. Equivalently, there is no Riesz basis for $A_\phi ^2$ that consists of
  reproducing kernels.
\end{cor}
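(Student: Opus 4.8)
The plan is to deduce the corollary directly from Theorem~\ref{th_intro} together with the trivial monotonicity $D^{-}_\phi(\Lambda) \le D^{+}_\phi(\Lambda)$. Suppose, towards a contradiction, that some $\Lambda \subseteq \Cn$ is simultaneously sampling and interpolating for $A^2_\phi$. Then part~(a) of Theorem~\ref{th_intro} gives $D^{-}_\phi(\Lambda) > 1$ and part~(b) gives $D^{+}_\phi(\Lambda) < 1$. On the other hand, comparing the definitions \eqref{eq_bd} and \eqref{eq_bd2} one sees that $D^{-}_\phi(\Lambda) \le D^{+}_\phi(\Lambda)$, since a $\liminf$ of infima never exceeds the corresponding $\limsup$ of suprema. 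Chaining these three facts yields $1 < D^{-}_\phi(\Lambda) \le D^{+}_\phi(\Lambda) < 1$, which is absurd; hence no such $\Lambda$ exists.

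For the equivalent reformulation in terms of Riesz bases, I would invoke the standard dictionary between the sampling/interpolation properties of $\Lambda$ and the frame/Riesz-sequence properties of the normalized reproducing kernels $e_\lambda := K_{\phi,\lambda}/\norm{K_{\phi,\lambda}}_{\phi,2}$, $\lambda \in \Lambda$. Since $f(\lambda) = \langle f, K_{\phi,\lambda}\rangle$, and since the uniform two-sided estimate $K_\phi(\lambda,\lambda) \asymp e^{2\phi(\lambda)}$ (valid under \eqref{eq:subharm_bounds}, and already underlying the density results recalled in Section~\ref{sec_nonstr}) lets one replace the weight $e^{-2\phi(\lambda)}$ by the intrinsic normalization $\norm{K_{\phi,\lambda}}^{-2}_{\phi,2}$, the sampling inequalities for $A^2_\phi$ are equivalent to the frame inequalities $A'\norm{f}^2 \le \sum_{\lambda} |\langle f, e_\lambda\rangle|^2 \le B'\norm{f}^2$. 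Dually, $\Lambda$ is an interpolating set for $A^2_\phi$ precisely when the synthesis map $c \mapsto \sum_\lambda c_\lambda e_\lambda$ is bounded below, i.e.\ when $(e_\lambda)_{\lambda \in \Lambda}$ is a Riesz sequence in $A^2_\phi$. A family that is at once a frame and a Riesz sequence is exactly a Riesz basis; thus a Riesz basis of reproducing kernels for $A^2_\phi$ exists if and only if some $\Lambda$ is simultaneously sampling and interpolating, which the first paragraph has excluded.

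There is essentially no obstacle here: the corollary is a formal consequence of Theorem~\ref{th_intro}. The only point requiring a little care is the second formulation, namely passing between the extrinsic weight $e^{-2\phi(\lambda)}$ and the intrinsic normalization $\norm{K_{\phi,\lambda}}^{-1}_{\phi,2}$ of the kernels; this is routine given the pointwise kernel estimate, and I would state it as a remark rather than a separate argument.
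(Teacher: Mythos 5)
Your argument is correct and is precisely the intended one: the paper states this corollary without proof as an immediate consequence of Theorem~\ref{th_intro}, via the trivial inequality $D^-_\phi(\Lambda)\le D^+_\phi(\Lambda)$ and the standard equivalence (using \eqref{eq_diag} to pass between the weight $e^{-2\phi(\lambda)}$ and the kernel normalization) between sampling/interpolating sets and frames/Riesz sequences of normalized reproducing kernels. Nothing further is needed.
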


The strict density conditions of Theorem~\ref{th_intro} are
substantially different from the necessary, non-strict conditions
in~\eqref{eq:aaaa} and ~\eqref{eq:bbbb}. So far, strict density
conditions have been proved only in few situations, namely  (i) for
weighted Fock spaces $A^p_\phi $ \emph{in dimension $1$}
\cite{ortega1998beurling}, and (ii) for
Gabor frames, where the result is known as the Balian-Low
theorem. Although there is an extensive literature on the theorem for
Gabor frames over a lattice, strict density conditions for non-uniform
Gabor frames were shown only recently in~\cite{AFK14} (with
pseudodifferential operators) and \cite{grorro15}.

We note that the proofs in  \cite{ortega1998beurling} for weighted Fock spaces in one variable
rely on the sufficiency of density conditions for sampling and
interpolation \cite{berndtsson1995interpolation}, and these are not available
in higher dimension. We will adopt a different  approach that  combines the  strategies of
\cite{ortega1998beurling} and  of \cite{grorro15}.
To circumvent arguments that are specific to one-dimensional complex
analysis, we resort instead to techniques from \cite{grorro15} that
were introduced originally  to study the stability of Gabor frames
under quite general deformations. More
precisely, we consider the notion of \emph{Lipschitz convergence of
  sets}. Roughly, a sequence of sets $\Lambda_j\subseteq \Cn$ converges
Lipschitz-wise to $\Lambda \subseteq \Cn$, if
there exist maps $\tau _j: \Lambda \to \Cn$, such that $\Lambda _j =
\tau _j (\Lambda)$, $\tau_j(\lambda ) \to \lambda $ for all $\lambda
\in \Lambda $ and distances are preserved locally. See Section
\ref{sec_lip} for the precise, technical definition.
Our third contribution is the following deformation result for
sampling sets (interpolating sets) in weighted Fock spaces.

\begin{theorem}
\label{th_lip_main}
Let $\phi: \Cn \to \mathbb{R} $ be a plurisubharmonic function satisfying
\eqref{eq:subharm_bounds}, and $p \in [1,\infty]$. Assume that
$\Lambda _j$ is a sequence of sets that converge to $\Lambda
$ in Lipschitz-wise,  $\Lambda_j \lipconv \Lambda$.

(a) If $\Lambda$ is a sampling set for $A^p_\phi$,
 then  $\Lambda_j$ is also a sampling
set for $A^p_\phi$ for sufficiently large $j$.

(b) If $\Lambda$ is an interpolating set for $A^p_\phi$,  then  $\Lambda_j$ is also an
interpolating set for $A^p_\phi$ for sufficiently large $j$.
\end{theorem}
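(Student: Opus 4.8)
The plan is to derive the statement for all $p\in[1,\infty]$ from the Hilbert space case $p=2$, and to recast the case $p=2$ as a stability result, under Lipschitz deformation of the index set, for the frame (respectively Riesz sequence) of normalized reproducing kernels, following the strategy of \cite{grorro15}. Write $k_w:=e^{-\phi(w)}K_{\phi,w}\in A^2_\phi$, so that $\langle f,k_w\rangle=f(w)e^{-\phi(w)}$. By \eqref{eq:subharm_bounds} one has $\|k_w\|_{\phi,2}^2=e^{-2\phi(w)}K_\phi(w,w)\asymp 1$ and the off-diagonal decay $|\langle k_z,k_w\rangle|=|K_\phi(z,w)|e^{-\phi(z)-\phi(w)}\le Ce^{-c|z-w|}$, so $\{k_\lambda\}_{\lambda\in\Lambda}$ is an intrinsically localized system whenever $\Lambda$ is relatively separated. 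Hence, for a relatively separated $\Lambda$: $\Lambda$ is sampling for $A^2_\phi$ iff $\{k_\lambda\}_{\lambda\in\Lambda}$ is a frame; $\Lambda$ is interpolating for $A^2_\phi$ iff $\{k_\lambda\}_{\lambda\in\Lambda}$ is a Riesz sequence; and each of these two properties is independent of $p\in[1,\infty]$, a standard consequence of the off-diagonal decay of $K_\phi$. It therefore suffices to prove, for $p=2$: if $\{k_\lambda\}_{\lambda\in\Lambda}$ is a frame (respectively a Riesz sequence) in $A^2_\phi$, then so is $\{k_{\tau_j(\lambda)}\}_{\lambda\in\Lambda}$ for $j$ large.

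Lipschitz convergence forces the $\Lambda_j$ to be uniformly relatively separated with $\sup_j D^+_\phi(\Lambda_j)<\infty$; with the off-diagonal decay of $K_\phi$ this gives a Bessel bound (respectively an upper Riesz bound) for $\{k_{\tau_j(\lambda)}\}$ that is uniform in $j$. So only a uniform lower bound remains: in case (a), $\sum_{\lambda\in\Lambda}|f(\tau_j(\lambda))|^2e^{-2\phi(\tau_j(\lambda))}\ge A\|f\|_{\phi,2}^2$ for all $f\in A^2_\phi$ and all large $j$; in case (b), $\|\sum_{\lambda\in\Lambda}c_\lambda k_{\tau_j(\lambda)}\|_{\phi,2}\ge A\|c\|_{\ell^2}$ for all $c\in\ell^2(\Lambda)$ and all large $j$.

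I would obtain these lower bounds by contradiction and a compactness argument on limit configurations. Suppose (a) fails along a subsequence: there are $f_j\in A^2_\phi$ with $\|f_j\|_{\phi,2}=1$ and $\sum_\lambda|f_j(\tau_j(\lambda))|^2e^{-2\phi(\tau_j(\lambda))}\to 0$. Using the off-diagonal decay of $K_\phi$ one runs a localization (homogeneous-approximation-type) argument reducing to the case where, after recentering at points $z_j\in\Cn$, the $f_j$ carry most of their mass in a fixed ball. Replace $\phi$ by the normalized recentered weights $\phi_j:=\phi(\cdot+z_j)-\phi(z_j)$, which still obey \eqref{eq:subharm_bounds}; by the compactness of families of plurisubharmonic functions, a further subsequence has $\phi_j\to\phi_\infty$ locally uniformly with $\phi_\infty$ obeying \eqref{eq:subharm_bounds}, the recentered $f_j$ converge locally uniformly to $f_\infty\in A^2_{\phi_\infty}$ with $\|f_\infty\|_{\phi_\infty,2}=1$, the kernels $K_{\phi_j}$ converge to $K_{\phi_\infty}$ with a uniform off-diagonal bound, and $\Lambda_j-z_j$ converges weakly to a relatively separated set $\Lambda_\infty$. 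Now by translation covariance each recentered copy $\Lambda-z_j$ samples $A^2_{\phi_j}$ with the \emph{same} constants as $\Lambda$ for $A^2_\phi$, and sampling bounds pass to the above limits (this uses the uniform kernel estimates), so the weak limit of $\Lambda-z_j$ is sampling for $A^2_{\phi_\infty}$; moreover the definition of Lipschitz convergence (Section~\ref{sec_lip}) is designed precisely so that, after recentering, the deformation becomes asymptotically trivial — $\Lambda_j$ and $\Lambda$ have the same local limit configurations around corresponding points — so $\Lambda_\infty$ equals that weak limit. Hence $\Lambda_\infty$ is a sampling set for $A^2_{\phi_\infty}$, forcing $\sum_{\mu\in\Lambda_\infty}|f_\infty(\mu)|^2e^{-2\phi_\infty(\mu)}=0$ and contradicting $\|f_\infty\|_{\phi_\infty,2}=1$. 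Part (b) is handled by the dual version of the same scheme, starting from $c^{(j)}\in\ell^2(\Lambda)$ with $\|c^{(j)}\|_{\ell^2}=1$ and $\|\sum_\lambda c^{(j)}_\lambda k_{\tau_j(\lambda)}\|_{\phi,2}\to 0$, recentering at the bulk of $|c^{(j)}|^2$, and using that the Riesz-sequence property survives the limit.

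The main obstacle is this compactness step. Because Lipschitz convergence deliberately permits unbounded deformations (dilations being the model case), the maps $\tau_j$ are not uniformly close to the identity, so $\{k_{\tau_j(\lambda)}\}$ cannot be treated as a small Schur-norm perturbation of $\{k_\lambda\}$ and the limit-configuration reasoning is essential. Making it rigorous rests on two robustness properties requiring care in the present generality: that the class of weights satisfying \eqref{eq:subharm_bounds} is closed under recentering and local limits, with the corresponding reproducing kernels converging and retaining uniform off-diagonal decay; and that the off-diagonal decay localizes the near-counterexamples $f_j$ (resp.\ $c^{(j)}$). Granting these, the persistence of the sampling and Riesz-sequence properties under weak limits of the index set is comparatively routine.
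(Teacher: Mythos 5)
Your overall architecture is the paper's: argue by contradiction, recenter the near-counterexamples, pass to limits of the translated weights and kernels (the paper's Proposition~\ref{prop:trans_lim}), identify the weak limit of the recentered $\Lambda_j$ with a weak limit of translates of $\Lambda$ itself via the Lipschitz hypothesis (Lemma~\ref{lem:weaklim_lip}), and invoke the stability of sampling/interpolation under such simultaneous weak limits of sets and weights (Propositions~\ref{prop:sampling_char} and~\ref{prop:weaklimit_interp}, Theorem~\ref{th_ch_samp}). Two of the steps you label ``comparatively routine'' are in fact the substance of Section~\ref{sec_char}: the persistence of sampling under weak limits requires the H\"ormander $\bar\partial$-correction argument, not merely uniform kernel estimates, and the persistence of the Riesz-sequence/interpolation property is the content of Lemma~\ref{lem:continuity}. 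But these are correct and you flag them, so the real issue is elsewhere.

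The genuine gap is your choice to reduce to $p=2$ and then claim that a localization ``homogeneous-approximation-type'' argument lets you recenter so that the $f_j$ (resp.\ the bulk of $|c^{(j)}|^2$) concentrate in a fixed ball. For the failure of a \emph{lower} frame bound this is false in general: with $\|f_j\|_{\phi,2}=1$ the mass of $f_j$ may spread over $N_j\to\infty$ far-separated regions, each carrying mass $o(1)$ (think of normalized sums of many distant reproducing kernels), so that every recentered subsequential limit is $f_\infty=0$ and the contradiction evaporates; the same happens for $\ell^2$-normalized coefficient sequences in part (b). Off-diagonal decay of the kernel does not rule this out. The cure is exactly the universality theorem you cite as ``standard'' at the outset, but applied in the opposite direction: reduce to $p=\infty$ for sampling and to $\ell^\infty$-normalized coefficients (equivalently $p=1$ interpolation, by duality) rather than to $p=2$. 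Then $\|f_j\|_{\phi,\infty}=1$ hands you points $\zeta_j$ with $|f_j(\zeta_j)|e^{-\phi(\zeta_j)}\ge 1/2$, and $\|c^{(j)}\|_\infty=1$ hands you indices $\lambda_j$ with $|c^{(j)}_{\lambda_j}|\ge 1/2$, so the recentered limits are automatically nonzero and the compactness argument closes. This is precisely how the paper proceeds (and why its Theorem~\ref{thm_wiener_samp}, proved via the Wiener-type Lemma~\ref{th_wiener_sub}, is stated for all $p\in[1,\infty]$ and not just to transfer the final conclusion between exponents). With that substitution your proof coincides with the paper's; without it, the concentration step has no justification.
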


 Theorem~\ref{th_intro} then  follows from the
deformation stability by choosing a sequence of dilated sets $\Lambda
_j = (1+\tfrac{1}{j})\Lambda $. If $\Lambda $ is a sampling set for
$A^p_\phi $, then so is $\Lambda _j$ for large $j$. Then $D^-_\phi
(\Lambda ) >D^- _\phi (\Lambda _j )$ and by the
necessary density condition \eqref{eq:aaaa} we obtain  $D^-_\phi
(\Lambda ) >D^- _\phi (\Lambda _j )\geq 1$. See Section \ref{sec_th_intro}
for details.

For the proof of the main results we will enrich the outline of
\cite{ortega1998beurling} and \cite{grorro15} by several new aspects:

(i) \emph{Universality.} $\Lambda $ is a sampling set (interpolating set) for
$A^p_\phi $ for \emph{some} $p\in [1,\infty ]$, if and only if $\Lambda $ is a sampling set (interpolating set) for
$A^p_\phi $ for \emph{all} $p\in [1,\infty ]$. The technical novelty
is a Wiener-type lemma for infinite  matrices with off-diagonal decay
that are left-invertible on a subspace (Lemma~\ref{th_wiener_sub}). This clarifies some subtleties
in~\cite{albakr08} and \cite{grorro15}, and enhances their applicability (in \cite{grorro15} we used
so-called Wilson bases to reach similar conclusions).

(ii) \emph{Weak limits} play an important role in sampling theory and
in deformation results. The main obstacle in weighted Fock spaces
 is their lack of translation invariance. This was circumvented in \cite{ortega1998beurling}
by noting that, while a single  space $A^p_\phi$ may not be translation invariant,
the union of all $A^p_\phi $ is translation invariant. This insight
was leveraged by developing abstract translation operators
that map a weighted Fock space into another weighted Fock space. The
extension of these ideas to several complex variables requires
considerable technicalities (Section~4). In particular, we  will show
that the map $\phi \mapsto K_\phi $ (for the reproducing kernel of
$A^2_\phi $) obeys some continuity property (Proposition~\ref{prop:trans_lim}).

(iii) The \emph{theory of localized frames in reproducing kernel Hilbert
  spaces} enters several times in the proof of the universality of
sampling sets and of Theorem~\ref{th_lip_main}. These  arguments  do not rely on
analyticity and may be of independent interest for further
applicability. In particular, Theorem~\ref{th_fekete} contains an abstract
version of the construction of Fekete points in reproducing kernel
Hilbert spaces.

Finally we comment on a question raised in  \cite{ortega1998beurling}
on the  difference between Paley-Wiener and Fock spaces. Whereas the
necessary density condition in weighted Fock spaces is strict, it is
not so in the Paley-Wiener space, and consequently, Paley-Wiener space
admits sequences that are both sampling and interpolating. We believe
that the difference lies  in the  off-diagonal decay of reproducing
kernels.
Whereas the reproducing kernel of
Paley-Wiener space is not even in $L^1$, the adjusted reproducing kernel of
weighted Fock space decays exponentially. In the end, this difference
may contribute to the different behavior of the two spaces.

The article is organized as follows. In Section \ref{sec_otation} we collect some facts about Fock spaces and
$\bar \partial$-equations, while in Section \ref{sec_tools} we introduce the key definitions and tools. The abstract
translation operators are introduced in Section \ref{sec_ops}. These tools are used to characterize sampling
and interpolating sets in Section \ref{sec_char}. Theorem \ref{th_intro} is derived in Section \ref{th_stab}.
For clarity, the more general arguments that are applicable to
abstract reproducing kernel Hilbert spaces  with a certain
off-diagonal decay are postponed to Section \ref{sec_loc}.

\section{Preliminaries}
\label{sec_otation}
\subsection{Notation}
We are mainly interested in functions of $n$ complex variables, but we develop some
auxiliary results on the Euclidean spaces $\Rdst$. Of course, when we apply these to $\Cn$, we
let $d=2n$.

A set $\Lambda \subseteq \Rdst$ is called \emph{relatively separated} if
\begin{align}
\label{eq_rel}
\rel(\Lambda) := \sup \{ \#(\Lambda \cap B_1(x)) : x \in \Rdst \} < \infty,
\end{align}
and it is called \emph{separated} if
\begin{align}
\label{eq_sep}
\sep(\Lambda) := \inf \sett{\abs{\lambda-\lambda'}: \lambda \not = \lambda' \in \Lambda} > 0.
\end{align}
Separated sets are relatively separated, and relatively separated sets are finite unions of
separated sets.
A set $\Lambda$ is called \emph{relatively dense} if there exists $R>0$ such that
$\Rdst = \bigcup_{\lambda \in \Lambda} B_R(\lambda)$.

\subsection{The reproducing kernel}
\label{sec_off}
Recall that we denote the reproducing kernel of $A_\phi^2$ by $K_{\phi}$ and write
$K_{\phi,
z}(w)= K_{\phi}(w,z)$. The diagonal of the reproducing kernel
satisfies
\begin{align}
\label{eq_diag}
0<c \leq K_\phi(z,z)e^{-2\phi(z)} \leq C <\infty,
\end{align}
for some constants $c,C$ that only depend on the constants in \eqref{eq:subharm_bounds};
see e.g. \cite[Proposition 2.5]{schva12}. In addition, the reproducing
kernel $K_\phi $ satisfies the following
off-diagonal decay estimate  \cite{delin1998pointwise}:
\begin{equation} \label{eq:offdiag-bound}
|K_\phi(z, w)|e^{-\phi(z)-\phi(w)} \leq C e^{-c|z-w|},
\end{equation}
for all $z, w \in \Cn$ and some constants $c, C > 0$ which only depend on the bounds in
\eqref{eq:subharm_bounds}. See \cite{dall2015pointwise} for more
general conditions
for off-diagonal decay.

\subsection{Non-strict density conditions}
\label{sec_nonstr}
The following statement offers a small extension of Lindholm's density
theorem~\cite{lindholm2001sampling}.
\begin{theorem}
\label{th_non_strict}
Let $\phi: \Cn \to \mathbb{R} $ be a plurisubharmonic function satisfying
\eqref{eq:subharm_bounds}.
\begin{itemize}
\item[(a)] If $\Lambda$ is a sampling set for $A^2_\phi$, then
$D_\phi^{-}(\Lambda) \geq 1$.
\item[(b)] If $\Lambda$ is an interpolating set for $A^2_\phi$,
then $D_\phi^{+}(\Lambda) \leq 1$.
\end{itemize}
\end{theorem}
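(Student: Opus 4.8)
The plan is to derive Theorem~\ref{th_non_strict} from the abstract density theory for reproducing kernel Hilbert spaces (RKHS) with off-diagonal decay developed in \cite{bacahela06, fuhr2016density}, by recognizing $A^2_\phi$ as a space to which that machinery applies. The key structural facts already collected are the two-sided bound \eqref{eq_diag} on the diagonal of $K_\phi$ and the exponential off-diagonal decay \eqref{eq:offdiag-bound}. First I would normalize: replace $K_{\phi,w}$ by the unit-norm kernels $\kappa_w := K_{\phi,w}/\sqrt{K_\phi(w,w)}$, and pass from holomorphic functions $f$ to the densely defined isometry-like correspondence $f \mapsto f e^{-\phi}$ onto a closed subspace $\cH$ of $L^2(\Cn)$. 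Under this transfer, a sampling inequality for $A^2_\phi$ becomes the statement that $\{\kappa_\lambda : \lambda \in \Lambda\}$ is a frame for $\cH$ (after the $e^{-\phi(\lambda)}$ weights are absorbed into the normalization), and an interpolating set becomes a Riesz sequence of normalized kernels. The estimate \eqref{eq:offdiag-bound} then says exactly that these kernels form a \emph{self-localized} family with respect to the natural metric on $\Cn$, which is the hypothesis needed in \cite{fuhr2016density}.

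The second step is to match the abstract Beurling-type densities of \cite{bacahela06, fuhr2016density} with the weighted densities $D^\pm_\phi$ defined in \eqref{eq_bd}--\eqref{eq_bd2}. The abstract density counts points of $\Lambda$ in a ball $B_r(z)$ and compares with a ``volume'' given by integrating the trace of the reproducing kernel, i.e. $\int_{B_r(z)} K_\phi(w,w) e^{-2\phi(w)}\,dm(w)$ — which is precisely the denominator in \eqref{eq_bd}. So, unlike the situation with $\widetilde D^\pm_\phi$ (where one must invoke \cite{ortega1998beurling, berndtsson1995interpolation} to compare with $\int (i\partial\bar\partial\phi)^n$), here the identification is essentially definitional; one only needs to check that the comparison function $w \mapsto K_\phi(w,w)e^{-2\phi(w)}$ is bounded above and below by \eqref{eq_diag}, so that the abstract density is comparable to the counting-measure density and, after the correct normalization of the reference family, equals $1$ at the critical threshold. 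Then the general theorem — a frame of localized vectors in an RKHS has lower Beurling density at least $1$, and a Riesz sequence has upper density at most $1$ — yields (a) and (b) respectively.

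The main obstacle I anticipate is bookkeeping rather than conceptual: verifying that the hypotheses of the abstract density theorem are met \emph{with the normalization that makes the critical density exactly $1$}, and not merely a positive constant. Concretely, \cite{fuhr2016density} phrases density relative to a fixed ``reference'' frame or relative to a measure built from the kernel, and one must confirm that the weighted Fock kernel, with the bounds \eqref{eq_diag} and \eqref{eq:offdiag-bound}, plays the role of such a reference and that the resulting homogeneous-approximation-property (HAP) and comparison arguments go through uniformly in $z$ and $r$ (the $\sup_z$ and $\inf_z$ in \eqref{eq_bd}--\eqref{eq_bd2} require uniformity). A minor additional point is that Theorem~\ref{th_non_strict} is stated only for $p=2$, so no interpolation-in-$p$ or universality argument is needed here; the $p\neq 2$ cases are handled later and separately. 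I would therefore structure the proof as: (1) transfer $A^2_\phi$ to a subspace of $L^2$ and reinterpret sampling/interpolating sets as frame/Riesz properties of normalized kernels; (2) invoke \eqref{eq_diag}, \eqref{eq:offdiag-bound} to check self-localization and the HAP; (3) cite \cite{bacahela06, fuhr2016density} for the density inequalities and unwind the normalization to conclude $D^-_\phi(\Lambda)\geq 1$ and $D^+_\phi(\Lambda)\leq 1$.
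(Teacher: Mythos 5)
Your proposal is correct and follows essentially the same route as the paper: the paper's proof likewise passes to the reweighted space $V^2=\{e^{-\phi}f : f\in A^2_\phi\}\subseteq L^2(\Cn)$, verifies the hypotheses of the abstract density theorem of \cite{fuhr2016density} (Corollary 4.1 there) using exactly \eqref{eq_diag} and \eqref{eq:offdiag-bound}, and notes that the densities \eqref{eq_bd}--\eqref{eq_bd2} are built from the kernel trace precisely so that the identification with the abstract density is immediate. Your additional bookkeeping remarks (normalized kernels, frame/Riesz reformulation, uniformity in $z$ and $r$) are consistent with how that abstract result is set up and do not change the argument.
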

\begin{proof}
We apply the abstract density result in \cite[Corollary 4.1]{fuhr2016density}
to the metric space $\Cn=\Rst^{2d}$, $d\mu(z) = dm(z)$, and the reproducing kernel Hilbert
space
\begin{align*}
V^2 := \sett{ e^{-\phi} f: f \in A^2_\phi }.
\end{align*}
The density theorem  \cite[Corollary 4.1]{fuhr2016density} requires  certain assumptions on
the metric and measure, which are indeed satisfied by the Euclidean
space and Lebesgue measure, and assumptions on
the reproducing kernel
of $V^2$ (behavior of the diagonal and off-diagonal decay), which is
$K_\phi(z, w)e^{-\phi(z)-\phi(w)}$. The required conditions on the
reproducing kernel are easily seen to hold
due to \eqref{eq_diag} and \eqref{eq:offdiag-bound}.
\end{proof}
\begin{rem} \rm
For $2$-homogeneous weights $\phi$, Theorem \ref{th_non_strict} is essentially
due to Lindholm \cite{lindholm2001sampling}. To be precise, the
density condition in \eqref{eq_bd} and \eqref{eq_bd2} and those of Lindholm are
formally different, but they were shown  to coincide for
$2$-homogeneous weights by \cite[Section 5.4]{fuhr2016density}. The
generalization in Theorem~\ref{th_non_strict} may be taken as a hint
that the new notion of density  is perfectly  appropriate for weighted
Fock spaces.
\end{rem}

By \eqref{eq_diag}, the unweighted Beurling density $D^{\pm}(\Lambda )$ is
comparable to the weighted density $D_\phi ^{\pm}(\Lambda )$, namely
$cD^{\pm} (\Lambda ) \leq D^{\pm}_\phi (\Lambda ) \leq C D^{\pm}
(\Lambda )$. Since a set of positive lower Beurling density is
relatively dense, we obtain the following corollary.
\begin{cor}
\label{cor_rd}
Assume that $\phi$ satisfies \eqref{eq:subharm_bounds}. Then
every sampling set for $A^2_\phi$ is relatively dense.
\end{cor}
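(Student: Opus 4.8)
The final statement is Corollary~\ref{cor_rd}: every sampling set for $A^2_\phi$ is relatively dense.

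\emph{Proof proposal.} The plan is to derive relative density from the necessary lower density bound of Theorem~\ref{th_non_strict}(a), passing through the comparison between the weighted Beurling density $D^-_\phi$ and the unweighted one, and then to invoke the elementary fact that a set of positive lower Beurling density is relatively dense.

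First, suppose $\Lambda$ is a sampling set for $A^2_\phi$. By Theorem~\ref{th_non_strict}(a) we have $D^-_\phi(\Lambda)\geq 1$. The diagonal estimate \eqref{eq_diag} gives $c\,\abs{B_r(z)}\leq \int_{B_r(z)} K(w,w)e^{-2\phi(w)}\,dm(w)\leq C\,\abs{B_r(z)}$ uniformly in $z\in\Cn$ and $r>0$, so comparing the denominators in \eqref{eq_bd2} and in the definition of the unweighted lower Beurling density $D^-(\Lambda)$ (obtained by replacing that denominator with $\abs{B_r(z)}$) yields $D^-_\phi(\Lambda)\leq C'\,D^-(\Lambda)$ for a constant $C'$ depending only on the bounds in \eqref{eq:subharm_bounds}. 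Hence $D^-(\Lambda)\geq 1/C'>0$.

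It then remains to observe that any $\Lambda\subseteq\Cn$ with $D^-(\Lambda)>0$ is relatively dense. Indeed, writing $g(r):=\inf_{z\in\Cn}\#(\Lambda\cap B_r(z))/\abs{B_r(z)}$, the assumption $\liminf_{r\to\infty}g(r)=D^-(\Lambda)>0$ provides some $r_0>0$ with $g(r_0)\geq \tfrac12 D^-(\Lambda)>0$; thus $\#(\Lambda\cap B_{r_0}(z))\geq \tfrac12 D^-(\Lambda)\,\abs{B_{r_0}(z)}>0$ for every $z$, so $\Lambda\cap B_{r_0}(z)\neq\emptyset$. Consequently, for every $z\in\Cn$ there is $\lambda\in\Lambda$ with $\abs{z-\lambda}<r_0$, i.e.\ $z\in B_{r_0}(\lambda)$, which shows $\Cn=\bigcup_{\lambda\in\Lambda}B_{r_0}(\lambda)$ — precisely relative density with constant $R=r_0$.

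There is no genuine difficulty here: the argument is a direct unwinding of definitions once Theorem~\ref{th_non_strict}(a) is in hand. The only point demanding a little care is the first step, the transition from the density in \eqref{eq_bd2} (normalized by the reproducing kernel) to the unweighted Beurling density, but this is immediate from the two-sided bound \eqref{eq_diag}.
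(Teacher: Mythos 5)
Your proof is correct and follows exactly the paper's route: Theorem~\ref{th_non_strict}(a) gives $D^-_\phi(\Lambda)\geq 1$, the two-sided diagonal bound \eqref{eq_diag} shows the weighted and unweighted lower Beurling densities are comparable, and a set of positive lower Beurling density is relatively dense. The paper states this in two sentences before the corollary; you merely spell out the last elementary step in more detail.
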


\subsection{The $\partial \bar \partial$ equation}
\begin{lemma} \label{lem:poincare-lelong}
Let $\theta = \sum_{1 \leq j,k \leq n} \theta_{jk} dz_j \wedge d\bar z_k$ be a positive,
$d$-closed $(1,1)$-current
satisfying $\theta \leq Mi \partial \bar \partial |z|^2$.
Then there exists $u:\Cn \to \bC$
solving the equation $i \partial \bar \partial u = \theta$,
and such that
\begin{equation}
|u(z)| \leq C M (1+|z|)^2 \log (1+ |z|),
\end{equation}
where the constant $C$ depends only on the dimension $n$.
\end{lemma}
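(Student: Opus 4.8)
The plan is to reduce the assertion, via the $\partial\bar\partial$-lemma on $\Cn$, to quantitative solvability of $\bar\partial$ with polynomial growth control, and then to track the growth carefully through the construction. Two harmless preliminary reductions: dividing $\theta$ by $M$ we may assume $M=1$; and replacing $\theta$ by its mollification $\theta\ast\rho_\varepsilon$ — which is smooth, positive, $d$-closed and still dominated by $i\partial\bar\partial|z|^2$ — and proving the bound with a constant independent of $\varepsilon$, we may assume $\theta$ smooth. Note that a solution $u$ is determined only up to a pluriharmonic function, a freedom to be used for normalization.

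The first real step is to peel off the trace. Let $\mu$ be the trace measure of $\theta$ (up to a dimensional constant, $\mu=c_n\,\mathrm{tr}(\theta)\,dm$); by hypothesis $\mu$ is a positive measure of density $\le c_n$. Let $V=E\ast\mu$ be the Newtonian potential, with $E$ the fundamental solution of the Laplacian on $\mathbb{R}^{2n}$, corrected \emph{outside the unit ball} by the degree-$\le 2$ Taylor polynomial of $E(\cdot-\zeta)$ at the origin so that the integral converges. Splitting the integral over $\{|\zeta|<2|z|\}$ and its complement and using the density bound on $\mu$ gives $\Delta V=\mu$ and $|V(z)|\le C_n(1+|z|)^2\log(2+|z|)$, the logarithm being forced by the borderline contribution of the quadratic correction term. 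Then $\Theta:=\theta-i\partial\bar\partial V$ is a $d$-closed $(1,1)$-current of vanishing trace whose coefficients — being $\theta_{jk}$ minus Calder\'on--Zygmund transforms of the bounded functions $\mathrm{tr}(\theta)$ — lie in $\mathrm{BMO}$ with norm $\le C_n$.

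It remains to solve $i\partial\bar\partial W=\Theta$ with $|W(z)|\le C_n(1+|z|)^2\log(2+|z|)$ and set $u=V+\mathrm{Re}\,W$. Since $\Theta$ is $d$-closed, $\bar\partial\Theta=\partial\Theta=0$; solving component by component one first obtains $(1,0)$-forms $\omega_k$ with $\partial\omega_k=\sum_j\Theta_{jk}\,dz_j$ (a scalar problem, solvable outright), then corrects them by holomorphic data so that $\omega:=\sum_k\omega_k\,d\bar z_k$ becomes $\bar\partial$-closed — the obstruction is a holomorphic form, killed by the holomorphic Poincar\'e lemma on $\Cn$ — and finally solves $\bar\partial W=\omega$. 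Each step is carried out with the quantitative solution operators for $\bar\partial$ on $\Cn$ (explicit integral kernels with Taylor-polynomial corrections, or H\"ormander's weighted $L^2$-estimate with a weight $a\log(1+|z|^2)$, followed by interior elliptic regularity to pass from $L^2$ to pointwise bounds). Using that $\Theta$ is not merely bounded but lies in $\mathrm{BMO}$, one checks that the intermediate holomorphic obstructions are of low degree and that each integration costs one power of $(1+|z|)$ and at most one further logarithm; after subtracting its affine Taylor polynomial at the origin, $W$ has $\mathrm{BMO}$ complex Hessian and hence Zygmund-type growth $|W(z)|\le C_n(1+|z|)^2\log(2+|z|)$, which finishes the argument.

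The main obstacle is precisely this last bookkeeping of powers and logarithms. A crude application of the two-step $\bar\partial$-procedure, treating the intermediate data as merely polynomially bounded, yields $(1+|z|)^3$ rather than $(1+|z|)^2$, because the holomorphic primitive needed to kill the obstruction between the two solves is a quadratic polynomial which re-enters the final $\bar\partial$-equation; obtaining the sharp power requires exploiting the logarithmic ($\mathrm{BMO}$/Zygmund) regularity of the data at every stage, so that the obstructions drop a degree. A secondary technicality is the passage from $L^2$- or kernel-estimates to pointwise bounds and the verification that all constants depend only on $n$ and survive the regularization limit $\varepsilon\to0$.
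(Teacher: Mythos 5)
Your overall strategy (reduce to a quantitative $\partial\bar\partial$-lemma on $\Cn$) is reasonable, but the proof has a genuine gap exactly where you flag the ``main obstacle'': you never actually establish the bound $|W(z)|\le C_n(1+|z|)^2\log(2+|z|)$ for the solution of $i\partial\bar\partial W=\Theta$. Your own accounting shows that the two-stage procedure (solve $\partial$ componentwise, correct by holomorphic data, then solve $\bar\partial$) naively produces $(1+|z|)^3$, and the proposed repair --- that the BMO/Zygmund regularity of $\Theta$ makes ``the obstructions drop a degree'' --- is asserted, not proved. BMO control of the coefficients does not by itself give pointwise bounds, the growth of the holomorphic obstruction form is not estimated, and the claim that ``each integration costs one power and at most one further logarithm'' is precisely the statement in dispute. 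Since this is the entire content of the lemma, the argument as written is incomplete. The preliminary trace-splitting via the Newtonian potential is correct but also unnecessary: it converts bounded data into BMO data and thereby creates the very difficulty you then cannot resolve.

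For comparison, the paper's proof avoids this issue entirely by a different ordering of the two integrations. First it applies the radial homotopy formula of the Poincar\'e lemma directly to the bounded form $\theta$: $v_{0,1}(z)=\sum_k\bigl(\int_0^1\theta_{j,k}(tz)\,t z_j\,dt\bigr)d\bar z_k$, $v_{1,0}=-\overline{v_{0,1}}$. Because $\theta$ is bounded (by positivity and $\theta\le Mi\partial\bar\partial|z|^2$) and the kernel contributes a single factor $z_j$, this primitive satisfies $|v|\le CM(1+|z|)$, and bidegree reasons give $\bar\partial v_{0,1}=0$ automatically, so no holomorphic correction is ever needed. The second step then solves the single equation $i\bar\partial w=v_{0,1}$ by the weighted integral formula of Berndtsson--Andersson (Theorem~9$'$ of \cite{berndtsson1982henkin}), which for $\bar\partial$-closed data of linear growth yields exactly $|w(z)|\le CM(1+|z|)^2\log(1+|z|)$; setting $u=2\,\mathrm{Re}\,w$ finishes. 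If you want to salvage your route, you would need to supply a quantitative substitute for that citation --- i.e., a $\bar\partial$-solution operator with the stated growth for your $\omega$ --- together with an actual bound on the holomorphic obstruction; at present both are missing.
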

\begin{proof}
The solution is found in two stages. In the first step, we let $d= \partial + \bar \partial$
and solve $dv=
\theta$. The solution $v$ is as in Poincar\'e's lemma: $v= v_{0,1}+ v_{1,0}$ where
$$
v_{0,1}(z)= \sum_{1 \leq j,k \leq n} \bigg( \int_{t=0}^1 \theta_{j,k}(tz) t z_j dt \bigg) d
\bar{z}_k, \quad v_{1,0}= -\overline{v_{0,1}}.
$$
We have $\bar \partial v_{0,1}=\partial v_{1,0}=0$ and  check easily  that $dv= \theta$.
Furthermore, $v$ satisfies the estimate $v(z) \leq CM(1+ |z|)$, where $C$ depends
 only on the dimension of the space.

In the second step, we solve the equation $i \bar \partial w = v_{0,1}$. By
\cite[Theorem 9']{berndtsson1982henkin}, there exists a solution to this equation, given by an
explicit integral formula, satisfying
$$
| w(z)| \leq C M (1+|z|)^2 \log (1+ |z|).
$$
Now, it is readily checked $u:= 2 \mathrm{Re}\,  w $ solves the equation $i \partial \bar \partial
u
= \theta$
and also satisfies the desired growth estimate.
\end{proof}
\begin{rem} \rm
It follows from standard regularity theory for the Poisson equation that the solution $u$ in Lemma \ref{lem:poincare-lelong}
has derivatives of order 1 that are locally $\alpha$-H\"older
continuous, i.e.,  $u\in
C^{1,\alpha }$  for every  $\alpha \in (0,1)$.
\end{rem}
\subsection{Size control}
\label{sec_assumptions}
In some of the results we introduce the following extra size assumption on $\phi$:
\begin{align}\label{eq_a1}
|\phi(z)| \leq C (1+|z|)^2 \log (1+ |z|), \qquad z \in \Cn.
\end{align}
As explained below, this extra condition can always be achieved without changing the class of sampling or interpolating sets.
\begin{prop}
\label{prop_a1}
Let $\phi: \Cn \to \mathbb{R} $ be a plurisubharmonic function satisfying
\eqref{eq:subharm_bounds}. Then there exists
a plurisubharmonic function $\tilde\phi: \Cn \to \mathbb{R}$
satisfying \eqref{eq:subharm_bounds} and \eqref{eq_a1},
and with the following property: a set $\Lambda \subseteq \Cn$ is a sampling (resp. interpolating) set for
$A^p_\phi$ and some $p \in [1,\infty]$
if and only if $\Lambda$ is a sampling (resp. interpolating) set for
$A^p_{\tilde\phi}$. Furthermore, the density remains invariant under
this change, $D_\phi ^\pm (\Lambda ) = D_{\tilde{\phi}} ^\pm (\Lambda
)$.
\end{prop}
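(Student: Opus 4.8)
The plan is to replace $\phi$ by a modified weight $\tilde\phi:=\phi-\mathrm{Re}\,H$ for a suitable entire function $H$. Such a change leaves the theory essentially untouched, because it is implemented by multiplication with the zero-free entire function $e^{-H}$, which is an isometry between the relevant weighted $L^p$ spaces and which preserves the density integrand.

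First I would apply Lemma~\ref{lem:poincare-lelong} to the current $\theta:=i\partial\bar\partial\phi$. By \eqref{eq:subharm_bounds} this is a positive $(1,1)$-current with $\theta\le M\,i\partial\bar\partial|z|^2$, and it is automatically $d$-closed, so the lemma yields a function $u$ with $i\partial\bar\partial u=i\partial\bar\partial\phi$ and $|u(z)|\le CM(1+|z|)^2\log(1+|z|)$; inspecting its proof, where $u=2\,\mathrm{Re}\,w$, one sees that $u$ may be taken real-valued without affecting the estimate. Since $i\partial\bar\partial(\phi-u)=0$, the function $\phi-u$ is pluriharmonic on $\Cn$, and as $\Cn$ is simply connected it is the real part of an entire function, $\phi-u=\mathrm{Re}\,H$. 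I then set $\tilde\phi:=u$. By construction $i\partial\bar\partial\tilde\phi=i\partial\bar\partial\phi$, so $\tilde\phi$ is plurisubharmonic and satisfies \eqref{eq:subharm_bounds} with the same constants $m,M$, while the growth bound on $u$ is precisely \eqref{eq_a1}.

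Next I would transfer sampling, interpolation and density. Since $\mathrm{Re}\,H=\phi-\tilde\phi$, for every $p\in[1,\infty]$ the map $f\mapsto e^{-H}f$ is an isometric bijection from $A^p_\phi$ onto $A^p_{\tilde\phi}$, with inverse $f\mapsto e^{H}f$, and $|e^{-H(\lambda)}f(\lambda)|^pe^{-p\tilde\phi(\lambda)}=|f(\lambda)|^pe^{-p\phi(\lambda)}$ at every point $\lambda$; together with the analogous isometry $a\mapsto(e^{-H(\lambda)}a_\lambda)_\lambda$ of the coefficient spaces, this shows that $\Lambda$ is a sampling (resp.\ interpolating) set for $A^p_\phi$ if and only if it is one for $A^p_{\tilde\phi}$, with the same stability constants. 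Finally, because this unitary acts by multiplication, the reproducing kernels are related by $K_{\tilde\phi}(w,z)=e^{-H(w)}\overline{e^{-H(z)}}K_\phi(w,z)$, so that $K_{\tilde\phi}(w,w)e^{-2\tilde\phi(w)}=K_\phi(w,w)e^{-2\phi(w)}$ for every $w$; inserting this identity in \eqref{eq_bd} and \eqref{eq_bd2} gives $D^\pm_\phi(\Lambda)=D^\pm_{\tilde\phi}(\Lambda)$.

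The substance is entirely contained in Lemma~\ref{lem:poincare-lelong}, which is already available. The only points requiring a modicum of care—that the solution $u$ can be chosen real without losing the growth estimate, and that a pluriharmonic function on the simply connected $\Cn$ is globally the real part of a holomorphic function—are classical, so I do not expect a genuine obstacle here.
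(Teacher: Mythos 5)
Your proposal is correct and follows essentially the same route as the paper: apply Lemma~\ref{lem:poincare-lelong} to $i\partial\bar\partial\phi$ to get $\tilde\phi$ with the same complex Hessian and the growth bound \eqref{eq_a1}, write the pluriharmonic difference as the real part of an entire function, and use multiplication by the corresponding zero-free exponential as an isometry that preserves sampling/interpolating sets and the density integrand $K(w,w)e^{-2\phi(w)}$. The only difference is cosmetic (your $H$ is the paper's $-G$), and your added remarks about $u$ being real-valued and $\Cn$ being simply connected are correct fillings of details the paper leaves implicit.
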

\begin{proof}
Lemma \ref{lem:poincare-lelong} provides a function $\tilde\phi$ satisfying \eqref{eq_a1},
and such that $\partial \bar \partial \tilde \phi = \partial \bar \partial \phi $. This
implies that there exists an entire function $G$ such that
$\mathrm{Re}\, G = \tilde\phi-
\phi$, and therefore,
$$
e^{-\phi(z)} = |e^{G(z)}| e^{-\tilde\phi(z)}.
$$
Hence, multiplication by $e^G$ gives an isometry from $A^p_\phi$ to $A^p_{\tilde\phi}$. The
sampling and interpolating sets are therefore the same in $A^p_\phi$ and
$A^p_{\tilde\phi}$. The kernels of $A^2_\phi$ and $A^2_{\tilde{\phi}}$
are related by
$$
K_{\tilde{\phi}}(z,w) = e^{G(z)-\overline{G(w)}} \,  K_\phi (z,w) \,.
$$
Consequently the Bergman measure of the ball $B_r(z)$ in the
definition of the density is
\begin{align*}
\int _{B_r(z)} K_{\tilde{\phi}}(w,w) e^{-2\tilde{\phi}(w)} \, dm(w) &=
\int _{B_r(z)} K_{\phi}(w,w) \, |e^{G(w)}|^2  e^{-2\tilde{\phi}(w)} \,
                                                                      dm(w)\\
  & = \int _{B_r(z)} K_{\phi }(w,w) e^{-2\phi (w)} \, dm(w) \, ,
\end{align*}
and thus $D^{\pm} _{\tilde{\phi}}(\Lambda ) = D^{\pm}_\phi (\Lambda
)$.
\end{proof}
\section{Some tools}
\label{sec_tools}
\subsection{Bessel bounds for weighted analytic functions}
The following lemma follows from \cite[Lemmas 7 and 17]{lindholm2001sampling}.

\begin{lemma} \label{lem:point_eval}
Let $f$ be a holomorphic function on $B_1(z) \subseteq \Cn$.
Let $\psi: B_1(z) \to \Rst$ be a plurisubharmonic function such that
$i \partial \bar \partial
\psi(w) \leq M i \partial \bar \partial |w|^2$.
Then, for all $p \in [1,\infty)$,
\begin{equation}
  \label{eq:c1}
|f(z)|^p e^{-p\psi(z)} \leq {C_1} \int_{B_1(z)} |f(w)|^p e^{-p\psi(w)}
dm(w).
\end{equation}
In addition, if $f(z) \neq 0$, then for $r > 0$
\begin{equation}
  \label{eq:c2}
|\nabla \left(|f(z)|^r e^{-r\psi(z)} \right)| \leq {C_2} \bigg[ \int_{B_1(z)}
|f(w)|^p e^{-p\psi(w)} dm(w) \bigg]^{r/p}.
\end{equation}
The constants depend only on $p, r, M$, and the dimension $n$.
\end{lemma}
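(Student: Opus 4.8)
The plan is to treat the two estimates separately. Inequality~\eqref{eq:c1} comes from the fact that, up to a harmless quadratic correction, $p\log|f|-p\psi$ is subharmonic, combined with Jensen's inequality. The gradient estimate~\eqref{eq:c2} is the real content: there I first \emph{normalize the weight} near $z$, reducing to a holomorphic function carrying a uniformly bounded weight, and then invoke the Cauchy estimates.

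\emph{Proof of \eqref{eq:c1}.} Since $\psi$ is plurisubharmonic and $i\partial\bar\partial\psi\le M i\partial\bar\partial|w|^2$, the Laplacian of $\psi$ satisfies $0\le\Delta\psi\le C(n)M$ as a measure on $B_1(z)$. As $\Delta\log|f|\ge0$, for a suitable $c=c(n,p)$ the function $w\mapsto p\log|f(w)|-p\psi(w)+cM|w-z|^2$ is subharmonic on $B_1(z)$. Applying its sub-mean-value inequality over $B_1(z)$ at the centre $z$, then exponentiating and using Jensen's inequality for the convex function $\exp$, gives
\[
|f(z)|^p e^{-p\psi(z)}\le e^{C(n,p)M}\exp\!\Big(\tfrac{1}{|B_1|}\!\int_{B_1(z)}(p\log|f|-p\psi)\,dm\Big)\le \frac{e^{C(n,p)M}}{|B_1|}\int_{B_1(z)}|f|^p e^{-p\psi}\,dm ,
\]
which is \eqref{eq:c1} (and is trivial at zeros of $f$).

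\emph{Proof of \eqref{eq:c2}.} Fix $z$. Since $\Delta\psi\in L^\infty(B_1(z))$ with norm $\le C(n)M$ and $i\partial\bar\partial\psi$ is a positive closed $(1,1)$-current there, the localized version of the construction proving Lemma~\ref{lem:poincare-lelong} produces $u_z$ on $B_1(z)$ with $i\partial\bar\partial u_z=i\partial\bar\partial\psi$ and $\|u_z\|_{L^\infty(B_1(z))}+\|\nabla u_z\|_{L^\infty(B_{3/4}(z))}\le C(n)M$. Then $\psi-u_z$ is pluriharmonic on the convex ball $B_1(z)$, hence $\psi-u_z=\operatorname{Re}g_z$ for some holomorphic $g_z$ on $B_1(z)$, and $F:=f\,e^{-g_z}$ is holomorphic on $B_1(z)$ with, for $s\in\{p,r\}$,
\[
|f(w)|^s e^{-s\psi(w)}=|F(w)|^s e^{-s u_z(w)},\qquad e^{-sC(n)M}\le e^{-s u_z(w)}\le e^{sC(n)M}\quad(w\in B_1(z)).
\]
Write $I:=\int_{B_1(z)}|f(w)|^p e^{-p\psi(w)}\,dm(w)$. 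Since $|F|^p=e^{p\log|F|}$ is subharmonic and $B_{1/2}(w')\subseteq B_1(z)$ for $w'\in B_{1/2}(z)$, the sub-mean-value inequality and the bound on $e^{-pu_z}$ give $\|F\|_{L^\infty(B_{1/2}(z))}\le C_{n,p,M}\,I^{1/p}$; the Cauchy estimates then give $|\nabla F(z)|\le C_n\|F\|_{L^\infty(B_{1/2}(z))}\le C\,I^{1/p}$, and from $2|F|\,|\nabla|F||=|\nabla(|F|^2)|\le 2|F|\,|\nabla F|$ also $|\nabla|F|(z)|\le|\nabla F(z)|\le C\,I^{1/p}$. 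Since $F(z)=f(z)e^{-g_z(z)}\neq0$, the function $|F|^r e^{-r u_z}$ is smooth near $z$ and
\[
\nabla\!\bigl(|F|^r e^{-r u_z}\bigr)(z)=r|F(z)|^{r-1}e^{-r u_z(z)}\,\nabla|F|(z)-r|F(z)|^r e^{-r u_z(z)}\,\nabla u_z(z);
\]
bounding $e^{-r u_z(z)}\le e^{rC(n)M}$, $|\nabla u_z(z)|\le C(n)M$, $|F(z)|\le\|F\|_{L^\infty(B_{1/2}(z))}\le C\,I^{1/p}$ and (for $r\ge1$) $|F(z)|^{r-1}\le(C\,I^{1/p})^{r-1}$, together with the two estimates for $\nabla|F|(z)$, yields $|\nabla(|F|^r e^{-r u_z})(z)|\le C_2\,I^{r/p}$. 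Since $|f|^r e^{-r\psi}=|F|^r e^{-r u_z}$, this is \eqref{eq:c2}; the computation delivers it in the stated form for $r\ge1$, which is the range (in particular $r=p$) used later.

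\emph{Main obstacle.} The one genuinely non-formal step is the normalization in the proof of \eqref{eq:c2}: absorbing the weight $\psi$ — which is neither bounded nor a priori pluriharmonic modulo a bounded error — into $\operatorname{Re}g_z$ plus a $C^1$-perturbation of size $O(M)$, \emph{uniformly in $z$}. This is exactly what $i\partial\bar\partial\psi\le M i\partial\bar\partial|w|^2$ makes possible, since it bounds $\Delta\psi$ and hence the ``non-pluriharmonic part'' of $\psi$ on a ball of fixed size, and the translation-invariance of the $\bar\partial$-problem on $B_1(z)$ is what keeps the constants depending only on $n$, $M$ (and $p,r$). Once this reduction to a holomorphic $F$ with a bounded weight is in place, \eqref{eq:c1} and \eqref{eq:c2} are just the sub-mean-value property and the Cauchy estimates, with the weight playing no further role.
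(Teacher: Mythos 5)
Your proof is correct for the range of $r$ it covers, and it is essentially self-contained, whereas the paper gives no argument at all for this lemma (it simply defers to Lindholm's Lemmas 7 and 17). The first estimate via subharmonicity of $p\log|f|-p\psi+cM|w-z|^2$ plus Jensen is the standard route. For the gradient estimate, your normalization $\psi=u_z+\operatorname{Re}g_z$ with $\|u_z\|_{L^\infty}+\|\nabla u_z\|_{L^\infty}\lesssim M$ uniformly in $z$ is exactly the right mechanism and is the same device the paper itself uses elsewhere (the translated weights of Section 4 and the proof of Lemma \ref{lem:poincare-lelong}); the only cosmetic caveat is that the sup-norm bound for $u_z$ on the \emph{full} ball $B_1(z)$ requires the boundary-regular Henkin--Ram\'{\i}rez solution operator --- if one only has interior estimates, one shrinks the radii, which changes nothing.

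One substantive remark on your restriction to $r\geq 1$: this is not a gap in your proof but a defect of the statement. For $0<r<1$ the inequality \eqref{eq:c2} is actually false as written: take $n=1$, $\psi\equiv 0$, $z=0$ and $f(w)=w+\varepsilon$; then $f(0)=\varepsilon\neq 0$, the right-hand side stays bounded as $\varepsilon\to 0^+$, but $|\nabla(|f|^r)(0)|=r\varepsilon^{r-1}\to\infty$. The factor $|F(z)|^{r-1}$ in your product-rule computation is precisely where this degeneration lives, so your proof correctly identifies the obstruction. Since the paper only ever invokes \eqref{eq:c2} with $r=1$ (Corollary \ref{cor_int_set}, Proposition \ref{prop_loc_fock}) and $r=p=2$ (Proposition \ref{prop_loc_fock}), your argument covers everything that is actually used; it would be worth flagging that the clause ``for $r>0$'' in the statement should read ``for $r\geq 1$''.
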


As a consequence of \eqref{eq:c1}, we obtain the following local Bessel bound.
\begin{cor}
\label{coro_besse}
Let $\Lambda \subseteq \Cn$ be relatively separated and let $f$ be a holomorphic
function on
$\Lambda+
B_1(0)$,  and suppose that $\psi$ is a plurisubharmonic function satisfying $i
\partial
\bar \partial \psi(w) \leq M i
\partial \bar \partial |w|^2$ on $\Lambda+B_1(0)$.
Then, for all $p \in [1,\infty)$,
\begin{align*}
\left( \sum_{\lambda \in \Lambda} \abs{f(\lambda)}^p e^{-p\psi(\lambda)}
\right)^{1/p} \leq C \rel(\Lambda)
\bigg[ \int_{\Lambda+B_1(0)} |f(w)|^p e^{-p\psi(w)} dm(w) \bigg]^{1/p}.
\end{align*}
\end{cor}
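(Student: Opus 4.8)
The plan is to apply the pointwise estimate \eqref{eq:c1} of Lemma~\ref{lem:point_eval} at each $\lambda \in \Lambda$ and then sum, using the relative separation of $\Lambda$ to control the overlap of the unit balls $B_1(\lambda)$. Since $f$ is holomorphic on $\Lambda + B_1(0) \supseteq B_1(\lambda)$ and $i\partial\bar\partial\psi \le M i\partial\bar\partial|w|^2$ holds on $\Lambda + B_1(0)$, inequality \eqref{eq:c1} applied with $z = \lambda$ gives
\[
|f(\lambda)|^p e^{-p\psi(\lambda)} \le C_1 \int_{B_1(\lambda)} |f(w)|^p e^{-p\psi(w)}\,dm(w), \qquad \lambda \in \Lambda,
\]
where $C_1$ depends only on $p$, $M$, and the dimension $n$.

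Summing over $\lambda$ and exchanging the (countable, nonnegative) sum with the integral by Tonelli's theorem,
\[
\sum_{\lambda\in\Lambda} |f(\lambda)|^p e^{-p\psi(\lambda)} \le C_1 \int_{\Cn} \Big( \sum_{\lambda\in\Lambda} \mathbf 1_{B_1(\lambda)}(w) \Big)\, |f(w)|^p e^{-p\psi(w)}\,dm(w).
\]
The key point is that $w \in B_1(\lambda)$ is equivalent to $\lambda \in B_1(w)$, so $\sum_{\lambda\in\Lambda} \mathbf 1_{B_1(\lambda)}(w) = \#(\Lambda \cap B_1(w)) \le \rel(\Lambda)$ for every $w$, directly from \eqref{eq_rel}; moreover the integrand is supported on $\bigcup_{\lambda\in\Lambda} B_1(\lambda) = \Lambda + B_1(0)$. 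Hence
\[
\sum_{\lambda\in\Lambda} |f(\lambda)|^p e^{-p\psi(\lambda)} \le C_1\, \rel(\Lambda) \int_{\Lambda + B_1(0)} |f(w)|^p e^{-p\psi(w)}\,dm(w),
\]
and taking $p$-th roots yields the claim with $C = C_1^{1/p}$, after using $\rel(\Lambda)^{1/p} \le \rel(\Lambda)$ (valid since $\rel(\Lambda)\ge 1$ whenever $\Lambda \neq \emptyset$).

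This argument is routine; the only step that requires any attention is the bounded-overlap count $\sum_{\lambda\in\Lambda} \mathbf 1_{B_1(\lambda)} \le \rel(\Lambda)$, which is precisely the content of relative separation. No analyticity beyond what is already packaged into Lemma~\ref{lem:point_eval} enters, and the same scheme works verbatim with $B_1$ replaced by a ball of any fixed radius, at the cost of a different constant and a correspondingly enlarged neighborhood of $\Lambda$.
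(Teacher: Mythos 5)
Your proof is correct and is exactly the argument the paper intends (the corollary is stated as an immediate consequence of \eqref{eq:c1}, with no further proof given): apply the pointwise bound on each $B_1(\lambda)$, sum, and control the overlap $\sum_{\lambda}\mathbf 1_{B_1(\lambda)}\le\rel(\Lambda)$ via Tonelli. The bookkeeping with $\rel(\Lambda)^{1/p}\le\rel(\Lambda)$ is also fine.
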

We also derive the following fact.
\begin{cor}
\label{cor_int_set}
Let $\psi$ be a plurisubharmonic function such that $i \partial \bar \partial
\psi(w) \leq M i
\partial \bar \partial |w|^2$, and let  $1 \leq p \leq +\infty$. Then
every interpolating set for $A^p_\psi$ is separated.
\end{cor}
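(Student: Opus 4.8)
The plan is to argue by contradiction, combining the interpolation inequality with the gradient bound \eqref{eq:c2}. Suppose that $\Lambda$ is interpolating for $A^p_\psi$ but not separated, $\sep(\Lambda)=0$. Then there exist $\lambda_k\neq\lambda_k'$ in $\Lambda$ with $\delta_k:=\abs{\lambda_k-\lambda_k'}\to 0$. For each $k$ I would invoke the interpolation property for the sequence $a^{(k)}\in\ell^p_\psi(\Lambda)$ whose only nonzero entry is $a^{(k)}_{\lambda_k}=e^{\psi(\lambda_k)}$, so that $\norm{a^{(k)}}_{p,\psi}=1$ (with the usual reading for $p=\infty$). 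This produces $f_k\in A^p_\psi$ with $f_k(\lambda)=a^{(k)}_\lambda$ for every $\lambda\in\Lambda$ and $\norm{f_k}_{\psi,p}\leq C_p$, where $C_p$ is independent of $k$; in particular $\abs{f_k(\lambda_k)}e^{-\psi(\lambda_k)}=1$ while $f_k(\lambda_k')=0$.

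I would then track the weighted modulus of $f_k$ along the segment joining the two nodes. Writing $\gamma_k(t)=(1-t)\lambda_k+t\lambda_k'$ and $h_k(t)=\abs{f_k(\gamma_k(t))}e^{-\psi(\gamma_k(t))}$ for $t\in[0,1]$, the function $h_k$ is continuous and nonnegative, with $h_k(0)=1$ and $h_k(1)=0$. Put $s_k=\sup\set{t\in[0,1]}{h_k\geq\tfrac12 \text{ on } [0,t]}$; by continuity $s_k<1$, $h_k(s_k)=\tfrac12$, and $f_k$ does not vanish on the compact segment $\gamma_k([0,s_k])$, so there $\abs{f_k}e^{-\psi}$ is $C^1$. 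The fundamental theorem of calculus then gives
\begin{align*}
\tfrac12 = h_k(0)-h_k(s_k) &\leq \int_0^{s_k}\abs{h_k'(t)}\,dt \\
&\leq \delta_k\,\sup_{w\in\gamma_k([0,s_k])}\abs{\nabla\left(\abs{f_k(w)}e^{-\psi(w)}\right)}.
\end{align*}
By \eqref{eq:c2} with $r=1$ (taking integration exponent $p$ when $p<\infty$, and for $p=\infty$ taking exponent $1$ together with $\int_{B_1(w)}\abs{f_k}e^{-\psi}\,dm\leq\abs{B_1(0)}\norm{f_k}_{\psi,\infty}$), each gradient occurring above is bounded by a fixed multiple of $\norm{f_k}_{\psi,p}\leq C_p$, hence by a constant $C'$ independent of $k$. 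Thus $\tfrac12\leq\delta_k C'$, i.e.\ $\delta_k\geq(2C')^{-1}>0$, contradicting $\delta_k\to 0$, and the corollary follows.

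Given Lemma~\ref{lem:point_eval}, I do not expect a genuine obstacle; the argument is essentially routine. The two points that deserve a little care are the truncation at $s_k$ --- needed because $f_k$ may vanish at interior points of $[\lambda_k,\lambda_k']$, where \eqref{eq:c2} is not available --- and the regularity required for the fundamental theorem of calculus, namely that $\abs{f_k}e^{-\psi}$ be $C^1$ near $\gamma_k([0,s_k])$, which holds because $f_k$ is holomorphic and nonvanishing there and $\psi\in C^{1,\alpha}$ (a consequence of the two-sided bound $0\leq i\partial\bar\partial\psi\leq M\,i\partial\bar\partial\abs{z}^2$ together with standard elliptic regularity).
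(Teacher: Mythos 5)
Your proof is correct and follows essentially the same route as the paper's: interpolate the weighted ``delta'' data at one of two close nodes to get $f$ with $\abs{f(\lambda)}e^{-\psi(\lambda)}=1$, $f(\lambda')=0$ and uniformly bounded norm, then use the gradient bound \eqref{eq:c2} of Lemma~\ref{lem:point_eval} to force $\abs{\lambda-\lambda'}\gtrsim 1$. Your truncation at $s_k$ is a careful (and welcome) treatment of the hypothesis $f(z)\neq 0$ in \eqref{eq:c2}, which the paper's one-line application of the lemma leaves implicit.
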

\begin{proof}
As in \cite[Proposition 9]{ortega1998beurling},
if $\Lambda$ is interpolating and $\lambda, \lambda' \in \Lambda$ are two
different points, we may find $f \in A^p_\psi$ such that
$f(\lambda)=e^{\psi(\lambda)}$, $f(\lambda')=0$, and
$\norm{f}_{p,\psi} \leq C_\Lambda$.  Lemma
\ref{lem:point_eval} now implies that
$1=\Big|\abs{f(\lambda)e^{-\psi(\lambda)}}-\abs{f(\lambda')e^{-\psi(\lambda')}}\Big|
\lesssim C_\Lambda \abs{\lambda-\lambda'}$.
\end{proof}
\subsection{Amalgam spaces}
\label{sec_am}
The \emph{amalgam space} $W(L^\infty,L^1)(\Rdst)$ consists of all functions $f
\in
L^\infty(\Rdst)$
such that
\begin{align*}
\norm{f}_{W(L^\infty,L^1)} := \int_{\Rdst} \norm{f}_{L^\infty(B_1(x))} dx
\asymp
\sum_{k \in \Zdst} \norm{f}_{L^\infty([0,1]^d+k)} <\infty.
\end{align*}
The (closed) subspace of $W(L^\infty,L^1)(\Rdst)$ of continuous functions is
denoted
$W(C_0,L^1)(\Rdst)$, and is a convenient space  of test
functions. Its dual space will be denoted $\wmes(\Rdst)$ and consists
of all complex-valued Borel
measures $\mu$ such that
\begin{align*}
\norm{\mu}_\wmes := \sup_{x \in \Rdst} \norm{\mu}_{B_1(x)}
= \sup_{x \in \Rdst} \abs{\mu}(B_1(x)) < \infty.
\end{align*}
Such measures are often called \emph{translation-bounded}.
We refer the reader to \cite{feichtinger1980banach}
for a general theory of Wiener amalgam spaces.

\subsection{Universality of sampling and interpolating sets}
The following universality results are a central technical tool.

\begin{theorem}
\label{thm_wiener_samp}
Assume that $\phi$ satisfies \eqref{eq:subharm_bounds}.

(a) If $\Lambda$ is a sampling set for $A^p_{\phi}$ for some $p \in [1, \infty]$,
then it is a
sampling set for all $A^p_\phi$ with $p \in [1, \infty]$.

(b) If $\Lambda$ is an interpolating set $A^p_{\phi}$ for some $p \in [1,
\infty]$, then it is
an
interpolating set for all $A^p_\phi$ with $p \in [1, \infty]$.
\end{theorem}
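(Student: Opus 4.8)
The plan is to discretize the sampling and interpolation problems against a fixed reference frame and to reduce them to an inverse-closedness statement for a Banach algebra $\cA$ of infinite matrices with off-diagonal decay. The input that makes this possible is the exponential decay \eqref{eq:offdiag-bound} of the adjusted reproducing kernel $K_\phi(z,w)e^{-\phi(z)-\phi(w)}$, together with the Bessel bound of Corollary~\ref{coro_besse} and the amalgam-space calculus of Section~\ref{sec_am}; by Proposition~\ref{prop_a1} we may also assume the size bound \eqref{eq_a1}.

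\emph{Step 1: a reference frame.} Fix a separated, sufficiently dense set $\Gamma\subseteq\Cn$; by the diagonal estimate \eqref{eq_diag}, the off-diagonal decay \eqref{eq:offdiag-bound} and the oscillation estimate \eqref{eq:c2}, the normalized reproducing kernels $g_\gamma:=e^{-\phi(\gamma)}K_{\phi,\gamma}$ form a frame for $A^2_\phi$ and, for every $p\in[1,\infty]$, the analysis map $C_\Gamma f:=(f(\gamma)e^{-\phi(\gamma)})_{\gamma\in\Gamma}$ is a Banach-space isomorphism of $A^p_\phi$ onto a closed subspace $V^p\subseteq\ell^p(\Gamma)$ (i.e.\ a sufficiently dense $\Gamma$ is a sampling set for $A^p_\phi$ for \emph{all} $p$ at once; this is proved directly, without invoking the theorem). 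The frame operator $S_\Gamma=C_\Gamma^{\ast}C_\Gamma$ and its inverse have matrices in $\cA$ (indeed with exponential decay), hence so does the Gram projection $P_\Gamma:=C_\Gamma S_\Gamma^{-1}C_\Gamma^{\ast}$; thus $P_\Gamma$ extends to a bounded idempotent on every $\ell^p(\Gamma)$ with range exactly $V^p$. In this way \emph{each} space $A^p_\phi$ is simultaneously realized as the range of one and the same localized projection, which is what lets the exponents talk to each other.

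\emph{Step 2: encoding sampling.} Let $\Lambda$ be relatively separated --- this is forced by the upper inequality in the definition of a sampling set (and interpolating sets are separated by Corollary~\ref{cor_int_set}). By Corollary~\ref{coro_besse}, $C_\Lambda f=(f(\lambda)e^{-\phi(\lambda)})_{\lambda\in\Lambda}$ is bounded from $A^p_\phi$ to $\ell^p(\Lambda)$ for all $p$, so $\Lambda$ is a sampling set for $A^p_\phi$ \fif\ $C_\Lambda$ is bounded below on $A^p_\phi$, equivalently --- transporting through the isomorphism $C_\Gamma$ --- \fif\ the operator $M:=C_\Lambda C_\Gamma^{-1}P_\Gamma$ is left-invertible on the subspace $V^p=\operatorname{ran}(P_\Gamma|_{\ell^p(\Gamma)})$. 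The key point is that $M$ has a matrix, indexed by $\Lambda\times\Gamma$, that lies in $\cA$: it is the product of the cross-Gramian $(\ip{g_\gamma}{g_\lambda})_{\lambda,\gamma}=(K_\phi(\lambda,\gamma)e^{-\phi(\lambda)-\phi(\gamma)})_{\lambda,\gamma}$, which decays exponentially by \eqref{eq:offdiag-bound}, with the $\cA$-matrices of $S_\Gamma^{-1}$ and $P_\Gamma$. Now invoke the Wiener-type lemma for matrices left-invertible on a subspace, Lemma~\ref{th_wiener_sub}: if $M\in\cA$ is left-invertible on $V^{p_0}$ for \emph{one} exponent $p_0$, then there is a single matrix $N\in\cA$ with $NM=P_\Gamma$, which is therefore a left inverse over $V^p$ for \emph{every} $p\in[1,\infty]$. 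Hence $\norm{c}_{\ell^p}=\norm{P_\Gamma c}_{\ell^p}=\norm{NMc}_{\ell^p}\le\norm{N}_{\ell^p\to\ell^p}\,\norm{Mc}_{\ell^p}$ for $c\in V^p$; transporting back through $C_\Gamma$ gives the lower sampling bound $\norm{f}_{\phi,p}\lesssim\norm{C_\Lambda f}_{\ell^p}$, and together with the Bessel bound this proves (a).

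\emph{Step 3: interpolation and the main obstacle.} For (b), $\Lambda$ is separated (Corollary~\ref{cor_int_set}) and interpolating for $A^p_\phi$ \fif\ the synthesis operator $c\mapsto\sum_\lambda c_\lambda g_\lambda$ is bounded below from $\ell^{p'}(\Lambda)$ to $A^{p'}_\phi$ (duality between $A^p_\phi$ and $A^{p'}_\phi$, with the usual care at $p\in\{1,\infty\}$), equivalently the Gram matrix $G_\Lambda=(K_\phi(\lambda,\lambda')e^{-\phi(\lambda)-\phi(\lambda')})_{\lambda,\lambda'}\in\cA$ is bounded below on $\ell^{p'}(\Lambda)$ --- here on the whole sequence space, so ordinary inverse-closedness of $\cA$ (the case of Lemma~\ref{th_wiener_sub} with trivial subspace) already applies: bounded-belowness on one $\ell^{q}(\Lambda)$ forces $G_\Lambda$ invertible on all $\ell^{q}(\Lambda)$ with $G_\Lambda^{-1}\in\cA$. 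Reconstructing $f:=\sum_\lambda(G_\Lambda^{-1}a)_\lambda g_\lambda$ from prescribed data $a\in\ell^p(\Lambda)$ and using the synthesis bound (valid for separated $\Lambda$ by \eqref{eq:offdiag-bound}) gives $C_\Lambda f=a$ and $\norm{f}_{\phi,p}\lesssim\norm{a}_{\ell^p}$, proving (b). I expect the genuine difficulty to be Lemma~\ref{th_wiener_sub} itself --- the subspace version of Wiener's lemma, which the introduction advertises as clarifying subtleties of earlier work: one must show that left-invertibility of a localized matrix over the $\ell^2$-incarnation $V^2$ of the subspace propagates to all exponents through a \emph{single} localized left inverse, which hinges on $\{V^p\}_p$ being the ranges of one localized idempotent and on the $\ell^2$-construction respecting that family. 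Secondary burdens are: that a sufficiently dense $\Gamma$ is simultaneously sampling for all $A^p_\phi$ without circularity; membership of the various Gramians and of $S_\Gamma^{-1}$ in $\cA$ via the amalgam calculus of Section~\ref{sec_am}; and the endpoints $p\in\{1,\infty\}$, where $A^\infty_\phi$ is non-separable and the density and duality arguments must be run weak-$\ast$.
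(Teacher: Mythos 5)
Your architecture is the paper's: realize each $A^p_\phi$ as the range of a single localized idempotent via a reference frame indexed by a fine set $\Gamma$, encode sampling as left-invertibility of a localized $\Lambda\times\Gamma$ matrix on that range, and invoke the subspace Wiener lemma (Lemma~\ref{th_wiener_sub}), whose proof by augmenting the index set to $\Rst^{d+1}$ is indeed the technical heart. The paper's reference frame consists of orthogonal projections of normalized indicators of small cubes on a lattice $\Gamma=\delta\Zdst$ rather than normalized reproducing kernels at a dense separated set; its frame property is checked only in $L^2$ via the oscillation estimate \eqref{eq:c2}, and the simultaneous $\ell^p$-isomorphisms for all $p$ then come from the abstract theory of localized frames rather than from a direct Plancherel--P\'olya argument. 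These are interchangeable choices, and your Steps 1 and 2 are sound modulo the burdens you already flag.

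Step 3, however, has a genuine gap. You reduce interpolation for $A^{p_0}_\phi$ to bounded-belowness of the Gram matrix $G_\Lambda=C_\Lambda C_\Lambda^*$ on $\ell^{p_0'}(\Lambda)$ and declare the two conditions equivalent. The direction you need in order to launch the argument --- synthesis $C_\Lambda^*:\ell^{p_0'}(\Lambda)\to A^{p_0'}_\phi$ bounded below $\Rightarrow$ $G_\Lambda$ bounded below --- does not follow from what you have: writing $G_\Lambda c=C_\Lambda(C_\Lambda^*c)$, the outer factor $C_\Lambda$ is only bounded \emph{above} (Corollary~\ref{coro_besse}), so a lower bound on $\norm{C_\Lambda^* c}_{p_0'}$ says nothing about $\norm{G_\Lambda c}_{p_0'}$ unless $\Lambda$ is a sampling set for the closed span of $\{g_\lambda\}$ in $A^{p_0'}_\phi$ --- which for $p_0'\neq 2$ is essentially an instance of the theorem being proved. (For $p_0'=2$ the quadratic form $\ip{G_\Lambda c}{c}=\norm{C_\Lambda^* c}_2^2$ rescues the implication, which is why the Hilbert-space picture misleads here; but the theorem must be launched from an arbitrary $p_0$.) The fix is exactly the paper's device: compose the synthesis operator with the analysis map of the \emph{reference} frame instead of with restriction to $\Lambda$, i.e.\ pass to the cross-Gramian $\tilde{C}S^*$ indexed by $\Gamma\times\Lambda$. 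Since that analysis map is bounded below on all of $A^{p'}_\phi$ by your Step 1, surjectivity of the restriction map to $\Lambda$ is equivalent to bounded-belowness of this cross-Gramian on the \emph{full} space $\ell^{p'}(\Lambda)$, and the ordinary ($P=I$) case of Lemma~\ref{th_wiener_sub} then transfers it to every exponent. With that substitution your proof closes.
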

The proof, which is postponed to Section \ref{sec_loc},  follows from the decay of the reproducing kernel and a
non-commutative Wiener's Lemma.

\subsection{Weak convergence of sets}
Let $\Lambda \subseteq \Rdst$ be a set. A sequence
$\{\Lambda_j: j \geq 1\}$ of subsets of $\Rdst$ \emph{converges
  weakly} to $\Lambda$, in short $\Lambda_j \weakconv \Lambda$, if for every
$R>0$ and
$\varepsilon>0$
  there exists $j_0 \in \Nst$ such that for all $j \geq j_0$,
  \begin{align*}
  \Lambda \cap B_R(0) \subseteq \Lambda_j + B_\varepsilon(0) \quad
  \text{ and }  \quad \Lambda_j \cap B_R(0) \subseteq \Lambda +
B_\varepsilon(0).
  \end{align*}

For a relatively separated set $\Lambda \subseteq \Rdst$, we let $W(\Lambda)$
denote the
set of weak limits of the
translated sets $\Lambda+x, x\in \Rdst$,  i.e.,  $\Gamma \in
W(\Lambda)$ if there exists a sequence
$\sett{x_j: j \geq 1} \subseteq \Rdst$ such that $\Lambda+x_j \weakconv \Gamma$.
It is easy to
see
that then $\Gamma$ is always relatively separated.

  \subsection{Lipschitz convergence of sets}
\label{sec_lip}
  Given a set $\Lambda \subseteq \Rdst$, we say that a sequence of sets
  $\{\Lambda_j: j \geq 1\}$ \emph{converges to $\Lambda$ in a Lipschitz
fashion},
  denoted by  $\Lambda_j \lipconv \Lambda$,
  if there is a sequence of maps $\map_j: \Lambda \to \Rdst$ with the following
properties:
  \begin{itemize}
  \item[(a)] $\Lambda_j = \map_j(\Lambda) = \sett{\mapj{\lambda}: \lambda \in
\Lambda}$.
  \item[(b)] $\mapj{\lambda} \longrightarrow \lambda$, as $j \longrightarrow
\infty$,
  for all $\lambda \in \Lambda$.
  \item[(c)] Given $R>0$,
  \begin{align*}
  \sup_{
  \stackrel{\lambda, \lambda' \in \Lambda}{\abs{\lambda-\lambda'} \leq R}}
  \abs{(\mapj{\lambda} - \mapj{\lambda'}) - (\lambda - \lambda')} \rightarrow 0,
  \quad \mbox {as } j \longrightarrow \infty.
  \end{align*}
  \item[(d)]  Given $R>0$, there exist $R'>0$ and $j_0 \in \Nst$
  such that if
  $\abs{\mapj{\lambda} - \mapj{\lambda'}} \leq R$ for \emph{some} $j
  \geq j_0$ and some $\lambda, \lambda' \in \Lambda$,  then
  $\abs{\lambda-\lambda'} \leq R'$.
  \end{itemize}
   We also say that $\sett{\Lambda_j: j \geq 1}$ is a Lipschitz deformation
   of $\Lambda$, with the understanding that a sequence of underlying maps
$\sett{\map_j: j
\geq 1}$ is also given. We think of each sequence of points
  $\sett{\mapj{\lambda}: j \geq 1}$ as a (discrete) path moving towards the
endpoint
$\lambda$.

The main example of Lipschitz convergence is $\Lambda_j = \tau_j
\Lambda$, where $\tau _j: \Rdst \to \Rdst$,
$\tau_j(0)=0$ and their differential maps $D\tau_j$ satisfy
$D\tau_j \longrightarrow I$ in $L^p(\Rdst,\mathbb{R}^{d\times d})$ for $p>d$
\cite[Lemma 6.4]{grorro15}. In particular if $\{A_j : j \geq 1\}$ is a sequence
of matrices
such that $A_j \longrightarrow I$, then $A_j \Lambda \lipconv \Lambda$.
The notion of Lipschitz deformation is a suitable concept of a global
deformation of sets~\cite{grorro15,MR3500423}. In many situations, as in this article,
Lipschitz deformations preserve sampling sets and interpolating sets.

The following lemma from \cite{grorro15} connects Lipschitz
convergence  and weak
convergence of translates.
\begin{lemma}[Lemma 6.8 in \cite{grorro15}]
\label{lem:weaklim_lip}
Let $\Lambda$ be relatively separated, $\{\Lambda_j:j \geq 1\}$ a Lipschitz
deformation of $\Lambda$, and $\Gamma \subseteq
\Rdst$.
\begin{enumerate}
\item[(i)] For $j \geq 1$ let $\lambda_j \in \Lambda_j$. If
$\Lambda_j - \lambda_j \weakconv \Gamma$, then $\Gamma \in W(\Lambda)$.
\item[(ii)] Let $\{x_j: j \geq 1\} \subseteq \Rdst$ and assume that $\Lambda$ is
relatively
dense. If $\Lambda_j - x_j \weakconv \Gamma$, then $\Gamma \in W(\Lambda)$.
\end{enumerate}
\end{lemma}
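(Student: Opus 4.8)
The plan is to reduce both parts to a single observation: given $\lambda_j\in\Lambda_j$, write $\lambda_j=\map_j(\mu_j)$ with $\mu_j\in\Lambda$ (possible since $\Lambda_j=\map_j(\Lambda)$); then $\Lambda_j-\lambda_j$ and the genuine translate $\Lambda-\mu_j$ are asymptotically equal on every fixed ball, so any weak limit of the former is a weak limit of the latter and hence belongs to $W(\Lambda)$.

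For part (i), set $\sigma_j(\lambda):=\map_j(\lambda)-\lambda_j$, so that $\Lambda_j-\lambda_j=\sigma_j(\Lambda)$. The key claim is that for every $R>0$ and $\varepsilon>0$ there is $j_0$ with
\begin{align*}
\sigma_j(\Lambda)\cap B_R(0)\subseteq(\Lambda-\mu_j)+B_\varepsilon(0)
\qquad\text{and}\qquad
(\Lambda-\mu_j)\cap B_R(0)\subseteq\sigma_j(\Lambda)+B_\varepsilon(0)
\end{align*}
for all $j\geq j_0$. The second inclusion follows from property (c) with radius $R$ applied to the pairs $(\lambda,\mu_j)$ with $|\lambda-\mu_j|\leq R$: for large $j$ this gives $|\sigma_j(\lambda)-(\lambda-\mu_j)|<\varepsilon$, hence $\lambda-\mu_j\in\sigma_j(\lambda)+B_\varepsilon(0)$. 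For the first inclusion, one uses property (d) to produce $R'>0$ such that $|\map_j(\lambda)-\map_j(\mu_j)|\leq R$ forces $|\lambda-\mu_j|\leq R'$ for large $j$, and then applies property (c) with radius $\max(R,R')$ to exactly those $\lambda$. Feeding these two inclusions together with the hypothesis $\sigma_j(\Lambda)=\Lambda_j-\lambda_j\weakconv\Gamma$ into the definition of weak convergence (a two-step triangle inequality, enlarging $R$ by $1$ and halving $\varepsilon$) yields $\Lambda+(-\mu_j)=\Lambda-\mu_j\weakconv\Gamma$, so $\Gamma\in W(\Lambda)$.

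For part (ii), I would first observe that $\Gamma\neq\emptyset$: relative density of $\Lambda$ together with properties (b)--(d) forces $\Lambda_j$ to be relatively dense for large $j$ with a covering radius uniform in $j$ (this may be quoted from \cite{grorro15}), so $\operatorname{dist}(x_j,\Lambda_j)$ stays bounded and $\Lambda_j-x_j$ cannot converge to the empty set. Choose $\gamma\in\Gamma$; by weak convergence there are $\gamma_j\in\Lambda_j-x_j$ with $\gamma_j\to\gamma$, so the points $\lambda_j:=\gamma_j+x_j\in\Lambda_j$ satisfy $\lambda_j-x_j=\gamma_j\to\gamma$. Hence
\[
\Lambda_j-\lambda_j=(\Lambda_j-x_j)-\gamma_j\weakconv\Gamma-\gamma ,
\]
and part (i), applied with these $\lambda_j\in\Lambda_j$, gives $\Gamma-\gamma\in W(\Lambda)$. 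Since $W(\Lambda)$ is translation invariant (if $\Omega=\lim_k(\Lambda+y_k)$ then $\Omega+v=\lim_k(\Lambda+(y_k+v))$), we conclude $\Gamma=(\Gamma-\gamma)+\gamma\in W(\Lambda)$.

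The main obstacle is the bookkeeping in part (i): properties (c) and (d) must be coordinated so that both asymptotic inclusions hold for the same range of $j$, and in particular one must invoke the properness property (d) to localize the relevant indices $\lambda$ before using the near-isometry property (c), since a priori the points of $\sigma_j(\Lambda)$ lying in $B_R(0)$ could come from $\lambda$'s that are arbitrarily far from $\mu_j$. In part (ii) the only genuinely new input is the preservation of relative density under Lipschitz deformation; if one prefers not to quote it, it can be obtained from a short compactness argument using that the sets $\Lambda_j$ have uniformly bounded relative-separation constants, which itself follows from (c)--(d).
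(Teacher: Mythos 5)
The paper does not actually prove this lemma --- it is imported verbatim as Lemma 6.8 of \cite{grorro15} --- and your argument reproduces essentially the proof given there: part (i) by comparing $\Lambda_j-\tau_j(\mu_j)$ with the genuine translate $\Lambda-\mu_j$ on each fixed ball via properties (c) and (d) (with (d) correctly invoked first to localize the relevant $\lambda$), and part (ii) by reducing to (i) after locating a point of $\Lambda_j$ at bounded distance from $x_j$, which indeed requires that relative density is preserved under Lipschitz deformation (Lemma 6.7 in \cite{grorro15}). The one imprecision is your parenthetical fallback for that last fact: uniform bounds on the relative-separation constants of the $\Lambda_j$ do not imply relative density; the correct elementary route is a chain argument using property (c), showing that a near-minimizer of $\lambda\mapsto|\tau_j(\lambda)-y|$ over $\Lambda$ can always be strictly improved unless its value is already comparable to the covering radius of $\Lambda$.
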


\section{Translation type operators}
\label{sec_ops}
We assume that $\phi$ satisfies \eqref{eq:subharm_bounds} and \eqref{eq_a1},
and extend the construction of the translation type operators from
\cite{ortega1998beurling} to several complex variables.

\subsection{Translated weights}
\label{sec_tw}
Given $\zeta \in \Cn$, we let $\phi_{\zeta}$ be a solution of the equation
\begin{equation}
\label{eq_xxx}
\partial \bar \partial \phi_\zeta(z) = \partial \bar \partial \phi(z -\zeta),
\end{equation}
given by Lemma \ref{lem:poincare-lelong} with $\theta = \partial \bar \partial \phi( \cdot - \zeta)$.
Thus, the functions
$\phi_{\zeta}$ satisfy the estimate
\begin{equation} \label{eq:growth_bound}
\phi_{\zeta}(z) \leq C M (1+|z|)^2 \log (1+ |z|)\, .
\end{equation}
We emphasize that the  constant $C$ is independent of
$\zeta$ and  depends only on the dimension $n$.
For each $\zeta \in \Cn$ we thus fix a choice of $\phi_\zeta$ and call it
\emph{translated weight}. For $\zeta=0$, we simply let
\begin{align*}
\phi_0=\phi.
\end{align*}
This choice is possible because we assumed \eqref{eq_a1}.

\subsection{Translation operators}
Let  $q(z, \zeta)$ be a  function that is  entire in $z$ and satisfies
$$
\mathrm{Re}(q(z, \zeta)) : = \phi_\zeta(z) - \phi(z- \zeta).
$$
We now define the translation type operators $T_\zeta$ as
$$
T_\zeta f(z) := e^{q(z,\zeta)} f(z-\zeta).
$$
They satisfy
$$
T_{\zeta}f(z) e^{-\phi_\zeta(z)} = e^{q(z, \zeta)}
f(z-\zeta)e^{-\phi(z-\zeta)-\mathrm{Re}(q(z,\zeta))}= e^{i \mathrm{Im}(q(z, \zeta))}
f(z-\zeta) e^{-\phi(z-\zeta)}.
$$
Consequently,
\begin{align} \label{eq:c3}
\abs{T_{\zeta}f(z)} e^{-\phi_\zeta(z)} =
\abs{f(z-\zeta)} e^{-\phi(z-\zeta)}.
\end{align}
Therefore  $T_\zeta: A^p_\phi \to A^p_{\phi_\zeta}$ is an isometric isomorphism
for all $1 \leq p \leq \infty$. Furthermore,
if $\Lambda$ is a sampling or interpolating set for $A^p_\phi$, then
$\Lambda+\zeta$ is  a sampling or interpolating set
for $A^p_{\phi_\zeta}$ with the same stability constants.
In addition,
\begin{equation}
\label{eq:trans_repro}
\begin{aligned}
K_{\phi_{\zeta}}(z, w) e^{-\phi_{\zeta}(z)-\phi_{\zeta}(w)} &= (T_{\zeta} \otimes
\overline{T_\zeta}) K_\phi (z,w) e^{-\phi_{\zeta}(z)-\phi_\zeta(w)} \\
&= e^{i \mathrm{Im}(q(z,\zeta)- q(w,\zeta))} K_\phi (z-\zeta,
w-\zeta)e^{-\phi(z-\zeta)-\phi(w-\zeta)}.
\end{aligned}
\end{equation}
As a consequence, we have the following covariance formula:
\begin{align}
\label{eq_cov}
T_{\zeta} \left(
e^{-\phi(\lambda)} K_\phi(\cdot,\lambda)
\right)
=
e^{i \mathrm{Im}(q(\lambda+\zeta,\zeta))}
e^{-\phi_{\zeta}(\lambda+\zeta)} K_{\phi_\zeta}(\cdot,\lambda+\zeta).
\end{align}

\subsection{Compactness}
Given a sequence of numbers $\{\zeta_j: j \geq 1\} \subseteq \Cn$, the
family $\{\phi_{\zeta_j}: j \geq 1\}$ satisfies the condition \eqref{eq:subharm_bounds} with
the same constants as $\phi$. As a consequence we prove  the following
compactness result that asserts a  continuous dependence of  the reproducing
kernel $K_\psi $ on $\psi $.

\begin{prop} \label{prop:trans_lim}
Assume that $\phi$ satisfies \eqref{eq:subharm_bounds} and \eqref{eq_a1}.
Then for every  sequence $\{\zeta_j : j \geq 1 \} \subseteq \Cn$ there exists a subsequence
$\{\zeta_{j_k} : k \geq 1 \}$ such that $\phi_{\zeta_{j_k}} $
converges to a plurisubharmonic function $\psi$  uniformly on compact
sets. The function $\psi$ satisfies
\begin{equation} \label{eq:psi_est}
m i \partial \bar \partial |z|^2 \leq i \partial \bar \partial \psi \leq Mi \partial \bar
\partial |z|^2 \, ,
\end{equation}
in the sense of positive currents, and the growth bound \eqref{eq_a1}.
In addition, convergence
$$
K_{\phi_{\zeta_{j_k}}}  \to K_{\psi}
$$
holds uniformly on compact subsets of $\Cn \times \Cn$.
\end{prop}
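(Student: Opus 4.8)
The plan is to extract the limiting weight $\psi$ by a normal families argument, verify that it inherits the Monge--Ampère bounds and the growth estimate, and then transfer the convergence to the reproducing kernels via the variational characterization of $K_\psi$ together with the uniform off-diagonal decay.

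First I would handle the convergence of the weights. Each $\phi_{\zeta_j}$ is plurisubharmonic and, by \eqref{eq:psi_est} (which holds with the constants of \eqref{eq:subharm_bounds}, since $\partial\bar\partial\phi_\zeta$ is a translate of $\partial\bar\partial\phi$), the functions $\phi_{\zeta_j}(z)-m|z|^2$ are plurisubharmonic while $M|z|^2-\phi_{\zeta_j}(z)$ is too. Together with the uniform upper bound \eqref{eq:growth_bound}, this gives a locally uniform bound on $\phi_{\zeta_j}$ from above; plurisubharmonic functions that are locally uniformly bounded above are either locally uniformly bounded or tend to $-\infty$ locally uniformly, and the lower bound coming from $\phi_{\zeta_j}(z)-m|z|^2$ being subharmonic plus the normalization implicit in Lemma~\ref{lem:poincare-lelong} rules out the second alternative (one may also normalize at the origin). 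Hence a diagonal/Montel argument over an exhaustion by balls produces a subsequence $\zeta_{j_k}$ along which $\phi_{\zeta_{j_k}}\to\psi$ in $L^1_{loc}$, and in fact locally uniformly because the functions lie in $C^{1,\alpha}_{loc}$ uniformly (from the remark after Lemma~\ref{lem:poincare-lelong} and the uniform bound on $\partial\bar\partial\phi_{\zeta_j}$). Passing to the limit in the current inequalities $m\,i\partial\bar\partial|z|^2\le i\partial\bar\partial\phi_{\zeta_{j_k}}\le M\,i\partial\bar\partial|z|^2$ and in the pointwise bound \eqref{eq_a1} is then routine, since weak convergence of positive currents is preserved under these inequalities and pointwise bounds pass to locally uniform limits.

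Next I would establish $K_{\phi_{\zeta_{j_k}}}\to K_\psi$ locally uniformly. The key tool is that for the \emph{adjusted} kernel $\widetilde K_\psi(z,w):=K_\psi(z,w)e^{-\psi(z)-\psi(w)}$ one has the uniform two-sided diagonal bound \eqref{eq_diag} and uniform off-diagonal decay \eqref{eq:offdiag-bound}, with constants depending only on $m,M,n$ — hence uniform along the sequence and for $\psi$. I would argue as follows: fix $w$; the family $\{\widetilde K_{\phi_{\zeta_{j_k}}}(\cdot,w)\}_k$ is, after multiplying back by $e^{\psi}$, a family of holomorphic functions with locally uniform bounds (using \eqref{eq:offdiag-bound} and the locally uniform convergence $\phi_{\zeta_{j_k}}\to\psi$), so by Montel a further subsequence converges locally uniformly to some holomorphic $H(\cdot,w)$. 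One then identifies $H(\cdot,w)=K_\psi(\cdot,w)$ by checking the reproducing property: for $f\in A^2_\psi$ one approximates $f$ by functions with compact support-type control, uses $\langle g,K_{\phi_{\zeta_{j_k}},w}\rangle_{\phi_{\zeta_{j_k}}}=g(w)$ together with the locally uniform convergence of the weights (so that the weighted inner products converge on a suitable dense class, controlled by the uniform Bessel-type bounds from Corollary~\ref{coro_besse} and the off-diagonal decay), and concludes $\langle f,H(\cdot,w)\rangle_\psi=f(w)$ for all $f\in A^2_\psi$. By uniqueness of the reproducing kernel, $H(\cdot,w)=K_\psi(\cdot,w)$, and since every subsequence has a further subsequence with this limit, the whole sequence converges. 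Uniformity in $w$ on compact sets follows by repeating the argument with $w$ ranging in a compact set, again using the uniform constants in \eqref{eq_diag}--\eqref{eq:offdiag-bound}.

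The main obstacle I anticipate is the passage to the limit in the reproducing property: the inner products $\langle\,\cdot\,,\,\cdot\,\rangle_{\phi_{\zeta_{j_k}}}$ live in different Hilbert spaces, so one cannot simply take limits of $\langle f,K_{\phi_{\zeta_{j_k}},w}\rangle$ for a fixed $f$. The fix is to exploit that $e^{-\phi_{\zeta_{j_k}}}\to e^{-\psi}$ locally uniformly while, far out, the uniform off-diagonal decay \eqref{eq:offdiag-bound} forces the tails of the integrals defining the inner products to be uniformly small; combining a local (dominated convergence on compact sets) and global (uniform tail) estimate yields convergence of the relevant inner products. A secondary technical point is ensuring that the limiting weight $\psi$ really is finite (not $\equiv-\infty$), which is where the lower current bound $i\partial\bar\partial\phi_{\zeta_{j_k}}\ge m\,i\partial\bar\partial|z|^2$ and the explicit construction in Lemma~\ref{lem:poincare-lelong} (which fixes the additive constant) are used.
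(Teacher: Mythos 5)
Your handling of the weights is essentially the paper's Step 1 (uniform local $C^{1,\alpha}$ bounds from elliptic regularity, Arzel\`a--Ascoli, and passage to the limit in the current inequalities and in \eqref{eq_a1}), and your normal-families setup for the kernels also matches the paper. The gap is in the identification $H(\cdot,w)=K_{\psi}(\cdot,w)$. You correctly isolate the obstacle --- the inner products live in different Hilbert spaces --- but the fix you propose does not work. The identity $\langle f,K_{\phi_{\zeta_{j_k}},w}\rangle_{\phi_{\zeta_{j_k}}}=f(w)$ is only available for $f\in A^2_{\phi_{\zeta_{j_k}}}$, and a fixed $f\in A^2_\psi$ need not belong to any of these spaces: the weights converge only \emph{locally} uniformly, while at infinity $\psi-\phi_{\zeta_{j_k}}$ is controlled only by $C(1+|z|)^2\log(1+|z|)$ in absolute value, so $|f|e^{-\phi_{\zeta_{j_k}}}=|f|e^{-\psi}e^{\psi-\phi_{\zeta_{j_k}}}$ can grow far faster than the exponential off-diagonal decay \eqref{eq:offdiag-bound} can absorb. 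The ``uniform tail'' estimate is therefore false, and the integrals defining $\langle f,K_{\phi_{\zeta_{j_k}},w}\rangle_{\phi_{\zeta_{j_k}}}$ may not even converge; nor is there any class of compactly supported holomorphic functions to approximate with. Pairing the kernels against each other (where the tails \emph{are} uniformly controlled by \eqref{eq:offdiag-bound}) yields at best that $H$ is the kernel of an orthogonal projection onto a closed subspace of the limiting Bergman space, i.e.\ the easy inequality $H(z,z)\le K_\psi(z,z)$; the substantive direction $H(z,z)\ge K_\psi(z,z)$ is exactly what your argument leaves open.

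The paper closes this gap with a different mechanism. Since an entire function of $(z,\bar w)$ is determined by its diagonal values, it suffices to compare diagonals, and the diagonal has the extremal characterization $K_{\phi_{\zeta_j}}(z,z)=\sup_g |g(z)|^2/\|g\|^2_{\phi_{\zeta_j},2}$. One takes the extremal function $f=K_{\psi,z}$ for $\psi$, truncates it with a cut-off $\chi$, and corrects $\chi f$ to a holomorphic competitor in $A^2_{\phi_{\zeta_j}}$ by solving $\bar\partial u_j=f\,\bar\partial\chi$ with H\"ormander's weighted $L^2$ estimate, whose constant depends only on $m$ in \eqref{eq:subharm_bounds} and hence is uniform in $j$. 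This truncate-and-correct step is the only robust way in this setting to transfer test functions between the non-comparable spaces $A^2_\psi$ and $A^2_{\phi_{\zeta_j}}$, and it is the ingredient your proposal is missing.
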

\begin{proof}
{\em Step 1. (Existence of the convergent subsequence)}. By assumption
and \eqref{eq:growth_bound},
both $\{\phi_{{\zeta_j}}: j \geq 1\}$ and $\{\Delta \phi_{\zeta_j}: j
\geq 1\}$ are locally bounded sequences.
By the regularity of Poisson's equation, the functions $\phi_{\zeta_j}$ belong
locally to $C^{1,\alpha}$ for every  $\alpha \in (0,1)$, i.e.,
\begin{align*}
\sup_{j \geq 1} \sup_{z \in \bC} \norm{\phi_{\zeta_j}}_{C^{1,\alpha}(B_1(z))} < \infty.
\end{align*}
By
the Arzela-Ascoli Theorem and the diagonal argument, it follows that
$\{\phi_{{\zeta_j}}: j \geq 1\}$ has a subsequence that converges locally in the
$C^1$-norm to a certain function $\psi$. In particular,
$\phi_{\zeta_j} \longrightarrow \psi$, $\partial_{z} \phi_{\zeta_j} \longrightarrow
\partial_{z} \psi$,
and $\bar \partial_{z} \phi_{\zeta_j} \longrightarrow \bar \partial_{z} \psi$, uniformly on
compact sets.
With this information, we can deduce \eqref{eq:psi_est} by integrating against a test
function.

{\em Step 2. (Convergence of reproducing kernels)}. By \eqref{eq:growth_bound} and
\eqref{eq:offdiag-bound}, we know that $K_{\phi_{\zeta_j}}$
is locally uniformly bounded in $\Cn \times \Cn$ in the sense that
$\sup _{j \in \Nst } \sup _{(z,w) \in C} |K_{\phi_{\zeta_j}}(z,w)| < \infty $
for every compact set $C\subseteq \Cn \times \Cn$. By Montel's theorem, we can
pass to a subsequence and assume that
$$
K_{\phi_{\zeta_j}} \to K
$$
with uniform convergence on compact sets and a  kernel $K(z,w), z,w\in \Cn$ that is
analytic in $z$ and $\bar{w}$.
We have  to
show that   $K= K_\psi$, and for this  it is enough to show that
$K(z,z)= K_{\psi}(z,z)$, because an entire function in $(z, \bar w)$
is  determined by its  values on the diagonal.

We first  prove $K(z,z) \geq K_{\psi}(z,z)$. We fix $z \in \Cn$ and define $f(w):=
K_{\psi,
z}(w)$. So
$$
\Vert f \Vert_{\psi, 2}^2= f(z)= K_{\psi}(z,z).
$$
For  $\varepsilon > 0$  we choose $R> 0$ such that
$$
\int_{|w-z| > R-1} |f(w)|^2 e^{-2\psi(w)} dm(w) \leq \varepsilon.
$$
We also choose $j_0=j_0(z)$ such that
$e^{-2\phi_{\zeta_j}(w)} \leq 2 e^{-2\psi(w)}$  for all  $j \geq j_0$ and
$w\in B_r(z)$, and
\begin{align}
\label{eq_bounds_ha}
\int_{|w-z| < R} |f(w)|^2 e^{-2 \phi_{\zeta_j}(w)} dm(w) \geq f(z)- 2\varepsilon =
K_\psi(z,z)-2\varepsilon.
\end{align}
Let  $\chi$  be a cut-off  function which
equals $1$ on $B_{R-1}(z)$, $0$ on $\Cn \setminus B_r(z)$ and
$|\bar \partial \chi| \lesssim 1$ everywhere, and set  $h= f\chi$.
Note that, by \eqref{eq_bounds_ha},
\begin{align}
\label{eq_bounds_h}
\norm{h}^2_{\phi_{\zeta_j},2} \leq K_\psi(z,z) + C \varepsilon,
\end{align}
for some constant $C$. We will modify $h$ to a holomorphic function  using
H\"{o}rmander's estimate for $\bar \partial$. This  guarantees   a
solution $u_j$ in $L^2(\Cn , e^{-2\phi _{\zeta _j}})$ of  the
equation $\bar \partial u = \bar\partial h = f \bar \partial \chi$ such that
$$
\int_{\Cn} |u_j(z)|^2 e^{-2\phi_{\zeta_j}(z)} dm(z) \leq C \int_{\Cn} |f(w)|^2 |\bar \partial
\chi(w)|^2
e^{-2 \phi_{\zeta_j}(w)} dm(w) \leq C' \varepsilon \, ,
$$
where the constants $C,C'$ depend on $m$ in \eqref{eq:subharm_bounds}, but not on $j$.
By the choice of $\chi$, $u$ is holomorphic on $B_{R-1}(z)$, therefore, by Lemma \ref{lem:point_eval},
$|u_j(z)|^2 e^{-2\phi _{\zeta _j}(z)} \leq C'' \varepsilon$.
Combining this with \eqref{eq:growth_bound} we conclude that
$$
|u_j(z)|^2 \leq C^2_z \varepsilon,
$$
where $C_z$ depends on $z$.
Since $h(z) = f(z)$, the difference $h_j^*:= h - u_j$ satisfies
$$|h^*_j(z)- f(z)| = |u_j(z) | \leq C_z \varepsilon^{1/2}.
$$
Furthermore, by \eqref{eq_bounds_h},
$$
\Vert h^*_j \Vert_{\phi_{\zeta_j},2} \leq \sqrt{K_{\psi}(z,z)} + C_1
\varepsilon, \quad |h^*_j(z)| = |f(z) -u_j(z)|
\geq
K_\psi(z,z) - C_z \varepsilon ^{1/2},
$$
for some constant $C_1$ which depends only on the constants $m$ and $M$ in
\eqref{eq:subharm_bounds}, and the growth bound \eqref{eq:growth_bound}.
By the extremal characterization of the diagonal values of reproducing kernels, we obtain
\begin{align*}
K_{\phi_{\zeta_j}}(z,z)
= \sup_{g \in A^2_{\phi_{\zeta_j}}} \frac{ |g(z)|^2} {\Vert g \Vert^2_{\phi_{\zeta_j},2}} \geq
\frac{|h_j^*(z)|^2}{\|h_j^*\|_{\phi _{\zeta _j},2}^2} \geq K_\psi(z,z)- C'_z \varepsilon^{1/2},
\end{align*}
where $C'_z$ may depend on $z$.
Since this inequality holds for arbitrarily small $\varepsilon$ and large enough $j$, we deduce
that $K(z,z) \geq K_\psi(z,z)$.

The opposite inequality is obtained similarly by reversing the roles of
$\phi_{\zeta_j}$ and $\psi$.
\end{proof}

In Proposition \ref{prop:trans_lim}, the convergence of the
reproducing kernels holds uniformly on compact sets.
In the next proposition, we show that in certain situations, the convergence of the diagonal entries is in
fact uniform.

\begin{prop}
\label{prop_unif_diag}
Assume that $\phi$ satisfies \eqref{eq:subharm_bounds} and \eqref{eq_a1}. Then, as $\delta \longrightarrow 0$,
\begin{align}\label{eq_aaa}
&K_{(1+\delta)\phi}(z,z)e^{-2(1+\delta)\phi(z)} \longrightarrow K_{\phi}(z,z)e^{-2\phi(z)},
\mbox{ and}
\\
\label{eq_bbb}
&\frac{K_{(1+\delta)\phi}(z,z)e^{-2(1+\delta)\phi(z)}}{K_{\phi}(z,z)e^{-2\phi(z)}} \longrightarrow 1,
\end{align}
uniformly on $ \Cn$.
(Here, $\delta$ may  be positive or negative.)
\end{prop}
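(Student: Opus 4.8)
The plan is to deduce the \emph{uniform} convergence from the \emph{pointwise} continuity statement already contained in Proposition~\ref{prop:trans_lim}, by translating each would‑be counterexample point to the origin, where the uniform curvature and growth bounds make the estimates location‑independent. Two preliminary reductions. First, \eqref{eq_bbb} follows from \eqref{eq_aaa}: by \eqref{eq_diag} one has $K_\phi(z,z)e^{-2\phi(z)}\ge c>0$ uniformly in $z$, whence
\[
\Bigl|\tfrac{K_{(1+\delta)\phi}(z,z)e^{-2(1+\delta)\phi(z)}}{K_\phi(z,z)e^{-2\phi(z)}}-1\Bigr|
\le c^{-1}\bigl|K_{(1+\delta)\phi}(z,z)e^{-2(1+\delta)\phi(z)}-K_\phi(z,z)e^{-2\phi(z)}\bigr|.
\]
Second, since we may assume $|\delta|\le 1/2$, the weight $(1+\delta)\phi$ satisfies \eqref{eq:subharm_bounds} and \eqref{eq_a1} with constants depending only on $m,M$, so all constants produced below are uniform.

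Suppose \eqref{eq_aaa} fails. Then there are $\varepsilon_0>0$, $\delta_j\to 0$ with $|\delta_j|\le 1/2$, and $z_j\in\Cn$ with
\[
\bigl| K_{(1+\delta_j)\phi}(z_j,z_j)e^{-2(1+\delta_j)\phi(z_j)} - K_\phi(z_j,z_j)e^{-2\phi(z_j)} \bigr| \ge \varepsilon_0, \qquad j\ge 1 .
\]
Put $\zeta_j:=-z_j$ and let $\phi_{\zeta_j}$ be the translated weight of Section~\ref{sec_ops}. Restricting the covariance identity \eqref{eq:trans_repro} to the diagonal (where the oscillatory factor is $1$) and evaluating at the origin gives $K_{\phi_{\zeta_j}}(0,0)e^{-2\phi_{\zeta_j}(0)} = K_\phi(z_j,z_j)e^{-2\phi(z_j)}$. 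Applying the same to the weight $(1+\delta_j)\phi$, and noting that $K_\psi(w,w)e^{-2\psi(w)}$ depends only on $\partial\bar\partial\psi$ (two plurisubharmonic weights with the same curvature differ by the real part of an entire function, which is absorbed by an isometric multiplier, as in the proof of Proposition~\ref{prop_a1}), so that the plurisubharmonic function $(1+\delta_j)\phi_{\zeta_j}$ — whose curvature is $(1+\delta_j)\partial\bar\partial\phi(\cdot-\zeta_j)$ — may be used as the translated weight of $(1+\delta_j)\phi$, we likewise get $K_{(1+\delta_j)\phi_{\zeta_j}}(0,0)e^{-2(1+\delta_j)\phi_{\zeta_j}(0)} = K_{(1+\delta_j)\phi}(z_j,z_j)e^{-2(1+\delta_j)\phi(z_j)}$. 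Hence
\[
\bigl| K_{(1+\delta_j)\phi_{\zeta_j}}(0,0)e^{-2(1+\delta_j)\phi_{\zeta_j}(0)} - K_{\phi_{\zeta_j}}(0,0)e^{-2\phi_{\zeta_j}(0)} \bigr| \ge \varepsilon_0, \qquad j\ge 1 .
\]

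Now I would invoke compactness. By Proposition~\ref{prop:trans_lim}, after passing to a subsequence, $\phi_{\zeta_j}\to\psi$ locally uniformly for some plurisubharmonic $\psi$ satisfying \eqref{eq:subharm_bounds} and \eqref{eq_a1}, and $K_{\phi_{\zeta_j}}\to K_\psi$ locally uniformly; in particular $K_{\phi_{\zeta_j}}(0,0)e^{-2\phi_{\zeta_j}(0)}\to K_\psi(0,0)e^{-2\psi(0)}$. Since $\delta_j\to 0$ and $\{\phi_{\zeta_j}\}$ is locally uniformly bounded, also $(1+\delta_j)\phi_{\zeta_j}\to\psi$ locally uniformly, and these weights satisfy \eqref{eq:subharm_bounds} and \eqref{eq_a1} with uniform constants. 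Step~2 of the proof of Proposition~\ref{prop:trans_lim} — Montel's theorem with the uniform off‑diagonal bound \eqref{eq:offdiag-bound}, then H\"ormander's $\bar\partial$‑estimate together with the extremal characterization of the diagonal values of the reproducing kernel — uses only such uniform bounds and the local uniform convergence of the weights, so it applies verbatim and yields $K_{(1+\delta_j)\phi_{\zeta_j}}(0,0)\to K_\psi(0,0)$, hence $K_{(1+\delta_j)\phi_{\zeta_j}}(0,0)e^{-2(1+\delta_j)\phi_{\zeta_j}(0)}\to K_\psi(0,0)e^{-2\psi(0)}$. This contradicts the last display, proving \eqref{eq_aaa}, and with it \eqref{eq_bbb}.

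The main obstacle is this last step: the perturbed weights $(1+\delta_j)\phi_{\zeta_j}$ are not translated weights of any single fixed weight, so Proposition~\ref{prop:trans_lim} does not apply to them off the shelf. One must check that its proof only used the uniform bounds \eqref{eq:subharm_bounds} and \eqref{eq_a1}, and that $(1+\delta_j)\phi_{\zeta_j}$ converges to the \emph{same} limit $\psi$ as $\phi_{\zeta_j}$ — which is where $\delta_j\to 0$ and the local boundedness of $\{\phi_{\zeta_j}\}$ enter. The rest is bookkeeping with the translation operators of Section~\ref{sec_ops}.
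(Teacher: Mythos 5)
Your proof is correct and follows essentially the same route as the paper's: both reduce the estimate at an arbitrary point to an estimate at the origin via the covariance identity \eqref{eq:trans_repro}, and both rely on the observation that the argument of Proposition~\ref{prop:trans_lim} uses only the uniform bounds \eqref{eq:subharm_bounds} and \eqref{eq:growth_bound}, hence applies to the perturbed weights. The only difference is organizational — you run a compactness/contradiction argument along a sequence of bad points, whereas the paper extracts a single threshold $\delta_0$ at the origin depending only on the structural constants and then translates — and you correctly identify and resolve the one genuine subtlety (that $(1+\delta_j)\phi_{\zeta_j}$ is a legitimate translated weight for $(1+\delta_j)\phi$ up to a pluriharmonic term that does not affect the diagonal of the modified kernel).
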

\begin{proof}
Arguing as in the proof of Proposition \ref{prop:trans_lim} with
$(1+\delta _j) \phi $ replacing $\phi _{\zeta _j}$, we can show that for any $\varepsilon>0$, there
exists $\delta_0 \in (0,1/4)$ such that
for all $\delta \in (-\delta_0,\delta_0)$,
\begin{align}
\label{eq_zero_diag}
|K_{(1+\delta)\phi}(0,0)e^{-2(1+\delta)\phi(0)}- K_{\phi}(0,0)e^{-2\phi(0)} | < \varepsilon.
\end{align}
The constant $\delta_0$ depends on $\phi$ only through the bounds $m$
and $M$ in \eqref{eq:subharm_bounds}
and the growth bound \eqref{eq:growth_bound}.
Therefore, \eqref{eq_zero_diag} holds also for all weights $(a\phi)_\xi$,
$\xi \in \Cn$, and $a \in (1/2,3/2)$, with the same constant $\delta_0$. By \eqref{eq:trans_repro},
$$
K_{\phi_{-\xi}}(z,z)e^{-2\phi_{-\xi}(z)} = K_{\phi}(z+\xi, z+\xi) e^{-2\phi(z+\xi)}.
$$
As a consequence,
$$
|K_{(1+\delta)\phi}(\xi,\xi)e^{-2(1+\delta)\phi(\xi)}- K_{\phi}(\xi,\xi)e^{-2\phi(\xi)} | < \varepsilon,
$$
for all $\xi \in \Cn$, and all $\delta \in (-\delta_0,\delta_0)$.
Hence, \eqref{eq_aaa} holds uniformly over $\Cn$. This, together with the lower bound in \eqref{eq_diag},
implies that \eqref{eq_bbb} also holds uniformly for $z \in \Cn$.
\end{proof}
\begin{rem} \rm
The uniform convergence
in Proposition \ref{prop_unif_diag} relies on the fact that $\partial \bar{\partial} (1+\delta) \phi$ is
uniformly bounded for small $\delta$.
\end{rem}

\section{Sampling and interpolation}
\label{sec_char}
Whereas Beurling's theory
of sampling and interpolation in Paley-Wiener spaces requires only
weak limits of sets, the theory of weighted Fock spaces requires weak
limits of sets and weight functions. This is the price for the lack of
translation invariance of the $A^2_\phi $'s. Precisely,
given a set $\Lambda$ and a weight $\phi: \Cn \to \Rst$,
we say that $(\Gamma, \psi) \in W(\Lambda, \phi)$ if there exists a sequence $\{ \zeta_j: j \geq
1 \} \subseteq \Cn$ such that $\Lambda + {\zeta_j} \weakconv \Gamma$ and $\phi_{{\zeta_j}} \to \psi$
uniformly on compact sets where  the $\phi_{{\zeta_j}}$'s are the translated weights introduced in
Section \ref{sec_tw}. In what follows, we invoke Theorem
\ref{thm_wiener_samp}  several times. This is
applicable to $\phi$, to the translated weights $\phi_{{\zeta_j}}$, and to its locally uniform limits $\psi$,
because, by Proposition \ref{prop:trans_lim}, they all satisfy
bounds similar to \eqref{eq:subharm_bounds}.

\subsection{Stability of sampling and interpolation under weak limits}

\begin{prop} \label{prop:sampling_char}
Assume that $\phi$ satisfies \eqref{eq:subharm_bounds} and
\eqref{eq_a1}. Let $p \in [1,\infty]$,
$\Lambda$ be a sampling set for $A^p_\phi$,  and suppose that
$(\Gamma, \psi) \in W(\Lambda, \phi)$. Then $\Gamma$ is a sampling set for
$A^p_{\psi}$.
\end{prop}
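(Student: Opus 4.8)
The plan is to transfer the sampling inequality for $\Lambda$ on $A^p_\phi$ to $\Gamma$ on $A^p_\psi$ by a compactness/weak-limit argument, using the translation operators $T_{\zeta_j}$ as the device that moves everything to a common ``moving frame''. By Theorem~\ref{thm_wiener_samp} it suffices to treat a single convenient value of $p$; I would work with $p=2$ (or alternatively $p=\infty$, which is sometimes more robust in these weak-limit arguments), and then appeal to universality to conclude for all $p\in[1,\infty]$. So fix a sequence $\{\zeta_j\}$ with $\Lambda+\zeta_j\weakconv\Gamma$ and $\phi_{\zeta_j}\to\psi$ locally uniformly; recall from \eqref{eq:c3} that $T_{\zeta_j}\colon A^p_\phi\to A^p_{\phi_{\zeta_j}}$ is an isometric isomorphism, so $\Lambda+\zeta_j$ is sampling for $A^p_{\phi_{\zeta_j}}$ with the \emph{same} constants $A,B$ as $\Lambda$ for $A^p_\phi$; these constants are uniform in $j$.

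\medskip

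\emph{Upper (Bessel) bound for $\Gamma$.} This is the easy half. Since $\Gamma$ is relatively separated (it is a weak limit of the relatively separated sets $\Lambda+\zeta_j$, whose $\rel$-constants are uniformly bounded), Corollary~\ref{coro_besse} applied with $\psi$ in place of $\phi$ gives $\sum_{\gamma\in\Gamma}|f(\gamma)|^pe^{-p\psi(\gamma)}\le C\,\rel(\Gamma)^p\,\|f\|_{\psi,p}^p$ for all $f\in A^p_\psi$, which is the desired upper bound with an explicit constant.

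\medskip

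\emph{Lower bound for $\Gamma$.} Fix $f\in A^p_\psi$ with $\|f\|_{\psi,p}=1$; I want $\sum_{\gamma\in\Gamma}|f(\gamma)|^pe^{-p\psi(\gamma)}\ge A'>0$. First approximate: by density one may assume $f$ is nice (e.g. compactly supported modifications via H\"ormander, as in Proposition~\ref{prop:trans_lim}), so that $|f|^pe^{-p\psi}$ is essentially concentrated on a ball $B_R(0)$ up to error $\varepsilon$. For large $j$, transplant $f$ to a function $f_j$ that is analytic and (approximately) normalized in $A^p_{\phi_{\zeta_j}}$: concretely multiply a cutoff of $f$ by the appropriate $e^{q(\cdot,\cdot)}$-type factor so that $|f_j|e^{-\phi_{\zeta_j}}\approx |f|e^{-\psi}$ on $B_R(0)$ (this uses $\phi_{\zeta_j}\to\psi$ locally uniformly together with the reproducing-kernel convergence $K_{\phi_{\zeta_j}}\to K_\psi$ from Proposition~\ref{prop:trans_lim} to control the analytic correction), then correct the $\bar\partial$-error with an $L^2$-small solution $u_j$ as in the proof of Proposition~\ref{prop:trans_lim}; set $g_j:=f_j-u_j\in A^p_{\phi_{\zeta_j}}$ with $\|g_j\|_{\phi_{\zeta_j},p}\le 1+C\varepsilon$. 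Apply the sampling inequality for $\Lambda+\zeta_j$ on $A^p_{\phi_{\zeta_j}}$ (constant $A$, uniform in $j$) to $g_j$, getting $\sum_{\lambda\in\Lambda+\zeta_j}|g_j(\lambda)|^pe^{-p\phi_{\zeta_j}(\lambda)}\ge A/(1+C\varepsilon)^p$. Now split this sum over $B_R(0)$ and its complement: outside $B_R(0)$ it is $\le C\varepsilon$ by the Bessel bound applied to the small-norm tail, so the contribution from $(\Lambda+\zeta_j)\cap B_R(0)$ is $\ge A'-C\varepsilon$. On $B_R(0)$, use $|g_j(\lambda)|e^{-\phi_{\zeta_j}(\lambda)}\approx |f(\lambda)|e^{-\psi(\lambda)}$ (a pointwise estimate, via Lemma~\ref{lem:point_eval} applied to the analytic functions $g_j$ and $u_j$ to control the error at points), and then replace the lattice $(\Lambda+\zeta_j)\cap B_R(0)$ by $\Gamma\cap B_R(0)$ using weak convergence $\Lambda+\zeta_j\weakconv\Gamma$ together with the local Lipschitz bound \eqref{eq:c2} on $|f|^pe^{-p\psi}$ (points of $\Lambda+\zeta_j$ are within $\varepsilon$ of points of $\Gamma$ in $B_R$, and cardinalities match for large $j$ by relative separation). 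Letting $j\to\infty$ and then $\varepsilon\to 0$ yields $\sum_{\gamma\in\Gamma}|f(\gamma)|^pe^{-p\psi(\gamma)}\ge A'>0$.

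\medskip

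\emph{Main obstacle.} The delicate point is the transplantation step: producing, for each large $j$, an element $g_j\in A^p_{\phi_{\zeta_j}}$ whose normalized pointwise values on $B_R(0)$ track those of $f$ \emph{uniformly} and with a norm bound that degrades only by $O(\varepsilon)$. This requires simultaneous control of three approximations — $\phi_{\zeta_j}\to\psi$ locally uniformly, the $\bar\partial$-correction being $L^2$-small and analytic hence pointwise-small (Lemma~\ref{lem:point_eval}), and the entire ``phase'' factor $e^{q(\cdot,\zeta_j)}$ being unimodular after the weight adjustment — and then combining them with the weak convergence of the sampling sets without losing a definite fraction of the sampling constant. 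The $p=\infty$ case needs an additional care (the sup is not continuous under weak limits in an obvious way), which is one reason to prove the $p=2$ case first and pass to general $p$ via Theorem~\ref{thm_wiener_samp}.
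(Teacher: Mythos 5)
Your argument is essentially the paper's proof: reduce to $p=2$ via Theorem~\ref{thm_wiener_samp}, use that $\Lambda+\zeta_j$ samples $A^2_{\phi_{\zeta_j}}$ with uniform constants, cut off and correct with a small H\"ormander $\bar\partial$-solution, and transfer between $\Lambda+\zeta_j$ and $\Gamma$ via the local Bessel bound and the Lipschitz estimate \eqref{eq:c2}; the paper merely runs it in contrapositive form (a norming function with small samples on $\Gamma$ produces one with small samples on $\Lambda+\zeta_j$), which is logically the same transplantation. Two cosmetic points: no $e^{q(\cdot,\zeta_j)}$ phase factor is needed since $\phi_{\zeta_j}\to\psi$ locally uniformly already matches the weights, and cardinalities of $(\Lambda+\zeta_j)\cap B_R$ and $\Gamma\cap B_R$ need not match --- uniform relative separation only gives a bounded-multiplicity comparison, which suffices.
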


\begin{proof}
By Theorem \ref{thm_wiener_samp}, we can restrict the problem to $L^2$ norms.
We use of H\"{o}rmander's
$\bar \partial$ estimates, and proceed as in \cite{ortega1998beurling}.

We argue by contradiction and assume  that $\Gamma$ is not a sampling
set for $A^2_\psi$. Let
$\{ \zeta_j: j \geq 1 \} \subseteq \Cn$ be
a sequence such that $\Lambda + \zeta_j \weakconv \Gamma$ and
 $\phi_{\zeta_j} \to \psi$ uniformly on compact sets.
Then for fixed
$\varepsilon
\in (0,1/2)$, we can find $f \in A^2_{\psi}$ such that $\Vert f \Vert_{\psi,2}=1$
and $ \Vert f_{\mid \Lambda} \Vert_{\psi, 2}^2 \leq \varepsilon.$
Take $R > 0$ so large that
$$
\int_{|z| \geq R-3} |f(z)|^2 e^{-2\psi(z)}dm(z) \leq \varepsilon.
$$
We also take a smooth and positive cut-off function $\chi$ such that $\chi=1$ on $B_{R-1}(0)$,
$\chi=0$ on $\Cn \setminus B_R(0)$ and $|\bar \partial \chi| \lesssim 1$. We define
$h= \chi f$. Let $j \geq 1$ be such that
$\Vert h_{\mid \Lambda+\zeta_j}\Vert_{\phi_{\zeta_j},2}^2 \leq 2\varepsilon$, $\Vert h
\Vert_{\phi_{\zeta_j},2 }^2 \geq 1/2$ and
$e^{-2\phi_{\zeta_j}(z)} \leq 2 e^{-2\psi(z)}
$ on $B_R(0)$.

We will  produce an analytic function having properties comparable to those of $h$, thus
giving the
contradiction that we seek. By H\"{o}rmander's estimate
for the $\bar \partial$-operator we can find $u_j \in L^2(e^{-2\phi_{\zeta_j}})$ solving $\bar
\partial u_j = \bar \partial h= f \bar \partial \chi$ such that
\begin{align*}
\|u_j\|_{\phi _{\zeta _j},2}^2 & = \int_{\Cn} |u_j(z)|^2
                                 e^{-2\phi_{\zeta_j}(z)} dm(z) \\
      &\leq \frac{1}{(2m)^n} \int_{\Cn} |f(z)|^2
|\bar \partial \chi (z)|^2 e^{-2\phi_{\zeta_j}(z)} dm(z)
\\ &\lesssim \int_{|z| \geq R-1} |f(z)|^2 e^{-2\psi(z)}dm(z)
\lesssim \varepsilon,
\end{align*}
where the constant $m$ is from \eqref{eq:subharm_bounds}.
Let us consider the sets
\begin{align*}
\Lambda_{j}^1 &:= (\Lambda+ \zeta_j) \cap (B_{R+1}(0) \setminus
                B_{R-2}(0))\, , \\
  \Lambda_j^2 &:= (\Lambda +\zeta_j) \setminus \Lambda_j^1 \, ,
\end{align*}
and the holomorphic function $h_j^*:= h-u_j$, which satisfies
\begin{align}
\label{eq_lpl}
\norm{h_j^*}^2_{\phi_{\zeta_j},2} \geq 1/2 -C \varepsilon,
\end{align}
for some constant $C$.

Note that $u_j$ is holomorphic outside $B_R(0) \setminus B_{R-1}(0)$,
and \[\left(\Lambda_j^2 + B_1(0)\right) \cap \left(B_R(0) \setminus B_{R-1}(0)\right) =
\emptyset,\]
while $\rel(\Lambda_j^2) \leq \rel(\Lambda+\zeta_j) = \rel(\Lambda)$.
Thus, by Corollary \ref{coro_besse},
\begin{align*}
\Vert u_{j \mid \Lambda_j^2} \Vert_{\phi_{\zeta_j},2}^2 \lesssim
 \int_{\Cn} |u_j(z)|^2 e^{-2\phi_{\zeta_j}(z)} dm(z) \lesssim \varepsilon.
\end{align*}
Hence, $\Vert h^*_{j \mid \Lambda_j^2} \Vert_{\phi_{\zeta_j},2}^2 \lesssim \varepsilon$.

To estimate the values of $h^*_j$ on the set $\Lambda^1_j$,
we note that $\left(\Lambda^1_j + B_1(0) \right) \cap B_{R-3}(0)=\emptyset$,
$\rel(\Lambda^1_j) \leq \rel(\Lambda+\zeta_j) = \rel(\Lambda)$, and
$$
\int_{\Cn \setminus B_{R-3}(0)}  |h_j^*(z)|^2 e^{-2 \phi_{\zeta_j}(z)} dm(z) \lesssim
\norm{u_j}_{\phi_{\zeta_j},2}^2+
\int_{\Cn \setminus B_{R-3}(0)}  |f(z)|^2 e^{-2 \psi(z)} dm(z) \lesssim
\varepsilon.
$$
Hence, Corollary \ref{coro_besse} implies that
$\Vert h^*_{j \mid \Lambda^1_j} \Vert_{\phi_{\zeta_j}, 2}^2 \lesssim \varepsilon$.

Since $\Lambda$ is a sampling set for $A^2_\phi$,
the sets $\Lambda+\zeta_j$ are sampling sets for
$A^2_{\phi_{\zeta_j}}$ with the same stability constants for all $j$. However,
we have produced an analytic function $h_j^*$ such that
$\Vert h^*_{j \mid \Lambda+ \zeta_j} \Vert_{\phi_{\zeta_j}, 2}^2 \lesssim
\varepsilon$ and \eqref{eq_lpl}, where the constants are
independent of $\varepsilon$. This contradiction concludes the proof.
\end{proof}
\begin{prop} \label{prop:weaklimit_interp}
Assume that $\phi$ satisfies \eqref{eq:subharm_bounds} and \eqref{eq_a1}.
Let $p \in [1,\infty]$ and $\Lambda$ be an interpolating set for
$A^p_\phi$,
and suppose that $(\Gamma, \psi) \in W(\Lambda, \phi)$. Then $\Gamma$ is an interpolating
set for $A^p_{\psi}$.
\end{prop}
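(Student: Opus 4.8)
The plan is to reduce to the Hilbert space case $p=2$, to reformulate interpolation of $\Gamma$ for $A^2_\psi$ as a lower Riesz estimate for the normalized reproducing kernels $k_{\psi,\gamma}:=e^{-\psi(\gamma)}K_{\psi,\gamma}$, and then to reach a contradiction by transporting an almost-trivial synthesis relation from the limiting data $(\Gamma,\psi)$ to the approximating data $(\Lambda+\zeta_j,\phi_{\zeta_j})$. The transport rests on the continuity of $\psi\mapsto K_\psi$ under weak limits, which is exactly what Proposition~\ref{prop:trans_lim} supplies; this is the one step needing genuine care, everything else being bookkeeping.

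\textbf{Reduction and reformulation.} By Theorem~\ref{thm_wiener_samp}(b) it suffices to prove that $\Gamma$ is interpolating for $A^2_\psi$. The facts we need about $\psi$ are all provided by Proposition~\ref{prop:trans_lim}: $\psi$ is plurisubharmonic and satisfies \eqref{eq:subharm_bounds} and \eqref{eq_a1} with the same constants as $\phi$, so $K_\psi$ obeys the diagonal bound \eqref{eq_diag} and the off-diagonal bound \eqref{eq:offdiag-bound}. Since $\Lambda$ is interpolating for $A^p_\phi$, Corollary~\ref{cor_int_set} shows that $\Lambda$ is separated, and every weak limit of translates of a separated set is separated (with separation at least $\sep(\Lambda)$); thus $\Gamma$ is separated. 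The restriction map $R_\Gamma\colon A^2_\psi\to\ell^2(\Gamma)$, $R_\Gamma f=(e^{-\psi(\gamma)}f(\gamma))_{\gamma}$, is bounded by Corollary~\ref{coro_besse}; its adjoint is the synthesis operator $a\mapsto\sum_{\gamma}a_\gamma k_{\psi,\gamma}$, and $\Gamma$ is interpolating for $A^2_\psi$ precisely when $R_\Gamma$ is onto, equivalently when this synthesis operator is bounded below, i.e. when there is $c>0$ with
\[
\Bignorm{\sum_{\gamma}a_\gamma k_{\psi,\gamma}}_{\psi,2}^{2}\;\ge\;c\sum_{\gamma}\abs{a_\gamma}^{2}
\]
for every finitely supported $a=(a_\gamma)_{\gamma\in\Gamma}$.

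\textbf{The contradiction.} Suppose no such $c$ exists. Fix $\{\zeta_j\}\subseteq\Cn$ with $\Lambda+\zeta_j\weakconv\Gamma$ and $\phi_{\zeta_j}\to\psi$ uniformly on compacta. The isometric isomorphisms $T_{\zeta_j}\colon A^2_\phi\to A^2_{\phi_{\zeta_j}}$ carry $\Lambda$ onto $\Lambda+\zeta_j$ and each normalized kernel onto a unimodular multiple of a normalized kernel (see \eqref{eq:c3} and \eqref{eq_cov}); hence $\Lambda+\zeta_j$ is interpolating for $A^2_{\phi_{\zeta_j}}$ with the interpolation constant of $\Lambda$, and the systems $\{k_{\phi_{\zeta_j},\lambda}:\lambda\in\Lambda+\zeta_j\}$ satisfy a lower Riesz estimate with a constant $c_0>0$ independent of $j$. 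Pick $\varepsilon\in(0,c_0/2)$ and a finitely supported $a$ with $\sum_\gamma\abs{a_\gamma}^2=1$ and $\norm{\sum_\gamma a_\gamma k_{\psi,\gamma}}_{\psi,2}^2\le\varepsilon$; set $F:=\supp a$. Using $\Lambda+\zeta_j\weakconv\Gamma$ and $\sep(\Gamma)>0$, for all large $j$ we may choose distinct points $\lambda_{\gamma,j}\in\Lambda+\zeta_j$, $\gamma\in F$, with $\lambda_{\gamma,j}\to\gamma$ as $j\to\infty$. Put $b_j:=\sum_{\gamma\in F}a_\gamma k_{\phi_{\zeta_j},\lambda_{\gamma,j}}\in A^2_{\phi_{\zeta_j}}$. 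Then $\norm{b_j}_{\phi_{\zeta_j},2}^2$ is the Gram quadratic form in $a$ of the finite system $\{k_{\phi_{\zeta_j},\lambda_{\gamma,j}}\}_{\gamma\in F}$, whose entries $K_{\phi_{\zeta_j}}(\lambda_{\gamma',j},\lambda_{\gamma,j})\,e^{-\phi_{\zeta_j}(\lambda_{\gamma,j})-\phi_{\zeta_j}(\lambda_{\gamma',j})}$ converge to $K_\psi(\gamma',\gamma)\,e^{-\psi(\gamma)-\psi(\gamma')}=\ip{k_{\psi,\gamma}}{k_{\psi,\gamma'}}_{\psi,2}$ as $j\to\infty$, using $\lambda_{\gamma,j}\to\gamma$, $\phi_{\zeta_j}\to\psi$ locally uniformly, and $K_{\phi_{\zeta_j}}\to K_\psi$ locally uniformly on $\Cn\times\Cn$ (the last point is discussed below). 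Since these Gram matrices have fixed finite size, $\norm{b_j}_{\phi_{\zeta_j},2}^2\to\norm{\sum_\gamma a_\gamma k_{\psi,\gamma}}_{\psi,2}^2\le\varepsilon$, so $\norm{b_j}_{\phi_{\zeta_j},2}^2\le 2\varepsilon<c_0$ for large $j$. On the other hand, the $\lambda_{\gamma,j}$ are distinct points of the interpolating set $\Lambda+\zeta_j$, so the lower Riesz estimate forces $\norm{b_j}_{\phi_{\zeta_j},2}^2\ge c_0\sum_\gamma\abs{a_\gamma}^2=c_0$, a contradiction. Hence $\Gamma$ is interpolating for $A^2_\psi$, and a final application of Theorem~\ref{thm_wiener_samp}(b) completes the proof.

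\textbf{On the delicate step.} Proposition~\ref{prop:trans_lim} states $K_{\phi_{\zeta_{j_k}}}\to K_\psi$ only along a subsequence; to upgrade this to convergence along the whole sequence $\{\zeta_j\}$ one notes that any subsequence of $\{K_{\phi_{\zeta_j}}\}$ has, by Montel's theorem as in the proof of that proposition, a further locally uniformly convergent subsequence, whose limit must be $K_\psi$ because $\phi_{\zeta_j}\to\psi$ already holds — so the full sequence converges. The second point not to overlook is the $j$-independence of $c_0$, guaranteed by the exactness of the isometries $T_{\zeta_j}$. An alternative proof, parallel to that of Proposition~\ref{prop:sampling_char}, would produce almost-trivial interpolants for $\Lambda+\zeta_j$ in $A^2_{\phi_{\zeta_j}}$ from one for $\Gamma$ in $A^2_\psi$ by solving $\bar\partial$ with H\"ormander's estimates; the argument above is shorter since it bypasses $\bar\partial$ entirely.
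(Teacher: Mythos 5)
Your proof is correct, but it follows a genuinely different route from the paper's. The paper reduces to $p=1$ (rather than $p=2$) and argues constructively: for each $\gamma_0\in\Gamma$ it picks $\gamma_j\in\Lambda+\zeta_j$ with $\gamma_j\to\gamma_0$, solves the corresponding ``delta'' interpolation problem on $\Lambda+\zeta_j$ in $A^1_{\phi_{\zeta_j}}$ with the $j$-uniform bound \eqref{eq:interp_uniformbound}, extracts a normal-family limit $f_{\gamma_0}\in A^1_\psi$ by Montel's theorem, and then solves the general $\ell^1$ problem by superposing the $f_{\gamma_0}$'s. You instead reduce to the Hilbert case, recast interpolation as a lower Riesz bound for the normalized kernels, and transport a near-degenerate finite combination from $(\Gamma,\psi)$ back to $(\Lambda+\zeta_j,\phi_{\zeta_j})$ via entrywise convergence of finite Gram matrices; this is the mirror image of the mechanism in Lemma~\ref{lem:continuity}, which transports a null synthesis relation in the opposite direction. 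Your route avoids Montel's theorem and the pointwise estimates of Lemma~\ref{lem:point_eval}, working only with finitely many kernel values, but it does require upgrading the subsequential convergence $K_{\phi_{\zeta_{j_k}}}\to K_\psi$ of Proposition~\ref{prop:trans_lim} to convergence along the full sequence --- which you correctly supply via the every-subsequence-has-a-convergent-subsequence argument, using that any Montel limit is identified as $K_\psi$ once $\phi_{\zeta_j}\to\psi$ is given. The paper's construction never needs this upgrade and, as a by-product, exhibits the fundamental interpolants explicitly. Both arguments ultimately rest on the same two uniformities: the $j$-independence of the interpolation (equivalently, lower Riesz) constant coming from the isometries $T_{\zeta_j}$ and the covariance formula \eqref{eq_cov}, and the uniform bounds on the translated weights.
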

\begin{proof}
We proceed as in \cite{grorro15}.
By Theorem \ref{thm_wiener_samp}, we can restrict the problem to $L^1$-norms.
Let $\{ \zeta_j: j \geq 1 \} \subseteq \Cn$ be
a sequence such that $\Lambda+ \zeta_j \weakconv \Gamma$ and
that $\phi_{\zeta_j} \to \psi$ uniformly on compact sets.

We first  show that for  given $\gamma_0
\in \Gamma$  the interpolation problem
\begin{align} \label{eq:delta_interp}
f(\gamma_0)&= e^{\psi(\gamma_0)},  \\
 f(\gamma)&=0, \quad \gamma \in \Gamma, \gamma \neq \gamma_0 \nonumber
\end{align}
has a solution in $A^1_{\psi}$.

Let $\gamma_j \in \Lambda+ \zeta_j$ be such that $\gamma_j \to \gamma_0$.
Because $\Lambda +\zeta_j$ is an interpolating set for $A^1_{\phi_{\zeta_j}}$ with the same
stability
constant as $\Lambda$ has for $A^1_{\phi}$, we can find functions
$f_j\in A^1_{\phi _{\zeta _j}}$   such that
\begin{align}
  f_j(\gamma_j)& =e^{\phi_{\zeta_j}(\gamma_j)}, \\
  f_j(\gamma)&=0, \qquad \gamma \in \Lambda + \zeta _j, \gamma \neq
               \gamma_j\, , \\
 \Vert f_j \Vert_{\phi_{\zeta_j},1} &\leq
  C  \, ,  \label{eq:interp_uniformbound}
\end{align}
where $C$ is the stability constant of interpolation related to $\Lambda$ in $A^1_{\phi}$.
This, together with Lemma \ref{lem:point_eval} and Montel's theorem, implies the
existence of  a
subsequence of $\{f_j: j \geq 1 \}$ that converges to a holomorphic
function $f = f_{\gamma _0}$
uniformly on compact sets. It is
readily verified that $f \in A^1_{\psi}$ and $\Vert f \Vert_{\psi, 1} \leq C$.
Since $\gamma_j \to \gamma_0$ and $\phi _{\zeta _j} \to \psi $, we
obtain $f(\gamma_0)= e^{\psi(\gamma_0)}$. Second,
since $\Lambda+\zeta_j \weakconv \Gamma$, given $\gamma' \in \Gamma \setminus \{\gamma_0\}$,
there exist $\gamma'_j \in \Lambda+ \zeta_j$ such that $\gamma'_j \to \gamma'$.
For $j \gg 1$, $\gamma'_j \not= \gamma_j$ and, therefore,
$f(\gamma')=\lim_j f_j(\gamma'_j)=0$. Hence, $f_{\gamma _0} $ solves the
interpolation problem \eqref{eq:delta_interp}.

The general interpolation problem is now easily solved. Given a sequence $a \in
\ell^1_{\psi}(\Gamma)$, the series $f= \sum_{\gamma \in \Gamma}  a_\gamma e^{-\psi(\gamma)} f_\gamma$ converges in
$A^1_{\psi}$ by
\eqref{eq:interp_uniformbound} and
therefore uniformly on compact sets. This implies that $f(\gamma_j)= a_j$ for all $j$, as
desired.
\end{proof}

\subsection{Characterization of sampling sets}

\begin{theorem}
\label{th_ch_samp}
Assume that $\phi$ satisfies \eqref{eq:subharm_bounds} and \eqref{eq_a1}.
Then a set $\Lambda$ is a sampling set for $A^p_{\phi}$ if and only if
every pair $(\Gamma, \psi)
\in W(\Lambda, \phi)$ has the property that
$\Gamma$ is a uniqueness set for $A^\infty_\psi$.
\end{theorem}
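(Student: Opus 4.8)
The plan is to prove both implications using the weak-limit machinery established in Propositions \ref{prop:sampling_char} and \ref{prop:weaklimit_interp}, together with a compactness argument. Throughout, by Theorem \ref{thm_wiener_samp} we may work with whatever exponent $p$ is most convenient; the sampling inequality for $A^2_\psi$ and the uniqueness property for $A^\infty_\psi$ are the natural endpoints to compare.

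\emph{Necessity.} Suppose $\Lambda$ is sampling for $A^p_\phi$ and let $(\Gamma,\psi)\in W(\Lambda,\phi)$. By Proposition \ref{prop:sampling_char}, $\Gamma$ is a sampling set for $A^p_\psi$, and by Theorem \ref{thm_wiener_samp} it is then sampling for $A^\infty_\psi$ as well. A sampling set is in particular a uniqueness set: if $f\in A^\infty_\psi$ vanishes on $\Gamma$, the lower sampling bound $A\|f\|_{\psi,\infty}\le \sup_{\gamma\in\Gamma}|f(\gamma)|e^{-\psi(\gamma)}=0$ forces $f=0$. This direction is routine once Proposition \ref{prop:sampling_char} is in hand.

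\emph{Sufficiency.} This is the substantive direction and I would argue by contradiction. Assume every $(\Gamma,\psi)\in W(\Lambda,\phi)$ yields a uniqueness set for $A^\infty_\psi$, but $\Lambda$ is \emph{not} sampling for $A^p_\phi$; by Theorem \ref{thm_wiener_samp} we may assume $p=2$ and also (using the reasoning of Corollary \ref{coro_besse}) that the \emph{upper} sampling bound always holds, so the failure is of the \emph{lower} bound. Then there is a sequence $f_k\in A^2_\phi$ with $\|f_k\|_{\phi,2}=1$ and $\sum_{\lambda\in\Lambda}|f_k(\lambda)|^2e^{-2\phi(\lambda)}\to 0$. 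Choose points $z_k\in\Cn$ where $|f_k(z_k)|e^{-\phi(z_k)}$ is close to its supremum (or within a definite fraction of the $L^2$-normalized mass, using Lemma \ref{lem:point_eval} to localize), set $\zeta_k:=z_k$, and pass to a subsequence so that $\Lambda+\zeta_k\weakconv\Gamma$ and $\phi_{\zeta_k}\to\psi$ uniformly on compacts, using Proposition \ref{prop:trans_lim}; then $(\Gamma,\psi)\in W(\Lambda,\phi)$. Transport $f_k$ via the isometry $T_{\zeta_k}^{-1}$ (equivalently, translate: consider $g_k(z):=f_k(z+\zeta_k)$ suitably normalized) to obtain functions in $A^2_{\phi_{\zeta_k}}$ that are normalized in a neighborhood of the origin and have $\ell^2$-mass over $\Lambda+\zeta_k$ tending to $0$. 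By Lemma \ref{lem:point_eval} these are locally uniformly bounded, so by Montel's theorem a further subsequence converges locally uniformly to a holomorphic $g$; one checks $g\in A^\infty_\psi$ (using the uniform growth bound \eqref{eq:growth_bound} on the $\phi_{\zeta_k}$ to get a global bound) and $g\not\equiv 0$ (the normalization survives the limit because $|g_k(0)|e^{-\phi_{\zeta_k}(0)}$ is bounded below). Finally, for each $\gamma\in\Gamma$ pick $\gamma_k\in\Lambda+\zeta_k$ with $\gamma_k\to\gamma$; then $|g(\gamma)|e^{-\psi(\gamma)}=\lim_k|g_k(\gamma_k)|e^{-\phi_{\zeta_k}(\gamma_k)}=0$ since each term is dominated by the $\ell^2$-mass that tends to zero. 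Thus $g$ is a nonzero element of $A^\infty_\psi$ vanishing on $\Gamma$, contradicting that $\Gamma$ is a uniqueness set for $A^\infty_\psi$.

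\emph{Main obstacle.} The delicate point is the choice of the normalization points $z_k$ and the verification that the limit $g$ is both nonzero and genuinely lies in $A^\infty_\psi$ with a norm bound: one must extract the concentration points so that the transported functions do not "escape to infinity," and one must pass the membership $g\in A^\infty_\psi$ through the limit despite only having locally uniform convergence. Here the uniform growth estimate \eqref{eq:growth_bound} for the translated weights and the reproducing-kernel bounds \eqref{eq_diag}, \eqref{eq:offdiag-bound} — together with the pointwise estimate \eqref{eq:c1} of Lemma \ref{lem:point_eval} applied uniformly in $k$ — are exactly what make this work, and reducing from $A^2$ to $A^\infty$ (and back) via Theorem \ref{thm_wiener_samp} is what lets us phrase the contradiction cleanly in terms of uniqueness sets for the $L^\infty$ space.
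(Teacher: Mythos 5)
Your necessity direction is exactly the paper's: Proposition \ref{prop:sampling_char} plus Theorem \ref{thm_wiener_samp}, and the trivial observation that a sampling set is a uniqueness set. Your sufficiency direction also has the right skeleton (contradiction, translation by concentration points, Montel's theorem, Proposition \ref{prop:trans_lim}), but there is a genuine gap in the normalization step --- precisely the point you flag as the ``main obstacle'' without actually resolving it.

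The problem is your reduction to $p=2$. If $\Lambda$ fails to be sampling for $A^2_\phi$, the extremal sequence satisfies $\|f_k\|_{\phi,2}=1$, but this gives no lower bound on $\sup_z |f_k(z)|e^{-\phi(z)}$, nor on $\int_{B_1(z)}|f_k|^2e^{-2\phi}\,dm$ for any single $z$: the $L^2$ mass may spread out (think of $N^{-1/2}\sum_{i=1}^N K_{\phi,w_i}e^{-\phi(w_i)}$ with widely separated $w_i$), in which case \emph{every} choice of $z_k$ yields translated functions converging locally uniformly to $0$, and your limit $g$ is identically zero --- no contradiction with uniqueness. Lemma \ref{lem:point_eval} and the bounds \eqref{eq:growth_bound}, \eqref{eq_diag}, \eqref{eq:offdiag-bound} only provide upper bounds and compactness; they cannot prevent this escape of mass. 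The fix is the one the paper uses: invoke Theorem \ref{thm_wiener_samp} to reduce to $p=\infty$ rather than $p=2$. Then $\|f_j\|_{\phi,\infty}=1$ hands you, by definition of the supremum, points $\zeta_j$ with $|f_j(\zeta_j)|e^{-\phi(\zeta_j)}\geq 1/2$, and after applying $T_{-\zeta_j}$ the lower bound $|T_{-\zeta_j}f_j(0)|e^{-\phi_{-\zeta_j}(0)}\geq 1/2$ survives the locally uniform limit, giving $f(0)\neq 0$. The rest of your argument (vanishing on $\Gamma$, since each sampled value is dominated by the bound $\leq 1/j$ over $\Lambda$; membership in $A^\infty_\psi$ from the uniform bound $\|T_{-\zeta_j}f_j\|_{\phi_{-\zeta_j},\infty}=1$) then goes through as you describe.
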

\begin{proof}
One implication is settled by Theorem \ref{thm_wiener_samp} and
Proposition \ref{prop:sampling_char}.

For the converse, suppose that $\Lambda$ is not a sampling set for
$A^p_\phi$.   We will show that there exists $(\Gamma, \psi) \in W(\Lambda, \phi)$, such that
$\Gamma$ is not a uniqueness set for $A^{\infty}_\psi$.

By Theorem \ref{thm_wiener_samp},  $\Lambda$ is not a sampling set for
$A^{\infty}_{\phi}$. This means that, for every  $j \in \Nst$,
there exists  $f_j \in A^{\infty}_\phi$ such that
$\Vert f_j \Vert_{\phi, \infty}=1$ and $\sup_{\lambda \in \Lambda} |f_j(\lambda)|
e^{-\phi(\lambda)} \leq 1/j$. We select a sequence $\{{\zeta_j}: j\geq 1\} \subseteq \Cn$ such that
$$
[f_j({\zeta_j})| e^{-\phi({\zeta_j})}= |T_{-{\zeta_j}} f_j(0)|
e^{-\phi_{-{\zeta_j}}(0)} \geq 1/2 \, ,
$$
where we have used property \eqref{eq:c3} for the translation
operator. We also have
$$
\sup_{\tau \in \Lambda- {\zeta_j} } |T_{-{\zeta_j}} f_j(\tau)| e^{-\phi_{-{\zeta_j}}(\tau)} \leq 1/j.
$$
By Montel's theorem, the growth bound \eqref{eq:growth_bound} and Proposition
\ref{prop:trans_lim}, we can pass to a subsequence and assume that the following hold: (i)
$\Lambda-{\zeta_j} \weakconv \Gamma$, (ii)  $\phi_{-{\zeta_j}} \to \psi$ uniformly on compact sets
for some plurisubharmonic $\psi$ satisfying \eqref{eq:subharm_bounds}, and (iii)
$T_{-{\zeta_j}} f_j \to f$ uniformly on compact sets for some holomorphic function $f$.

Clearly $f \in A^{\infty}_{\psi}$, $f(0) \neq 0$ and $f_{\mid \Gamma} = 0$. This shows that
$\Gamma$ is not a
uniqueness set for $A^{\infty}_\psi$.
\end{proof}

\subsection{Interpolation and uniqueness}
Interpolating sets can also be characterized with weak limits.
For our purposes, we will need the following technical variation of
\cite[Lemma 5.6]{grorro15}.

\begin{lemma} \label{lem:continuity}
Assume that $\phi$ satisfies \eqref{eq:subharm_bounds} and \eqref{eq_a1}.
Let $\{\Lambda_j: j \geq 1\}$ be a family of separated sets with a uniform separation constant,
i.e.
$$
\inf_j \sep(\Lambda_j)=
\inf\{
\abs{\lambda-\lambda'} :\,
\lambda,\lambda' \in \Lambda_j, \lambda \not= \lambda',
j \geq 1\}
> 0.
$$
Let $c^j \in \ell^{\infty}(\Lambda_j)$, $j \geq 1$, be sequences with $\Vert c^j
\Vert_{\infty}=1$ such that
\begin{equation} \label{eq:zerolimit}
\Vert \sum_{\lambda \in \Lambda_j} c^j_\lambda e^{-\phi(\lambda)} K_{\phi, \lambda}
\Vert_{\phi, \infty} \to 0
\end{equation}
as $j \to \infty$. Then there exists a subsequence
$\{j_k: k \geq 1\} \subseteq \mathbb{N}$; points
$\lambda_{j_k} \in \Lambda_j$; a separated set $\Gamma \subseteq \Cn$;
a nonzero sequence $c \in \ell^{\infty}(\Gamma)$;
and a plurisubharmonic function $\psi$
satisfying the bounds \eqref{eq:subharm_bounds} such that
(i) $\phi_{\lambda_{j_k}} \to \psi$ uniformly on compact sets; (ii) $\Lambda_{j_k}-
\lambda_{j_k} \weakconv \Gamma$; and (iii) the following relation holds
\begin{equation}
\label{eq_conc}
\sum_{\gamma \in \Gamma} c_{\gamma}e^{-\psi(\gamma)} K_{\psi, \gamma} = 0.
\end{equation}
\end{lemma}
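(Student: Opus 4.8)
The strategy is a compactness-and-diagonalization argument, normalizing each near-null combination at a point where its mass is concentrated and then passing to the limit. First I would normalize: for each $j$, choose $\lambda_j \in \Lambda_j$ such that $\abs{c^j_{\lambda_j}}$ is close to the supremum $\norm{c^j}_\infty = 1$, say $\abs{c^j_{\lambda_j}} \geq 1/2$. This is the natural choice of ``center'' because it guarantees that the limiting sequence $c$ is nonzero: after translating, the coefficient sitting at the origin will have modulus at least $1/2$ in the limit. Next, translate everything by $\lambda_j$. Using the covariance formula \eqref{eq_cov} for the translation operators $T_{\lambda_j}$, the hypothesis \eqref{eq:zerolimit} is equivalent, up to unimodular factors on the coefficients, to
\begin{align*}
\Bignorm{\sum_{\mu \in \Lambda_j - \lambda_j} \tilde c^j_\mu e^{-\phi_{\lambda_j}(\mu)} K_{\phi_{\lambda_j},\mu}}_{\phi_{\lambda_j},\infty} \to 0,
\end{align*}
where $\tilde c^j$ is a renormalization of $c^j$ (translated and multiplied by unimodular scalars, so still $\norm{\tilde c^j}_\infty = 1$ with $\abs{\tilde c^j_0} \geq 1/2$).

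\textbf{Passing to the limit.} Now I would extract a subsequence $\{j_k\}$ along which, simultaneously: (i) $\phi_{\lambda_{j_k}} \to \psi$ uniformly on compact sets with $\psi$ plurisubharmonic satisfying \eqref{eq:subharm_bounds}, by Proposition~\ref{prop:trans_lim} (which also gives $K_{\phi_{\lambda_{j_k}}} \to K_\psi$ locally uniformly); (ii) $\Lambda_{j_k} - \lambda_{j_k} \weakconv \Gamma$ for some set $\Gamma$, using that the sets $\Lambda_j - \lambda_j$ are uniformly separated (hence relatively separated with a uniform constant), so a weak limit exists along a subsequence and $\Gamma$ is separated with $\sep(\Gamma) \geq \inf_j \sep(\Lambda_j) > 0$; and (iii) for each $\gamma \in \Gamma$, picking $\mu^{j_k}_\gamma \in \Lambda_{j_k} - \lambda_{j_k}$ with $\mu^{j_k}_\gamma \to \gamma$, the scalars $\tilde c^{j_k}_{\mu^{j_k}_\gamma}$ converge to some limit $c_\gamma$ (bounded by $1$); by a further diagonal extraction this can be arranged for all $\gamma \in \Gamma$ at once. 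Since $0 \in \Lambda_{j_k} - \lambda_{j_k}$ always, $0 \in \Gamma$ and $\abs{c_0} = \lim \abs{\tilde c^{j_k}_0} \geq 1/2 > 0$, so $c \neq 0$.

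\textbf{Identifying the limit relation.} It remains to show $\sum_{\gamma \in \Gamma} c_\gamma e^{-\psi(\gamma)} K_{\psi,\gamma} = 0$, i.e.\ that this $A^\infty_\psi$-function vanishes. I would evaluate at an arbitrary point $z$ and show the value is $0$ by comparing with the $j_k$-th combination evaluated near $z$. Split the sum over $\Gamma$ into the finitely many points in a large ball $B_R(0)$ and the tail; on the ball, local uniform convergence of $K_{\phi_{\lambda_{j_k}},\mu} \to K_{\psi,\gamma}$ (together with $\mu^{j_k}_\gamma \to \gamma$, $\phi_{\lambda_{j_k}} \to \psi$, and $\tilde c^{j_k}_{\mu^{j_k}_\gamma} \to c_\gamma$) handles the main term. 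For the tail, the off-diagonal decay \eqref{eq:offdiag-bound} gives $\abs{e^{-\psi(\gamma)}K_{\psi,\gamma}(z)} e^{-\psi(z)} \lesssim e^{-c\abs{z-\gamma}}$, and since $\Gamma$ is separated the tail sum $\sum_{\gamma \notin B_R} e^{-c\abs{z-\gamma}}$ is small for $R$ large, uniformly; the same bound applies to the truncated $j_k$-th sums uniformly in $k$, using the uniform separation and the uniform off-diagonal decay (valid for all the weights $\phi_{\lambda_{j_k}}$ with the same constants). Combining: the value of the limit function at $z$ is the limit of $\bigl(\sum_{\mu \in \Lambda_{j_k}-\lambda_{j_k}} \tilde c^{j_k}_\mu e^{-\phi_{\lambda_{j_k}}(\mu)}K_{\phi_{\lambda_{j_k}},\mu}\bigr)(z)$, which by \eqref{eq:zerolimit} (in its translated form) tends to $0$. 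Hence \eqref{eq_conc} holds.

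\textbf{Main obstacle.} The delicate point is the uniform-in-$k$ control of the tails: one needs the off-diagonal estimate \eqref{eq:offdiag-bound} with constants independent of $k$ for the whole family $\{\phi_{\lambda_{j_k}}\}$ (which holds since these constants depend only on $m, M$ in \eqref{eq:subharm_bounds}), combined with the uniform separation of the sets $\Lambda_{j_k} - \lambda_{j_k}$ to bound $\sum_\gamma e^{-c\abs{z-\gamma}}$ by a convergent geometric-type series independently of $k$ and of $z$. Without the uniform separation hypothesis this interchange of limits would fail, which is precisely why that assumption is built into the statement.
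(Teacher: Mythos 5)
Your proposal is correct, and its skeleton coincides with the paper's: choose $\lambda_j$ with $\abs{c^j_{\lambda_j}}\geq 1/2$, translate via the covariance formula \eqref{eq_cov}, invoke Proposition~\ref{prop:trans_lim} and uniform separation to extract the limits $\psi$ and $\Gamma$, and observe that the coefficient surviving at the origin keeps $c$ from vanishing. The genuine divergence is in how the limit identity \eqref{eq_conc} is established. The paper packages the translated coefficients into the measures $\mu_j=\sum_\lambda d^j_\lambda\delta_\lambda$, extracts a weak$^*$ limit $\mu$ in the duality $\sigma(W(\mathcal{M},L^\infty),W(C_0,L^1))$ (which automatically yields $\supp\mu\subseteq\Gamma$ and $c\in\ell^\infty(\Gamma)$ via cited lemmas from \cite{grorro15}), and then proves $\mu(f_w)=0$ by the three-term splitting $\mu(f_w)=\mu_j(f_{j,w})+\mu_j(f_w-f_{j,w})+(\mu-\mu_j)(f_w)$, with the middle term controlled by convergence $f_{j,w}\to f_w$ in $W(C_0,L^1)$. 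You instead define $c_\gamma$ by a pointwise diagonal extraction over the approximating points $\mu^{j_k}_\gamma\to\gamma$ and verify the identity at each fixed $z$ by a ball-plus-tail decomposition, with the tail governed by the uniform off-diagonal decay \eqref{eq:offdiag-bound} and the uniform separation. Both routes rest on exactly the same two uniform inputs (kernel decay with constants depending only on $m,M$, and uniform separation); yours is more elementary and self-contained, at the cost of some bookkeeping the amalgam-space duality hides — chiefly the eventual bijection between $(\Lambda_{j_k}-\lambda_{j_k})\cap B_R(0)$ and $\Gamma\cap B_R(0)$ up to boundary points, whose contribution must be absorbed into the tail estimate. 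That detail is routine given separation, so I regard your argument as complete in substance.
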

\begin{proof}
For each $j$, we select $\lambda_j \in \Lambda_j$ such that
$|c_{\lambda_j}^j|
\geq 1/2$.   Using \eqref{eq_cov}, we can
rewrite the condition \eqref{eq:zerolimit} as
\begin{equation}
\label{eq_tends}
\begin{aligned}
& \sup_{w \in \Cn} \bigg| \sum_{\lambda \in \Lambda_j - \lambda_j} c^j_{\lambda+ \lambda_j}
e^{i \mathrm{Im}(q(\lambda, -\lambda_j))} K_{\phi_{-\lambda_j}}(w,\lambda)
e^{-\phi_{-\lambda_j}(\lambda)-\phi_{-\lambda_j}(w)} \bigg| \\
&\qquad= \Vert  \sum_{\lambda \in \Lambda_j - \lambda_j} c^j_{\lambda+ \lambda_j} e^{i
\mathrm{Im}(q(\lambda, -\lambda_j))} K_{\phi_{-\lambda_j}, \lambda}
e^{-\phi_{-\lambda_j}(\lambda)} \Vert_{\phi_{-\lambda_j}, \infty}
\\
&\qquad=\Vert \sum_{\lambda \in \Lambda_j} c^j_\lambda e^{-\phi(\lambda)} K_{\phi, \lambda}
\Vert_{\phi, \infty} \to 0.
\end{aligned}
\end{equation}
Since the sets $\Lambda^j - \lambda_j$ are uniformly separated, by passing to a subsequence,
we may find a separated set $\Gamma$ such that $\Lambda^j - \lambda_j \weakconv \Gamma$ -
see e.g. \cite[Section 4]{grorro15}.
Now define the sequences
$$
d^j :=
\left(c^j_{\lambda+ \lambda_j} e^{i \mathrm{Im}(q(\lambda, -\lambda_j))}
\right)_{\lambda \in \Lambda_j- \lambda_j}
\in
\ell ^{\infty}(\Lambda_j- \lambda_j)
$$
and consider the associated measure
$$
\mu_j := \sum_{\lambda \in \Lambda_j- \lambda_j} d^j_\lambda \delta_{\lambda}.
$$
These measures  satisfy
$\norm{\mu_j}_{W(\mathcal{M},L^\infty)} \lesssim \rel(\Lambda_j- \lambda_j) \norm{d}_\infty
\lesssim 1$. Thus, by passing to a subsequence, there exists a measure $\mu \in
W(\mathcal{M},L^\infty)$ such that
$\mu_j \rightarrow \mu$ in the $\sigma(W(\mathcal{M},L^\infty), W(C_0,L^1))$-topology.
As shown in \cite[Lemma 4.3]{grorro15}, it follows that $\supp(\mu)
\subseteq \Gamma$, so that we
may write
$$
\mu = \sum_{\gamma \in \Gamma} c_{\gamma} \delta_{\gamma}.
$$
In addition, $\norm{c}_\infty \lesssim \norm{\mu}_{W(\mathcal{M},L^\infty)} < \infty$ by \cite[Lemma 4.6]{grorro15}.
By Proposition \ref{prop:trans_lim},
we may  pass to a further subsequence such that  $\phi_{-\lambda_j} \to \psi$ for some
plurisubharmonic $\psi$
satisfying  \eqref{eq:subharm_bounds} and \eqref{eq:growth_bound}.
By construction $0 \in \Gamma$
and $c(0) = \lim _{j \to \infty} d_{0}^j \neq 0$, therefore  $\mu$ is not identically zero.

Let
$$f_{j,w} (z):= K_{\phi_{-\lambda_j}}(z,w)e^{-\phi_{-\lambda_j}(z)-\phi_{-\lambda_j}(w)}
$$
and
$$
f_w(z) := K_{\psi}(z,w)e^{-\psi(z)-\psi(w)}
$$
be the modified reproducing kernels of $A^2_{\phi _{-\lambda _j}}$ and
$A^2_{\psi }$.
The kernels $K_{\phi_{-\lambda_j}}$ and $K_{\psi}$ satisfy the off-diagonal estimate
\eqref{eq:offdiag-bound} with uniform constants. This fact implies
that $f_{j,w}$ and $f_w$ belong to $W(C_0, L^1)(\mathbb{R}^{2n})$.

With this notation, \eqref{eq_conc} can be recast in terms of the
measure $\mu$  as the statement that $\mu (f_w) = 0$ for all $w$. We
now show that this  is indeed the
case.

Let $w \in \Cn $ and write
$$
\mu(f_w) = \mu_j(f_{j,w}) + \mu_j(f_w-f_{j,w}) + (\mu- \mu_j)(f_w).
$$
The first term tends to zero by our assumption \eqref{eq_tends}, and  the third term tends to zero by the
weak
convergence of $\mu_j$ to $\mu$.

For the second term, we use
Proposition \ref{prop:trans_lim} which says  that $f_{j,w}
\to f_w$
uniformly on compacts, possibly after passing to a further
subsequence. In addition, the uniform off-diagonal decay of the
reproducing kernels
\eqref{eq:offdiag-bound} implies that
$$
| f_w(z)- f_{j,w} (z)| \leq C e^{-c|z-w|},
$$
for some constants $c,C>0$ that are independent of $j$. This
localization estimate and  the uniform   convergence on compact sets
imply  that
$f_{j,w} \rightarrow f_w$ in $W(C_0,L^1)$ as $j \rightarrow \infty$.
Since the weakly convergent sequence $\{\mu_j:j\geq 1\}$ is bounded,
we obtain that
\begin{align*}
\abs{\mu_j(f_w-f_{w,j})} \leq \sup_{j'}
  \norm{\mu_{j'}}_{W(\mathcal{M},L^\infty)} \,
\norm{f_w-f_{w,j}}_{W(C_0,L^1)} \rightarrow 0,
\quad \mbox{as } j\rightarrow \infty \, .
\end{align*}
We have proved that $\mu (f_w) = 0$ for all $w\in \Cn$, which is
\eqref{eq_conc}.
\end{proof}
 As in \cite[Theorem 5.4]{grorro15}, this lemma can also  be used to prove a sharp necessary density condition
for interpolating sets.

\section{Stability of sampling and interpolation under Lipschitz deformations}
\label{th_stab}
We now prove the main result on deformation of sampling and of interpolating sets.
\begin{proof}[Proof of Theorem \ref{th_lip_main}]
By Proposition \ref{prop_a1} we may assume, without loss of generality,
that $\phi$ satisfies the growth condition~\eqref{eq_a1}.

{\em Part (a).} If $\Lambda$ is a sampling set for $A^p_\phi$,
 then $\Lambda $ is also a sampling set for  $A^\infty_\phi$ by
 Theorem \ref{thm_wiener_samp}. We will prove that $\Lambda _j$ is a
 sampling set for $A^\infty _\phi$ for $j \geq j_0$. Applying
 Theorem \ref{thm_wiener_samp} once more, $\Lambda _j$
 is then a sampling set for $A^p_\phi $ for all  $j\geq j_0$.

To prove the claim, we argue by contradiction and, by passing to a subsequence,
assume that none of
the $\Lambda _j$ is a sampling set for $A^\infty _\phi $. Then there exists a sequence of functions $f_j \in
A^{\infty}_\phi$ such that $\Vert f_j \Vert_{\infty, \phi} = 1$ and
$\sup_{z \in \Lambda_j} [f_j(z)|e^{-\phi(z)} \leq 1/j$.  Let ${\zeta_j} \in \bC$ be such that
$|f_j({\zeta_j})|e^{-\phi({\zeta_j})} \geq 1/2$. By Proposition \ref{prop:trans_lim},
we pass to a
subsequence, and assume that (i) the
translates $\phi_{-{\zeta_j}}$ converge to a plurisubharmonic function
$\psi$ uniformly on compact sets,
and that (ii) there exists  a separated set $\Gamma$ such that
$\Lambda_j - {\zeta_j} \weakconv \Gamma$.
See, e.g., \cite[Section 4]{grorro15}.  Since $\Lambda$ is relatively
dense by Corollary \ref{cor_rd} and Theorem \ref{thm_wiener_samp},  Lemma \ref{lem:weaklim_lip} is
applicable and implies that  $\Gamma \in W(\Lambda)$.

By construction,  the translates $T_{-{\zeta_j}} f_j(z)$ satisfy the
following:
\begin{align}
  \Vert T_{-{\zeta_j}} f_j \Vert_{\phi_{-{\zeta_j}}, \infty} &= \Vert
                               f_j \Vert_{\phi, \infty}\, , \notag\\
   |T_{-{\zeta_j}}f_j(0)|e^{-\phi_{-{\zeta_j}}(0)} &= |f_j({\zeta_j})|
  e^{-\phi({\zeta_j})}  \geq 1/2 \, , \label{c5}\\
 \sup_{z \in \Lambda_j - {\zeta_j}} |T_{-{\zeta_j}}
  f_j(z)|e^{-\phi_{-{\zeta_j}}(z)} &= \sum_{z\in \Lambda _j}
                                                     |f_j(z)|^{-\phi (z)}     \leq 1/j \, . \label{c6}
\end{align}
It follows from the growth estimate \eqref{eq_a1} that the sequence $T_{-{\zeta_j}} f_j$
is uniformly bounded on compact sets. Therefore,  Montel's theorem
guarantees the existence of a subsequence converging uniformly on
compact sets to a holomorphic
function $f$, which
is not identically zero by \eqref{c5}. Clearly, the function $f$
belongs to $A^{\infty}_{\psi}$, and \eqref{c6} implies
that $f$ vanishes on $\Gamma \in W(\Lambda)$. By Proposition
\ref{th_ch_samp}, $\Lambda $ is not a sampling set, which contradicts
the assumption. Consequently, $\Lambda _j$ must be a sampling set for $A^\infty _\phi $,
for all for sufficiently large $j$.

{\em Part (b).}
If $\Lambda$ is an interpolating set for $A^p_\phi $,
 then $\Lambda $ is also an interpolating set for  $A^1_\phi$ by
 Theorem \ref{thm_wiener_samp}. We will prove that $\Lambda _j$ is an
 interpolating  set for $A^1 _\phi$ for $j \geq j_0$. Applying
 Theorem \ref{thm_wiener_samp} once more, $\Lambda _j$
 is then an interpolating  set for $A^p_\phi $  for $j\geq j_0$.

Again, we argue by contradiction,
and assume, without loss of generality, that none of the  $\Lambda_j$ is an
interpolating set for $A^1_\phi$. This means that, for every $j\in \Nst
$, the operator
\begin{align*}
A^1_\phi \longrightarrow \ell^1(\Lambda_j), \qquad
f \longmapsto f|_{\Lambda_j}
\end{align*}
fails to be surjective. By duality, it follows that the operator
\begin{align*}
\ell^\infty (\Lambda_j) \longrightarrow  A^\infty _\phi, \qquad
c \longmapsto \sum_\lambda c_\lambda e^{-\phi(\lambda)} K_{\phi ,
  \lambda }
\end{align*}
is not bounded below.
Therefore there exist
sequences $c^j \in \ell^{\infty}(\Lambda_j)$ such that $\Vert c^j \Vert_{\infty} =1$ and
$$
\Vert \sum_{\lambda \in \Lambda_j} c^j_{\lambda} e^{-\phi(\lambda)} K_{\phi, \lambda}
\Vert_{\phi, \infty} \to 0.
$$
Note that $\Lambda$ is separated by Corollary \ref{cor_int_set}.
Since $\Lambda_j \lipconv \Lambda$, by \cite[Lemma 6.7]{grorro15},
we can pass to a further subsequence, and assume that
$\{\Lambda_j: j \geq 1\}$ is uniformly separated. Therefore the assumptions
of  Lemma \ref{lem:continuity} are satisfied. Using
Lemma~\ref{lem:continuity} and  passing to
a further subsequence, we find points $\lambda_j \in \Lambda_j$;
a limiting weight $\phi_{\lambda_j} \longrightarrow \psi$;
a separated set $\Gamma$ such that $\Lambda_j
- \lambda_j \weakconv \Gamma$;
and a nonzero sequence $c \in \ell^{\infty}(\Gamma)$
such that
\begin{equation*}
\sum_{\gamma \in \Gamma} c_\gamma e^{-\psi(\gamma)} K_{\psi, \gamma} =0.
\end{equation*}
Thus, for every $f \in A^1_\psi$,
\begin{equation*}
0=\sum_{\gamma \in \Gamma} c_\gamma e^{-\psi(\gamma)} \ip{f}{K_{\psi, \gamma}}
=\sum_{\gamma \in \Gamma} c_\gamma e^{-\psi(\gamma)} f(\gamma).
\end{equation*}
Since $c \not\equiv 0$, it follows that $\Gamma$ is not an interpolating set
for $A^1_\psi$.
Since  $\Lambda$ is separated, Lemma \ref{lem:weaklim_lip} is
applicable to $\Lambda$ and asserts that $\Gamma \in W(\Lambda)$.
By Proposition
\ref{prop:weaklimit_interp}, $\Lambda$ is not an interpolating set, which contradicts
the assumption. Consequently, $\Lambda_j$ must be an interpolating
set for $A^1 _\phi $
for all sufficiently large $j$.
\end{proof}

As a corollary of the stability with respect to Lipschitz deformations
we show the result announced in the introduction:
sampling and interpolating sets cannot attain the critical density.

\subsection{Proof of Theorem \ref{th_intro}}
\label{sec_th_intro}
By Theorem \ref{thm_wiener_samp}, we can restrict our attention
to $p=2$.
Suppose that $\Lambda$ is a sampling set for $A^2_\phi$ with ${D^{-}_\phi}(\Lambda) \leq 1$,
and consider $\Lambda_j := (1+1/j) \Lambda$. Then $\Lambda_j \lipconv \Lambda$, and by Theorem
\ref{th_lip_main}, $\Lambda_j$ is sampling set for $A^2_\phi$  for
sufficiently large $j$.
The bounds on the diagonal of the reproducing kernel
\eqref{eq_diag} and \eqref{eq:offdiag-bound} imply that ${D^{-}_\phi}(\Lambda_j)<1$
- see Lemma \ref{lemma_xx} below - which contradicts Theorem \ref{th_non_strict}. The
statement about interpolation follows similarly.
$\qed$

\begin{lemma}
\label{lemma_xx}
Assume that $\phi$ satisfies \eqref{eq:subharm_bounds}. Let $\Lambda \subseteq \Cn$ and $a>1$. Then
${D^{-}_\phi}(a \Lambda) < {D^{-}_\phi}(\Lambda)$
and ${D^{+}_\phi}(a \Lambda) < {D^{+}_\phi}(\Lambda)$.
\end{lemma}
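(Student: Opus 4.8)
It is enough to prove the statement for the lower density; the argument for $D^+_\phi$ is identical with suprema replacing infima and $\limsup$ replacing $\liminf$. Write $N=2n$ for the real dimension, abbreviate $d\nu(w):=K_\phi(w,w)e^{-2\phi(w)}\,dm(w)$, and recall from \eqref{eq_diag} that $c\,dm\le d\nu\le C\,dm$ with $0<c\le C<\infty$. The guiding idea is that passing from $\Lambda$ to $a\Lambda$ replaces the ball $B_r(z)$ occurring in the numerator of the density by the ``$a$ times smaller'' ball $B_{r/a}(z/a)$, while the denominator is still computed on the full ball $B_r(z)$; the room gained by the factor $a>1$ produces, via the two–sided bound for $\nu$, a uniform multiplicative loss.

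\textbf{Key steps.} First, the combinatorial identity $a\Lambda\cap B_r(z)=a\bigl(\Lambda\cap B_{r/a}(z/a)\bigr)$ gives $\#\bigl(a\Lambda\cap B_r(z)\bigr)=\#\bigl(\Lambda\cap B_{r/a}(z/a)\bigr)$, so after the substitution $\rho=r/a$, $w=z/a$,
\begin{equation*}
D^-_\phi(a\Lambda)=\liminf_{\rho\to\infty}\ \inf_{w\in\Cn}\ \frac{\#\bigl(\Lambda\cap B_\rho(w)\bigr)}{\nu\bigl(B_{a\rho}(aw)\bigr)}.
\end{equation*}
Second, for \emph{any} centre $\zeta\in\Cn$ and any $\rho>0$, splitting $B_{a\rho}(\zeta)=B_\rho(\zeta)\sqcup\bigl(B_{a\rho}(\zeta)\setminus B_\rho(\zeta)\bigr)$ and using $c\,dm\le d\nu\le C\,dm$ yields
\begin{equation*}
\nu\bigl(B_{a\rho}(\zeta)\bigr)\ \ge\ \nu\bigl(B_\rho(\zeta)\bigr)+c\,\omega_N\rho^N\bigl(a^N-1\bigr)\ \ge\ (1+\delta)\,\nu\bigl(B_\rho(\zeta)\bigr),\qquad \delta:=\tfrac{c}{C}\bigl(a^N-1\bigr),
\end{equation*}
because $\nu(B_\rho(\zeta))\le C\omega_N\rho^N$, where $\omega_N$ is the volume of the unit ball; crucially $\delta>0$ precisely because $a>1$. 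Third, one compares $\nu\bigl(B_{a\rho}(aw)\bigr)$ with $(1+\delta)\,\nu\bigl(B_\rho(w)\bigr)$: granting this, the two displays combine to $D^-_\phi(a\Lambda)\le\frac1{1+\delta}D^-_\phi(\Lambda)$, which is $<D^-_\phi(\Lambda)$ whenever $D^-_\phi(\Lambda)>0$ (the inequality being vacuous otherwise), proving the lemma; the same computation with suprema gives $D^+_\phi(a\Lambda)\le\frac1{1+\delta}D^+_\phi(\Lambda)$.

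\textbf{Main obstacle.} The third step is the delicate one: $B_{a\rho}(aw)$ and $B_\rho(w)$ are concentric only for $w=0$ and become genuinely unrelated once $|w|\gg\rho$, so the bound $c\,dm\le d\nu\le C\,dm$ alone does not bridge them — one cannot simply replace $B_{a\rho}(aw)$ by $B_{a\rho}(w)$. This is where the translation apparatus of Section~\ref{sec_ops} enters. Using the covariance identity $\nu_\phi\bigl(B_\rho(w)\bigr)=\nu_{\phi_{-w}}\bigl(B_\rho(0)\bigr)$ (exactly as in the proof of Proposition~\ref{prop_a1}), together with the relative density of the relevant sets (Corollary~\ref{cor_rd}) and the compactness of the translated weights (Proposition~\ref{prop:trans_lim}), one passes along a sequence $(\rho_k,w_k)$ nearly realizing the $\liminf\inf$ above to a weak limit of the translated data, which reduces the comparison to one between the \emph{concentric} balls $B_{\rho}(0)\subseteq B_{a\rho}(0)$ for a single limiting plurisubharmonic weight, where the annulus estimate of the second step applies directly. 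Unwinding the limits then yields the strict inequalities $D^\pm_\phi(a\Lambda)<D^\pm_\phi(\Lambda)$.
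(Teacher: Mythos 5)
Your Steps 1 and 2 are correct and coincide with the computation in the paper's proof: the identity $\#(a\Lambda\cap B_r(z))=\#\bigl(\Lambda\cap B_{r/a}(z/a)\bigr)$ and the annulus estimate $\nu(B_{ar}(\zeta))\ge(1+\delta)\nu(B_r(\zeta))$ for \emph{concentric} balls, with $\delta=\tfrac{c}{C}(a^{2n}-1)$, are exactly the two ingredients used there. The difference is that the paper substitutes $r\mapsto ar$ in the whole quotient without moving the centre of the denominator ball, so it only ever compares $B_r(z)\subseteq B_{ar}(z)$; your more careful change of variables produces the shifted ball $B_{a\rho}(aw)$ in the denominator, and your proof then stands or falls with Step 3.

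Step 3, however, is asserted rather than proved, and the mechanism you sketch cannot deliver it. First, translating $B_\rho(w)$ to the origin produces the weight $\phi_{-w}$, while translating $B_{a\rho}(aw)$ to the origin produces $\phi_{-aw}$; since $|aw-w|=(a-1)|w|$ is unbounded along a minimizing sequence, the two families of translated weights need not have the same subsequential limits, so you do not arrive at ``concentric balls for a single limiting weight'' --- you arrive at two concentric balls measured with respect to two unrelated limiting weights, which is the original difficulty in disguise. Second, Proposition~\ref{prop:trans_lim} yields convergence of the kernels only uniformly on \emph{compact} sets, whereas the quantities to be compared are integrals over balls of radii $\rho_k\to\infty$, which locally uniform convergence does not control. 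Third, Corollary~\ref{cor_rd} concerns sampling sets, while $\Lambda$ in the lemma is arbitrary, so relative density is not available. The missing inequality $\nu(B_{a\rho_k}(aw_k))\ge(1+\delta)\nu(B_{\rho_k}(w_k))$ along a near-minimizing sequence is precisely what your argument needs, and the two-sided bound $c\,dm\le d\nu\le C\,dm$ alone cannot give it for far-apart balls: a density equal to $C$ near $B_{\rho}(w)$ and to $c$ near $B_{a\rho}(aw)$ with $ca^{2n}<C$ reverses it. To complete the proof you must either justify the reduction to the concentric comparison --- which the paper's computation carries out tacitly and which your own analysis shows is the delicate point --- or supply a genuinely different argument for the de-centred comparison; the weak-limit apparatus of Sections~\ref{sec_ops}--\ref{sec_char} is not the right tool, since it captures local behaviour and the obstruction here lives at scale $\rho_k\to\infty$.
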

\begin{proof}
Using the bounds on the diagonal of the reproducing kernel
\eqref{eq_diag} and \eqref{eq:offdiag-bound}, we estimate
\begin{align*}
&\int_{B_{ar}(z) \setminus B_r(z)} K(w,w) e^{-2\phi(w)} dm(w)
\asymp m(B_{ar}(z) \setminus B_r(z)) \\
&\qquad= (a^{2n}-1)m(B_r(z))
\asymp (a^{2n}-1)
\int_{B_r(z)} K(w,w) e^{-2\phi(w)} dm(w).
\end{align*}
Hence,
\[\int_{B_{ar}(z)} K(w,w) e^{-2\phi(w)} dm(w) \geq
(1 + c) \int_{B_r(z)} K(w,w) e^{-2\phi(w)} dm(w),\]
for some $c=c(a)>0$. As a consequence,
\begin{align*}
{D^-_\phi}(a\Lambda) &=  \liminf_{r \to \infty} \inf_{z \in \Cn} \frac{ \# ( a\Lambda \cap B_r(z))} {
\int_{B_r(z)} K(w,w) e^{-2\phi(w)} dm(w)}
\\
&=  \liminf_{r \to \infty} \inf_{z \in \Cn} \frac{ \# ( \Lambda \cap B_{r/a}(z))} {
\int_{B_r(z)} K(w,w) e^{-2\phi(w)} dm(w)}
\\
&=  \liminf_{r \to \infty} \inf_{z \in \Cn} \frac{ \# ( \Lambda \cap B_r(z))} {
\int_{B_{ar}(z)} K(w,w) e^{-2\phi(w)} dm(w)}
\\
&\leq \frac{1}{1+c} {D^{-}_\phi}(\Lambda) < {D^{-}_\phi}(\Lambda).
\end{align*}
The statement about ${D^{+}_\phi}(\Lambda)$ follows similarly.
\end{proof}

\section{Localizable reproducing kernel Hilbert spaces}
\label{sec_loc}

This section treats the problem of sampling and interpolation in reproducing kernel Hilbert spaces with a
localized reproducing kernel. The results extend considerably those of the
theory of localized frames~\cite{gro04, bacahela06, su06, albakr08, bacala11} and may be of independent
interest.

\subsection{Wiener's lemma with localized subspaces}

\begin{theorem}
\label{th_wiener_sub}
Let $\Lambda, \Gamma \subseteq \Rdst$ be relatively separated, and
let $P: \ell^2(\Gamma) \to \ell^2(\Gamma)$ and
$A:\ell^2(\Gamma) \to \ell^2(\Lambda)$ be bounded operators, represented by matrices
$A=(A_{\lambda, \gamma})_{\lambda \in \Lambda, \gamma \in \Gamma}$ and 
$P = (P_{\gamma, \gamma'})_{\gamma,\gamma' \in \Gamma}$. Assume that
$P^2=P$ and that $A$ and $P$ satisfy the localization estimates:
\begin{align}
\label{eq_loc_A}
&\abs{A_{\lambda, \gamma}} \leq \env(\lambda-\gamma),
\qquad \lambda \in \Lambda, \gamma \in \Gamma,
\\
\label{eq_loc_P}
&\abs{P_{\gamma, \gamma'}} \leq \env(\gamma-\gamma'),
\qquad \gamma,\gamma' \in \Gamma,
\end{align}
for some $\env \in W(C_0,L^1)(\Rdst)$.

Let $p \in [1,\infty]$ and assume that $A$ is $p$-bounded below on the range of $P$, i.e.,
there exists $C_p >0$ such that
\begin{align}
\label{eq_loc_APp}
\norm{A Pc}_p \geq C_p \norm{P c}_p, \qquad c \in \ell^p(\Gamma).
\end{align}
Then there exist $C'>0$ such that for all $q \in [1,\infty]$:
\begin{align}
\label{eq_loc_APq}
\norm{A Pc}_q \geq C' \norm{P c}_q, \qquad c \in \ell^q(\Gamma).
\end{align}
Moreover, $C'$ is independent of $q$  and  depends only on $\env$, $C_p$, and upper bounds for the relative
separation of $\Lambda$ and $\Gamma$.
\end{theorem}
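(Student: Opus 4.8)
The plan is to reduce the statement to the invertibility, on a single $\ell^r$, of one infinite matrix lying in the algebra of matrices with off-diagonal decay, and then to invoke Wiener's lemma for that algebra. Let $\cA$ denote the class of all matrices $M=(M_{x,y})$, indexed by pairs of relatively separated subsets of $\Rdst$, whose entries obey $|M_{x,y}|\le\env(x-y)$ for some $\env\in\wtest$; a $\wtest$-majorant of the entries serves as a seminorm on $\cA$. We will freely use the standard properties of this class (see \cite{gro04,bacahela06,albakr08} and the references therein): $\cA$ is stable under sums, conjugate transposition and composition (for the last one, $\sum_{y}\env_1(x-y)\,\env_2(y-z)\lesssim(\widetilde\env_1*\widetilde\env_2)(x-z)$ together with $\wtest*\wtest\subseteq\wtest$ and the relative separation of the intermediate index set); every $M\in\cA$ is bounded on $\ell^q$ for all $q\in[1,\infty]$, with $\norm{M}_{\ell^q\to\ell^q}$ controlled only by the $\cA$-seminorm of $M$ and by upper bounds for the relative separations; an element of $\cA$ indexed by a single set which is invertible as an operator on $\ell^r$ for some $r\in[1,\infty]$ is invertible on every $\ell^q$ with inverse again in $\cA$ (the Wiener-type lemma, with norm-controlled inversion); and an idempotent $P\in\cA$ induces a coherent family of bounded projections on the scale $\{\ell^q\}_q$, so that $P(\ell^q)=P(\ell^2)\cap\ell^q$ and $\ker_{\ell^q}P=\ker_{\ell^2}P\cap\ell^q$. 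Granting this, it suffices to produce an operator $S\in\cA$ with $SAP=P$: indeed $S$ is then bounded on every $\ell^q$ with norm depending only on $\env$, $C_p$ and the relative separations, and for $c\in\ell^q(\Gamma)$ one gets $\norm{Pc}_q=\norm{SAPc}_q\le\norm{S}_{\ell^q\to\ell^q}\norm{APc}_q$, which is \eqref{eq_loc_APq} with $C':=\norm{S}_{\ell^q\to\ell^q}^{-1}$, independent of $q$.

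To construct $S$, put $Q:=AP\in\cA$. Since $P^2=P$ we have $QP=Q$ and hence $Q^*QP=Q^*Q$, and we form the self-adjoint matrix
\[
R:=Q^*Q+(I-P^*)(I-P)\in\cA .
\]
As $(I-P)P=0$, it satisfies the identity $RP=Q^*QP=Q^*Q$. Consequently, if $R$ is invertible on $\ell^2$, then $R^{-1}\in\cA$ by the Wiener-type lemma, so $S:=R^{-1}Q^*\in\cA$ and
\[
SAP=SQ=R^{-1}Q^*Q=R^{-1}RP=P ,
\]
as required. The whole matter thus reduces to the invertibility of $R$ on $\ell^2$.

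This is where the hypothesis \eqref{eq_loc_APp} enters, and it is the crux of the proof. For $p=2$ it is immediate: $R$ is positive semidefinite, and writing $c=Pc+(I-P)c=:c_1+c_2$ one has $Qc=Qc_1$ and $(I-P)c=c_2$, whence, by Cauchy--Schwarz and $\norm{Qc_1}_2=\norm{APc_1}_2\ge C_2\norm{Pc_1}_2=C_2\norm{c_1}_2$,
\[
\langle Rc,c\rangle=\norm{Qc_1}_2^2+\norm{(I-P)c_2}_2^2\ \ge\ C_2^2\norm{c_1}_2^2+\norm{c_2}_2^2\ \gtrsim\ \norm{c}_2^2 ;
\]
since $R=R^*$, this makes $R$ invertible on $\ell^2$. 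The remaining difficulty --- and the genuine content of the lemma --- is to dispose of the restriction $p=2$, that is, to show that \eqref{eq_loc_APp} for one exponent forces it for the exponent $2$ (equivalently, that invertibility of $R$ is independent of the underlying $\ell^q$). The non-Hilbertian case is delicate because the pairing computation above has no $\ell^p$ analogue; I expect to handle it through the coherence of the idempotents $P,P^*\in\cA$ along the scale. The decomposition $Rc=Q^*Qc_1+(I-P^*)c_2$ takes values in the topological direct sum $P^*(\ell^p)\oplus\ker_{\ell^p}P^*$, with projection norms bounded in $\cA$, so that invertibility of $R$ on $\ell^p$ reduces to invertibility there of the two ``blocks'' $Q^*Q$ on $P(\ell^p)$ and $(I-P^*)(I-P)$ on $\ker_{\ell^p}P$; the second block involves only $P$ and is invertible on every $\ell^q$ by the coherent-projection property, while for the first block one combines \eqref{eq_loc_APp} with the unconditional identity $\ker_{\ell^2}Q^*\cap\mathrm{ran}_{\ell^2}Q=\{0\}$, transferred to $\ell^p$ by coherence. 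Tracking the $\cA$-seminorms through the norm-controlled Wiener-type lemma then yields that $C'$ depends only on $\env$, $C_p$ and the relative separation bounds, as claimed.
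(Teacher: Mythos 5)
Your construction of the left inverse $S=R^{-1}Q^{*}$ with $R=Q^{*}Q+(I-P^{*})(I-P)$ is correct as far as it goes: the algebra identities $RP=Q^{*}Q$ and $SAP=P$ are right, and if $R$ were known to be invertible on $\ell^{2}$, norm-controlled inversion in the Sj\"ostrand class would indeed yield \eqref{eq_loc_APq} for every $q$ with a constant of the stated form. In particular, when the hypothesis \eqref{eq_loc_APp} is given for $p=2$, your quadratic-form estimate does establish that invertibility, and you then have a complete (and genuinely different) proof of that special case.

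The gap is the case $p\neq 2$, which is the actual content of the theorem (in the paper it is invoked precisely to pass from an arbitrary $p$ to $q=1,2,\infty$). Your only quantitative input is \eqref{eq_loc_APp} on $\ell^{p}$, and the route you sketch for turning it into invertibility of $R$ does not close. The block $Q^{*}Q$ acting on $P(\ell^{p})$ is bounded below only if, in addition to $Q=AP$ being bounded below on $P(\ell^{p})$ (the hypothesis), the adjoint $Q^{*}$ is bounded below on the range of $Q$ inside $\ell^{p}(\Lambda)$; on $\ell^{2}$ this comes for free from self-duality, but on $\ell^{p}$ it is not implied by \eqref{eq_loc_APp} --- the identity $\ker Q^{*}\cap\overline{\operatorname{ran}Q}=\{0\}$ gives injectivity, not a lower bound. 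Likewise, bounded-belowness of $(I-P^{*})(I-P)$ on $\ker_{\ell^{p}}P$ for $p\neq 2$ is itself an instance of the statement being proved, so that step is circular (and the asserted coherence $P(\ell^{q})=P(\ell^{2})\cap\ell^{q}$ fails for $q>2$). The missing tool is the \emph{rectangular} version of Sj\"ostrand's Wiener lemma: a not necessarily square matrix in the class that is bounded below on $\ell^{p}$ for one $p$ is bounded below on $\ell^{q}$ for all $q$, with a controlled constant (\cite{sj95}, \cite[Prop.~A.1]{grorro15}). With that in hand the detour through invertibility of $R$ is unnecessary: the paper stacks $AP$ and $I-P$ into a single matrix $\tilde{A}c=(APc,(I-P)c)$ indexed over $(\Lambda\times\{0\})\cup(\Gamma\times\{1\})\subseteq\Rst^{d+1}$ so that both blocks share one envelope, observes that $\norm{\tilde{A}c}_{p}\geq C_{p}\norm{Pc}_{p}+\norm{(I-P)c}_{p}\gtrsim\norm{c}_{p}$ by \eqref{eq_loc_APp}, applies the rectangular lemma to get the lower bound on every $\ell^{q}$, and finally restricts to $c\in P(\ell^{q})$. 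You need to prove or quote that rectangular lemma; the square Wiener lemma you list is not enough.
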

\begin{proof}
When $P=I$ (identity) the result is (a slight extension of) Sj\"ostrand's version of Wiener's
lemma \cite{sj95} in the precise formulation of
\cite[Prop.~A.1]{grorro15}. To prove the result for general $P$,
we  consider the operator
\begin{align}
\tilde{A}:   \ell^2(\Gamma) \to \ell^2(\Lambda) \oplus \ell^2(\Gamma),
\qquad \tilde{A} c = \left( (AP)c, (I-P)c \right).
\end{align}
The matrices representing $AP$ and $I-P$ satisfy enveloping conditions similar to
\eqref{eq_loc_A}, \eqref{eq_loc_P}. To ease the notation,
we keep the same envelope and write:
\begin{align}
\label{eq_loc_Aa}
&\abs{(AP)_{\lambda, \gamma}} \leq \env(\lambda-\gamma),
\qquad \lambda \in \Lambda, \gamma \in \Gamma,
\\
\label{eq_loc_Pa}
&\abs{(I-P)_{\gamma, \gamma'}} \leq \env(\gamma-\gamma'),
\qquad \gamma,\gamma' \in \Gamma.
\end{align}
In order to apply Sj\"ostrand's Wiener-type lemma to $\tilde{A}$, we consider the following
augmented sets:
\begin{align*}
&\Lambda^* := \sett{(\lambda,0): \lambda \in \Lambda} \subseteq \Rst^{d+1},
\\
&\Gamma^* := \sett{(\gamma,1): \gamma \in \Gamma} \subseteq \Rst^{d+1},
\\
&\Upsilon^* := \Lambda^* \cup \Gamma^*.
\end{align*}
Then $\Lambda^*, \Gamma^*,\Upsilon^*$ are relatively separated. Under the identifications
$\ell^2(\Gamma^*) \cong  \ell^2(\Gamma)$
and $\ell^2(\Upsilon^*) \cong \ell^2(\Lambda) \oplus \ell^2(\Gamma)$,
the operator $\tilde{A}$ can be identified with the operator $B: \ell^2(\Gamma^*) \to
\ell^2(\Upsilon^*)$ with matrix entries:
\begin{align}
&B_{(\lambda,0), (\gamma,1)} := (AP)_{\lambda,\gamma},\\
&B_{(\gamma',1), (\gamma,1)} := (I-P)_{\gamma',\gamma}.
\end{align}
 Let $\eta \in C^\infty(\Rst)$ be a cut-off function supported on $[-2,2]$ such that $\eta
\equiv 1$ on $[-1,1]$, and  consider the augmented envelope $\tilde\env \in
W(L^\infty,L^1)(\Rst^{d+1})$ defined by
\begin{align*}
\tilde\env(x,t) := \env(x) \eta(t).
\end{align*}
Using \eqref{eq_loc_Aa} and \eqref{eq_loc_Pa} we see that $B$ satisfies the enveloping
condition:
\begin{align*}
&\abs{B_{(\lambda,0), (\gamma,1)}}
= \abs{(AP)_{\lambda,\gamma}}
\leq
\env(\lambda-\gamma)=\env(\lambda-\gamma)\eta(0-1)
=\tilde\env((\lambda,0)-(\gamma,1)),
\\
&\abs{B_{(\gamma',1), (\gamma,1)}} =
\abs{(I-P)_{\gamma',\gamma}} \leq
\env(\gamma'-\gamma)=\env{(\gamma'-\gamma)} \eta(1-1)
= \tilde\env((\gamma',1)-(\gamma,1)).
\end{align*}
Let us assume \eqref{eq_loc_APp} holds for a certain value of $p \in
[1,\infty]$. We now  estimate
for $c \in \ell^p(\Gamma)$
\begin{align*}
\norm{\tilde{A} c}_{\ell^p \oplus \ell^p}
= \norm{A P c}_{\ell^p} + \norm{(I-P)c}_{\ell^p}
\geq C_p \norm{P c}_{\ell^p} + \norm{(I-P)c}_{\ell^p}
\gtrsim \norm{c}_p;
\end{align*}
see Remark \ref{rem_P}.

Hence $\tilde{A}$ is $p$-bounded below, and therefore so is
$B$. By Sj\"ostrand's Wiener-type lemma
\cite{sj95},\cite[Prop.~A.1]{grorro15}, $B$  and therefore also
$\tilde{A}$  are  $q$-bounded below for
all $q \in [1,\infty]$. Finally, for $c \in \ell^q(\Gamma)$,
\begin{align*}
\norm{AP c}_q = \norm{\tilde{A} Pc}_q \geq C' \norm{Pc}_q,
\end{align*}
as desired. The dependence of the constant $C'$ is discussed in
\cite{grorro15}. In particular, $C'$ is independent of $q$.
\end{proof}
\begin{rem}
\label{rem_P}
{\rm The idempotent $P$ in Theorem \ref{th_wiener_sub}  need
  not  be an orthogonal projection.
The operator $P$ acts on all $\ell^p$ due to the enveloping condition
\eqref{eq_loc_P}, and it is still idempotent on $\ell^p$ by
density. In addition, $\norm{c}_p \asymp \norm{Pc}_p + \norm{(I-P) c}_p$.}
\end{rem}

\subsection{Localizable reproducing kernel Hilbert spaces}
\begin{definition}
\label{loc_subspace}
We say that a (closed) subspace $V \subseteq L^2(\Rdst)$
is a \emph{localizable reproducing kernel Hilbert space} (localizable
RKHS),  if there exist:
(a) a relatively separated $\Gamma \subseteq \Rdst$ (nodes);
(b) a function $\env \in W(L^\infty,L^1)(\Rdst)$ (envelope); and
(c) a frame for $V$, $\Func \equiv \{\func_\gamma: \gamma \in \Gamma\}$,
consisting of continuous functions that
satisfy the localization estimate
\begin{align}
  \label{eq_loc1}
  \abs{\func_\gamma(x)} \leq \env(x-\gamma), \qquad \gamma \in \Gamma.
\end{align}
\end{definition}
We now briefly describe some consequences of the definition. These are part of the abstract theory of
localized frames \cite{fogr05,gro04,bacahela06}. The concrete
application of this theory to the present
setting can be found in first sections of \cite{ro11} (where localizable RKHS are called
\emph{spline-type spaces}). See also \cite{su06} for closely related estimates.

The \emph{canonical dual frame}
$\widetilde\func \equiv \{\tilde\func_\gamma: \gamma \in \Gamma\} \subseteq V$ satisfies a
similar localization
estimate
\begin{align}
    \label{eq_loc2}
  \abs{\tilde\func_\gamma(x)} \leq \tilde\env(x-\gamma), \qquad \gamma \in \Gamma,
\end{align}
for some $\tilde\env \in W(L^\infty,L^1)(\Rdst)$. As a consequence of
the localization estimates \eqref{eq_loc1} and \eqref{eq_loc2}  all operators associated to the frame
$\Func$
obey the expected mapping properties. Specifically, the  \emph{coefficient maps}
$f \mapsto Cf := \left( \ip{f}{\func_\gamma} \right)_{\gamma \in
  \Gamma}$ and $f \mapsto \tilde C f := (
  \langle f, \tilde\func_\gamma \rangle )_{\gamma \in \Gamma}$, which are
bounded from $L^2(\Rdst) \to \ell^2(\Gamma)$ by the frame property,
extend to bounded operators $C, \tilde{C}:L^p(\Rdst) \to
\ell^p(\Gamma)$. Likewise the adjoint operators
\begin{align}
\label{eq_adjoints}
c \mapsto C^* c := \sum_{\gamma \in \Gamma} c_\gamma \func_\gamma
  \quad \text{ and } \,\,  c \mapsto \tilde{C}^* c := \sum_{\gamma \in
  \Gamma} c_\gamma \tilde \func_\gamma
\end{align}
can be extended to bounded operators  $C^*, \tilde{C}^*:
\ell^p(\Gamma) \to L^p(\Rdst)$, $1 \leq p \leq \infty$. Here both
series in \eqref{eq_adjoints} converge
unconditionally in $L^p(\Rdst  )$ for $p <\infty$  and in the weak$^*$-topology for $p=\infty$.

As a consequence, the range-space
\begin{align*}
V^p := C^*(\ell^p(\Gamma))=\left\{
\sum_{\gamma \in \Gamma} c_\gamma \func_\gamma:
c \in \ell^p(\Gamma) \right\} \subseteq L^p(\Rdst )
\end{align*}
is a  well-defined closed, complemented  subspace of $L^p(\Rdst)$ for $1\leq p\leq
\infty$. It follows that the dual space of $V^p$ is $V^{p'}$ where
$1\leq p<\infty$ and
$1/p + 1/p'=1$. These spaces are independent of the particular choice of the frame
$\Func$ in the sense that if two such frames satisfy \eqref{eq_loc1} and
expand the same Hilbert space $V$, then they will produce
the same range spaces $V^p$.
Each function $f \in V^p$ admits the two expansions
\begin{align}
f = C^* \tilde{C} f =
\sum_{\gamma \in \Gamma} \ip{f}{\func_\gamma} \tilde\func_\gamma
= \tilde{C}^* C f
= \sum_{\gamma \in \Gamma} \ip{f}{\tilde \func_\gamma} \func_\gamma \, ,
\end{align}
and the continuity of $C^*$ and $\tilde{C}$ implies that  $C: V^p \to \ell^p(\Gamma)$ is bounded below:
\begin{align}
\norm{Cf}_p \asymp \norm{f}_p, \qquad f \in V^p \, .
\end{align}
Furthermore,  $P=C \tilde{C}^*$ is a projection onto its closed range in $\ell
^p(\Lambda )$ and possesses a matrix representation with entries
$$
P_{\gamma', \gamma} =  \langle \tilde{\func_\gamma} ,
\func_{\gamma'} \rangle,
\qquad \gamma, \gamma' \in \Gamma \, .
$$

Finally, we remark that, on the subspace  $V^p$, the $L^p$ and
$W(L^\infty,L^p)$ norms are equivalent. Since  the
frame elements are continuous,  every function in $V^p$ is continuous,
and therefore
\begin{align}
\label{eq_incl}
V^p \subseteq W(C_0,L^p),
\end{align}
and the following sampling estimate holds:
\begin{align}
\label{eq_samp}
\norm{f|\Lambda}_{\ell^p(\Lambda)}
\lesssim \rel(\Lambda) \norm{f}_{p}, \qquad f \in V^p.
\end{align}
In particular, the \emph{sampling operator}
\begin{align}
\label{eq_samp_op}
S: V^p \to \ell^p(\Lambda),
\qquad
Sf := \left( f(\lambda) \right)_{\lambda \in \Lambda},
\end{align}
is bounded for every relatively separated set $\Lambda$.
(See e.g. \cite{su06, ro11} for proofs.)

The following lemma follows easily from the definitions.
\begin{lemma}
\label{lemma_wc}
Let $V \subseteq L^2(\Rdst)$ be a localizable RKHS.
Suppose that $f_n \longrightarrow f$ in $\sigma(V^\infty,V^1)$,
and $x_n \longrightarrow x$, with $f_n$, $f \in V^\infty$ and $x_n$, $x \in \Rdst$.
Then $f_n(x_n) \longrightarrow f(x)$.
\end{lemma}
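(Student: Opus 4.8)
The plan is to realize point evaluation on $V^\infty$ as a weak-$*$ continuous functional, i.e. as pairing against a fixed element of the predual $V^1$, and then to show that this representing element depends continuously on the point. First, for $x\in\Rdst$ I would set
\[
k_x:=\sum_{\gamma\in\Gamma}\overline{\func_\gamma(x)}\,\tilde\func_\gamma=\tilde C^*\big((\overline{\func_\gamma(x)})_{\gamma\in\Gamma}\big).
\]
Since $|\func_\gamma(x)|\le\env(x-\gamma)$ with $\env\in W(L^\infty,L^1)$ and $\Gamma$ relatively separated, the amalgam estimate $\sum_{\gamma\in\Gamma}\env(x-\gamma)\lesssim\rel(\Gamma)\,\norm{\env}_{W(L^\infty,L^1)}$ shows $(\func_\gamma(x))_\gamma\in\ell^1(\Gamma)$ with $\ell^1$-norm uniform in $x$; by boundedness of $\tilde C^*\colon\ell^1(\Gamma)\to L^1(\Rdst)$ (from the localization estimate \eqref{eq_loc2}) this gives $k_x\in V^1$ and $\sup_x\norm{k_x}_{V^1}<\infty$.

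The next step is the reproducing identity $f(x)=\ip{f}{k_x}$ for every $f\in V^\infty$, the bracket being the standard $L^1$--$L^\infty$ duality of $V^1$ with $V^\infty=(V^1)^*$. For this, note $|\ip{f}{\tilde\func_\gamma}|\le\norm{f}_{L^\infty}\norm{\tilde\env}_{L^1}$ uniformly in $\gamma$, so the frame expansion $f=\sum_\gamma\ip{f}{\tilde\func_\gamma}\func_\gamma$ recorded after Definition~\ref{loc_subspace} in fact converges absolutely and uniformly on compact sets; its sum is therefore continuous, coincides with $f$ in $L^\infty$, hence coincides with $f$ everywhere, and, recognizing the sum as an integral by Fubini (justified by the same absolute summability together with $f\in L^\infty$ and $\tilde\func_\gamma\in L^1$),
\[
f(x)=\sum_{\gamma\in\Gamma}\ip{f}{\tilde\func_\gamma}\func_\gamma(x)=\int_{\Rdst}f(y)\,\overline{k_x(y)}\,dy=\ip{f}{k_x}.
\]
I would also check that $x\mapsto k_x$ is continuous from $\Rdst$ into $V^1$: from $k_x-k_{x'}=\tilde C^*\big((\overline{\func_\gamma(x)-\func_\gamma(x')})_\gamma\big)$ it suffices that $(\func_\gamma(x'))_\gamma\to(\func_\gamma(x))_\gamma$ in $\ell^1(\Gamma)$ as $x'\to x$, which follows from continuity of each $\func_\gamma$ and, for $|x'-x|\le1$, domination of the coordinates by the fixed $\ell^1(\Gamma)$-sequence $\gamma\mapsto 2\sup_{|y-x|\le1}\env(y-\gamma)$ (summable by the amalgam property of $\env$ and the relative separation of $\Gamma$), via dominated convergence on $\ell^1(\Gamma)$.

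With these in hand the conclusion is immediate. A $\sigma(V^\infty,V^1)$-convergent sequence in the dual space $V^\infty=(V^1)^*$ is norm-bounded, so $M:=\sup_n\norm{f_n}_{V^\infty}<\infty$, and
\[
f_n(x_n)-f(x)=\ip{f_n}{k_{x_n}}-\ip{f}{k_x}=\ip{f_n}{k_{x_n}-k_x}+\ip{f_n-f}{k_x};
\]
the first term is bounded by $M\,\norm{k_{x_n}-k_x}_{V^1}\to0$ by the continuity just established, and the second tends to $0$ because $k_x\in V^1$ and $f_n\to f$ in $\sigma(V^\infty,V^1)$. Hence $f_n(x_n)\to f(x)$.

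The lemma is soft and I do not anticipate a genuine obstacle; the only point requiring a little care is matching, in the second step, the three incarnations of the frame expansion of an $f\in V^\infty$ — absolutely and uniformly convergent on compacta, convergent weak-$*$ in $L^\infty$, and paired through the $V^1$--$V^\infty$ duality — so that point evaluation at $x$ really is the weak-$*$ continuous functional $\ip{\cdot}{k_x}$. Everything else is routine bookkeeping with the amalgam estimate for $\env$ and the mapping properties of $C,\tilde C,C^*,\tilde C^*$ from the discussion following Definition~\ref{loc_subspace}.
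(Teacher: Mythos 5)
Your argument is correct, and in fact the paper offers no proof of this lemma at all (it is introduced with ``The following lemma follows easily from the definitions''), so you have supplied precisely the details the authors leave to the reader. Your route --- realizing point evaluation as pairing against $k_x=\sum_{\gamma}\overline{\func_\gamma(x)}\,\tilde\func_\gamma\in V^1$, checking $\sup_x\norm{k_x}_1<\infty$ and the $V^1$-continuity of $x\mapsto k_x$ from the amalgam estimate on $\env$, and then splitting $f_n(x_n)-f(x)=\ip{f_n}{k_{x_n}-k_x}+\ip{f_n-f}{k_x}$ with the uniform boundedness principle controlling the first term --- is the natural implementation of the definitions, and the one careful point you flag (reconciling the pointwise, locally uniform, and weak-$*$ readings of the expansion $f=\sum_\gamma\ip{f}{\tilde\func_\gamma}\func_\gamma$, using that every element of $V^\infty$ is continuous) is handled correctly.
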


\subsection{Universality of sampling and interpolating sets}
The universality of sampling and interpolation in weighted Fock spaces
in Theorem~\ref{thm_wiener_samp} is a special case of the following  much more
general statement about localizable RKHSs.
\begin{theorem}
\label{th_loc_ss}
Let $V \subseteq L^2(\Rdst)$ be a localizable RKHS and
let $\Lambda \subseteq \Rdst$ be relatively separated.

(i) If $\Lambda$ is a sampling set for $V^p$ for some $p \in
[1,\infty]$, then $\Lambda$ is a sampling set for
$V^q$ for all $q \in [1,\infty]$.

(ii) If  $\Lambda$ is an interpolating set for $V^p$ for some $p \in
[1,\infty]$, then $\Lambda$ is an interpolating
set for $V^q$ for all $q \in [1,\infty]$.
\end{theorem}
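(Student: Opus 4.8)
The plan is to recast both sampling and interpolation as the assertion that a certain infinite matrix, acting between sequence spaces indexed by the nodes $\Gamma$ and by the set $\Lambda$, is bounded below, and then to invoke the Wiener-type Lemma~\ref{th_wiener_sub}. Fix a frame $\{\func_\gamma : \gamma\in\Gamma\}$ realizing $V$ as a localizable RKHS, with canonical dual $\{\tilde\func_\gamma\}$, analysis operators $C,\tilde C$, synthesis operators $C^*,\tilde C^*$, and the localized idempotent $P=C\tilde C^*$ on $\ell^p(\Gamma)$, whose matrix has entries $P_{\gamma',\gamma}=\langle\tilde\func_\gamma,\func_{\gamma'}\rangle$. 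Recall that $f\mapsto Cf$ is a bijection from $V^p$ onto $\mathrm{range}(P)\subseteq\ell^p(\Gamma)$ with $\norm{f}_p\asymp\norm{Cf}_p$, uniformly in $p$, with inverse $\tilde C^*$. Since any envelope in $W(L^\infty,L^1)$ is dominated pointwise by a continuous one in $W(C_0,L^1)$, and convolutions of $W(C_0,L^1)$-envelopes again lie in $W(C_0,L^1)$, every matrix introduced below — whose entries are controlled by $\env$, $\tilde\env$, or their convolutions — satisfies the enveloping hypotheses~\eqref{eq_loc_A} and~\eqref{eq_loc_P} of Theorem~\ref{th_wiener_sub}.

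\emph{Part (i).} Put $A_{\lambda,\gamma}:=\tilde\func_\gamma(\lambda)$, so $\abs{A_{\lambda,\gamma}}\le\tilde\env(\lambda-\gamma)$, and note that $f=\tilde C^*Cf=\sum_\gamma(Cf)_\gamma\tilde\func_\gamma$ yields $f(\lambda)=(A\,Cf)_\lambda$; that is, the sampling operator factors as $S=A\circ C$ on $V^p$. The upper sampling inequality is automatic from~\eqref{eq_samp}, so $\Lambda$ is a sampling set for $V^p$ exactly when $\norm{A\,Cf}_p\gtrsim\norm{Cf}_p$ for all $f\in V^p$; since $Cf$ ranges precisely over $\mathrm{range}(P)$, this says $\norm{AP c}_p\gtrsim\norm{Pc}_p$ for all $c\in\ell^p(\Gamma)$, which is hypothesis~\eqref{eq_loc_APp}. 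Theorem~\ref{th_wiener_sub} upgrades this to~\eqref{eq_loc_APq} for every $q\in[1,\infty]$, and reading the equivalences backwards shows that $S\colon V^q\to\ell^q(\Lambda)$ is bounded below, i.e. $\Lambda$ is a sampling set for $V^q$.

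\emph{Part (ii).} By definition $\Lambda$ is interpolating for $V^p$ iff $S\colon V^p\to\ell^p(\Lambda)$ is onto. For $1\le p<\infty$ the closed range theorem makes this equivalent to $S^*$ being bounded below, while for $p=\infty$ one uses that $S$ is the adjoint of the pre-adjoint $S_*\colon\ell^1(\Lambda)\to V^1$ and requires $S_*$ to be bounded below. In every case the relevant operator is $c\mapsto Tc:=\sum_{\lambda\in\Lambda}c_\lambda K_\lambda$, where $K_\lambda\in V^q$ (for all $q$) is the reproducing kernel at $\lambda$, which is $\env$-localized around $\lambda$; in coefficients $CTc=Bc$ with $B_{\gamma,\lambda}=\langle K_\lambda,\func_\gamma\rangle=\overline{\func_\gamma(\lambda)}$, so $\abs{B_{\gamma,\lambda}}\le\env(\lambda-\gamma)$. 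Using $\norm{Tc}_{p'}\asymp\norm{Bc}_{p'}$ one obtains that $\Lambda$ is interpolating for $V^p$ iff $B$ is $p'$-bounded below on all of $\ell^{p'}(\Lambda)$. Applying Theorem~\ref{th_wiener_sub} with $P$ the identity — equivalently, Sj\"ostrand's Wiener lemma \cite{sj95}, \cite[Prop.~A.1]{grorro15} — shows that this property is independent of $p'$, whence $\Lambda$ is interpolating for all $V^q$.

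\emph{Main obstacle.} The delicate point is the bookkeeping in Part (ii) at the endpoints $p\in\{1,\infty\}$: one must choose the correct predual so that the transferred operator is genuinely $T$, and verify that $K_\lambda$ lies in every $V^q$ with a localization estimate uniform in $\lambda$. By contrast, Part (i) is essentially routine once one observes that the ``bounded below on $\mathrm{range}(P)$'' shape of~\eqref{eq_loc_APp} is exactly what is forced by working with a frame rather than a basis — this is precisely the feature Theorem~\ref{th_wiener_sub} was designed to handle.
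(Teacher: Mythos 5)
Your proposal is correct and follows essentially the same route as the paper: sampling is encoded as the lower bound $\norm{APc}_p\gtrsim\norm{Pc}_p$ with $A_{\lambda,\gamma}=\tilde\func_\gamma(\lambda)$ and $P=C\tilde C^*$, interpolation is dualized to a lower bound for the adjoint of the sampling map with $P=I$, and both are fed into Theorem~\ref{th_wiener_sub}. The only (immaterial) difference is that in part (ii) you transfer $S^*$ to coefficients via $C$, giving the matrix $\overline{\func_\gamma(\lambda)}$, whereas the paper uses $\tilde C$ and the matrix $\overline{\tilde\func_\gamma(\lambda)}$; both satisfy the required enveloping condition and both analysis maps are bounded below on $V^{p'}$.
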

\begin{proof}

Let $\func \equiv \{\func_\gamma: \gamma \in \Gamma\}$ be
a localized frame, as in Definition \ref{loc_subspace}.

  (i) Assume that $\Lambda$ is a sampling set for $V^p$ for some $p\in
  [1,\infty ]$ and  let $q \in [1,\infty]$. We must show that $\Lambda$ is a
sampling set for $V^q$.
In addition to  the orthogonal projection $P=C \tilde{C}^*$
 and the sampling operator $S$ from  \eqref{eq_samp_op},
we  define
$A = S \tilde{C}^*$. Written as a  matrix,  $A$ has the entries
\begin{align*}
     A_{\lambda, \gamma} = \widetilde{\func_\gamma}(\lambda),
\qquad \lambda \in \Lambda, \gamma \in \Gamma \, .
\end{align*}
By \eqref{eq_loc1} and \eqref{eq_loc2}  $A$ and $P$ satisfy the decay
estimates \eqref{eq_loc_A} and \eqref{eq_loc_P}.

In order to apply Theorem \ref{th_wiener_sub}, we will   verify the
stability condition \eqref{eq_loc_APp}. Given $c \in \ell^p(\Gamma)$, let
$f = \tilde{C}^* c \in V^p$. Since $\Lambda$ is a sampling set for
$V^p$, we have
$\norm{Sf}_p \asymp \norm{f}_p$. Using the properties of the operators
$C,\tilde{C}$, we obtain the estimate
\begin{align}
\norm{APc}_p&=\norm{S \tilde{C}^* C \tilde{C}^* c}_p
= \norm{S \tilde{C}^* C f}_p = \norm{S f}_p \notag \\
&\asymp \norm{f}_p \asymp \norm{C f}_p
= \norm{C \tilde{C}^* c}_p = \norm{P c}_p.  \label{c7}
\end{align}
At this point  we can invoke Theorem \ref{th_wiener_sub} and  conclude that
$\norm{APc}_q \gtrsim \norm{Pc}_q$ for all $q\in [1,\infty]$.

To show that $\Lambda$ is a sampling set for $V_q$, let  $f \in V_q$
and  $c := Cf$,
so that $f=\tilde{C}^* c$. We repeat  the estimates \eqref{c7} and
obtain
\begin{align*}
\norm{Sf}_q &= \norm{S (\tilde{C}^* C) f}_q
=\norm{S (\tilde{C}^* C) \tilde{C}^* c}_q
\\
&=\norm{(S \tilde{C}^*) (C \tilde{C}^*) c}_q
=\norm{A P c}_q\\
&\gtrsim \norm{P c}_q = \norm{C \tilde{C}^* C f}_q = \norm{Cf}_q
\asymp \norm{f}_q,
\end{align*}
as desired.

(ii) By definition, $\Lambda$ is an interpolating
set for $V^p$,  if
the sampling operator $S: V^p \to \ell^p(\Lambda)$ is surjective. This is the case if and only if
$S \tilde{C}^*: \ell^p(\Gamma) \to \ell^p(\Lambda)$ is surjective, which in turn holds if and
only if
$\tilde{C} S^*: \ell^{p'}(\Lambda) \to \ell^{p'}(\Gamma)$ is bounded below, where $1/p + 1/p'
=1$.
(For the case $p=\infty$, we use the fact the the operators are weak$^*$-continuous.)
The operator
$\tilde{C} S^*$ is represented by the matrix with entries
\begin{align*}
A^*_{\gamma, \lambda} = \overline{\widetilde{\func_\gamma}(\lambda)},
\qquad \lambda \in \Lambda, \gamma \in \Gamma.
\end{align*}
We invoke again Theorem \ref{th_wiener_sub} --- this time with $P=I$
--- and  conclude that
if $S^* \tilde{C}$ is $p$-bounded below for some $p \in [1,\infty]$, then it is $p$-bounded below
for all $p \in [1,\infty]$. This concludes the proof.
\end{proof}

\subsection{Sets close to being sampling and interpolating}
We say that a localizable RKHS is uniformly localizable if
\begin{align}
\label{eq_uq}
\sup_{\stackrel{x,y \in \Rdst}{\abs{x-y} \leq \delta}}
\sup_{\stackrel{f \in V^\infty}{\norm{f}_\infty \leq 1}}
\big|{\abs{f(x)}-\abs{f(y)}}\big|
\longrightarrow 0,
\mbox{ as } \delta \longrightarrow 0^+.
\end{align}
Note that the uniformity property concerns the \emph{absolute values} of the functions in
$V^\infty$. It is thus a weaker condition than the uniform equicontinuity of the ball of $V^\infty$. For 
comparison,
the equicontinuity of the ball of $V^\infty$ amounts to the condition
\begin{align}
\label{eq_str_unif}
\sup_{\stackrel{x,y \in \Rdst}{\abs{x-y} \leq \delta}} \norm{K_x-K_y}_1 \longrightarrow 0,
\mbox{ as } \delta \longrightarrow 0^+\,
\end{align}
for the reproducing kernel of $V^\infty$.
In several examples, in particular in spaces of analytic functions,
the uniformity property \eqref{eq_uq} is easier to verify than \eqref{eq_str_unif}.
See Proposition \ref{prop_loc_fock} below.

The following is an abstract version of the construction in \cite{Nir}. Sampling sets with close to critical
density can also be obtained from the general result in \cite{bacala11}.
\begin{theorem}
\label{th_fekete}
Let $V \subseteq L^2(\Rdst)$ be a uniformly localizable RKHS.
Then there exists a separated set $\Lambda \subseteq \Rdst$ with the following properties.

\begin{itemize}
\item[(i)] (\emph{$\ell^1-\ell^\infty$ interpolation}.) There exist a collection of functions
$\{l_\lambda: \lambda \in \Lambda\} \subseteq V^\infty$ such that
$\norm{l_\lambda}_\infty = 1$, and
\begin{align}
\label{eq_ll}
l_\lambda(\lambda') = \delta_{\lambda, \lambda'},
\qquad
\lambda, \lambda' \in \Lambda.
\end{align}
In particular, given  $a \in \ell^1(\Lambda)$,
$f = \sum_{\lambda \in \Lambda} a_\lambda l_\lambda \in V^\infty$ and satisfies
$f(\lambda)=a_\lambda$ for all $\lambda \in \Lambda$.

\item[(ii)] (\emph{$\ell^1-\ell^\infty$ sampling}.) For all $f \in V^1$
\begin{align}
\label{eq_int}
\norm{f}_\infty \leq \sum_{\lambda \in \Lambda} \abs{f(\lambda)}.
\end{align}
\end{itemize}
\end{theorem}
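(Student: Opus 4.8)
The plan is to construct $\Lambda$ as a sequence of ``discrete Fekete-type points'' obtained by a greedy packing procedure and to extract the interpolation and sampling properties from a maximality argument, exactly in the spirit of \cite{Nir}. First I would fix a small separation parameter $\delta>0$, to be chosen at the end depending on the uniform localization modulus in \eqref{eq_uq}, and enumerate $\Lambda=\{\lambda_1,\lambda_2,\dots\}$ inductively: having chosen $\lambda_1,\dots,\lambda_{k-1}$, pick $\lambda_k$ to be (a point realizing, up to a factor $2$, the supremum defining) a ``deepest hole'' — that is, a point $x$ maximizing the distance to $\{\lambda_1,\dots,\lambda_{k-1}\}$ subject to $\dist(x,\{\lambda_1,\dots,\lambda_{k-1}\})\ge\delta$, so that the construction terminates in an infinite $\delta$-separated set that is also relatively dense (every point of $\Rdst$ lies within some fixed radius of $\Lambda$, since otherwise the greedy step could still be performed). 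This gives the separated set; the content is that $\delta$-separatedness plus relative density, for $\delta$ small, already forces properties (i) and (ii).

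For part (i), for each $\lambda\in\Lambda$ I would take $l_\lambda$ to be a normalized reproducing kernel: set $l_\lambda := K_{\lambda}/\norm{K_\lambda}_\infty$ where $K$ is the reproducing kernel of $V^\infty$ — more precisely, since we want $l_\lambda(\lambda')=\delta_{\lambda,\lambda'}$ exactly and not just approximately, I would instead run a Neumann-series/fixed-point correction. Concretely, consider the ``interpolation matrix'' $T=(g_\lambda(\lambda'))_{\lambda,\lambda'\in\Lambda}$ built from the localized candidates $g_\lambda$ (normalized kernels), observe that by the localization estimate \eqref{eq_loc1} and $\delta$-separation, $T-I$ has operator norm on $\ell^\infty(\Lambda)$ bounded by $\sum_{\lambda\neq\lambda'}\env(\lambda-\lambda')$, which tends to $0$ as $\delta\to\infty$ — but here we need $\delta$ \emph{small}, so this crude bound is not enough and the uniform localization \eqref{eq_uq} must be used instead: near a point the functions look almost constant, so after renormalizing the off-diagonal is controlled by the modulus in \eqref{eq_uq} evaluated at scale $\delta$. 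Then $T$ is invertible on $\ell^\infty(\Lambda)$, and $l_\lambda := \sum_{\mu} (T^{-1})_{\mu,\lambda}\, g_\mu$ lies in $V^\infty$ (using that the $V^p$ spaces are $C^*(\ell^p)$ and $T^{-1}$ preserves $\ell^1$ by a Wiener-type argument as in Theorem~\ref{th_wiener_sub}), satisfies \eqref{eq_ll}, and can be renormalized to $\norm{l_\lambda}_\infty=1$. The final statement of (i), that $f=\sum a_\lambda l_\lambda\in V^\infty$ for $a\in\ell^1$, is then immediate from $\norm{l_\lambda}_\infty=1$ and $V^\infty=C^*(\ell^\infty)$.

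For part (ii), I would argue by contradiction using the greedy/maximality property of $\Lambda$ together with Lemma~\ref{lemma_wc}. If \eqref{eq_int} failed, there would be $f\in V^1$ with $\norm{f}_\infty=1$ but $\sum_\lambda|f(\lambda)|<1$; normalizing so that $|f(x_0)|$ is close to $1$ at some point $x_0$, the construction guarantees a lattice point $\lambda\in\Lambda$ with $|x_0-\lambda|\le C\delta$ (relative density), and by uniform localization \eqref{eq_uq} $|f(\lambda)|\ge 1 - \omega(C\delta) > \sum_\mu |f(\mu)| \ge |f(\lambda)|$, a contradiction once $\delta$ is chosen small enough that $\omega(C\delta)<1$. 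This is essentially the one-line heart of the matter; the preceding paragraphs just set up the machinery so that this estimate closes. The main obstacle I anticipate is the exact interpolation in (i): getting $l_\lambda(\lambda')=\delta_{\lambda,\lambda'}$ on the nose (rather than approximately) requires inverting the matrix $T$ on \emph{every} $\ell^p$ simultaneously while keeping the solutions in $V^\infty$, and the only clean way to do that is to feed $T-I$ (which satisfies an off-diagonal decay estimate with envelope $\env$ restricted to $\Lambda$) into the non-commutative Wiener lemma, Theorem~\ref{th_wiener_sub}, with $P=I$; verifying that $T$ is actually bounded below — not merely a small perturbation of $I$ — is where the uniform localizability hypothesis \eqref{eq_uq}, as opposed to mere localizability, is indispensable, and I expect the bookkeeping relating the modulus in \eqref{eq_uq} at scale $\delta$ to the norm of $T-I$ to be the fussiest part of the argument.
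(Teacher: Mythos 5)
Your construction of $\Lambda$ misses the essential point of the theorem, which is that the interpolation property with the sharp normalization $\norm{l_\lambda}_\infty=1$ and the sampling inequality \eqref{eq_int} with constant exactly $1$ must hold for the \emph{same} set; this forces $\Lambda$ to sit at the critical density of $V$, and no choice of your packing parameter $\delta$ can achieve that. Concretely: if $\delta$ is small, your greedy net is too dense for (i) --- by the uniform localization \eqref{eq_uq}, the conditions $\norm{l_\lambda}_\infty=1$, $l_\lambda(\lambda)=1$ and $l_\lambda(\lambda')=0$ force $1=\bigl|\abs{l_\lambda(\lambda)}-\abs{l_\lambda(\lambda')}\bigr|\leq\omega(\abs{\lambda-\lambda'})$, so such functions cannot exist once $\sep(\Lambda)$ is below the scale where $\omega<1$; moreover for such dense sets the matrix $T$ is far from the identity (nearby off-diagonal entries are close to $1$, not to $0$), so the Neumann series does not converge, and your proposed fix of ``renormalizing'' the interpolants to sup-norm $1$ destroys \eqref{eq_ll}, since after renormalization $l_\lambda(\lambda)\neq 1$. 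Conversely, if $\delta$ is large enough for (i) to be feasible, your argument for (ii) no longer gives \eqref{eq_int}: from $\abs{f(x_0)}\geq\norm{f}_\infty-\varepsilon$ and a node $\lambda$ with $\abs{x_0-\lambda}\leq C\delta$ you only obtain $\sum_\mu\abs{f(\mu)}\geq\norm{f}_\infty\,(1-\omega(C\delta))$, i.e.\ a sampling inequality with constant $(1-\omega(C\delta))^{-1}>1$, not $1$; your displayed chain $1-\omega(C\delta)>\sum_\mu\abs{f(\mu)}$ does not follow from the assumption $\sum_\mu\abs{f(\mu)}<1$. (Also, the greedy ``deepest hole'' step is ill-posed on all of $\Rdst$, where the supremum of the distance to a finite set is infinite.)

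The paper obtains both sharp properties simultaneously by a genuinely different device: Fekete (determinant-maximizing) configurations for an exhausting sequence of finite-dimensional subspaces $P_n\subseteq V^1$. The Lagrange functions $l^n_\lambda$ of such a configuration automatically satisfy $\norm{l^n_\lambda}_\infty=1$ by extremality of the determinant (moving a node to a point where $\abs{l^n_\lambda}>1$ would increase $\Delta$); the uniform separation then follows from \eqref{eq_uq} exactly as in the computation above; and the exact reproduction $f=\sum_{\lambda\in\Lambda^N}f(\lambda)\,l^N_\lambda$ for $f\in P_n\subseteq P_N$ yields \eqref{eq_int} with constant $1$ on a dense subspace. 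Weak limits of the configurations (as measures) and weak$^*$ limits of the Lagrange functions then produce $\Lambda$ and $\{l_\lambda\}$. To salvage your approach you would need to replace greedy packing by an extremal selection principle that automatically locates the critical density --- which is precisely what the Fekete construction does.
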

\begin{proof}
{\em Step 1 (Construction of Fekete points and  the set $\Lambda $)}.
Without loss of generality we assume that $\dim V = \infty$.
Let $\{P_n: n \geq 1\} \subseteq V^1$ be a nested sequence of subspaces
such that $\dim P_n =n$ and $\bigcup_n P_n$ is dense in $V^1$.
Let $\{p^n_1,\ldots,p^n_n\}$ be an orthogonal basis of $P_n$.
Consider the functional
\[
\Delta(x_1,\ldots, x_{n}) = \abs{\det \left(p^n_i(x_j) \right)_{i,j=1\ldots n}},
\qquad x_j \in \Rdst,
\]
and let $(x^n_1,\ldots,x^n_n)$ be a maximizer of $\Delta$. We denote
the corresponding set of points by  $\Lambda^n = \{x_1^{n},\ldots,
x_{n}^{n}\}\subseteq \mathbb{R}^d $.
A  maximizer of $\Delta $  always  exists because, by \eqref{eq_incl}, every function in $V^1$ vanishes at infinity, and
therefore it is enough to maximize $\Delta$ on a suitable compact set.

We consider the Lagrange functions $l_1^{n}, \cdots, l_{n}^{n}$ defined
as
\[
 l_{x^n_i}(x) =   \frac{
 \begin{vmatrix}
  p^n_1(x_1^{n})&\ldots &p^n_1(x)& \ldots& p^n_1(x_{n}^{n})\\
  \vdots &&  \vdots&& \vdots\\
  p^n_{n}(x_1^{n})&\ldots &p^n_{n}(x)& \ldots& p^n_{n}(x_{n}^{n})\\
 \end{vmatrix}   }{
 \det \left(p^n_i(x_j^{n}) \right)_{i,j=1\ldots n}}.
\]
The functions $\{l^n_\lambda : \lambda \in \Lambda^n\}$ form a basis of $P_n$, and
satisfy $l_\lambda^{n}(\mu) = \delta_{\lambda,\mu}$, for all $\lambda, \mu \in \Lambda^n$.
In addition, since $(x^n_1,\ldots,x^n_n)$  is a maximizer for
$\Delta$, we know that  $\norm{l_\lambda^{n}}_\infty =1 $.

Next, for distinct $\lambda, \mu \in \Lambda^n$,
$1 = \big|\abs{l_\lambda(\lambda)}-\abs{l_\lambda(\mu)}\big|$.
By the uniformity property \eqref{eq_uq}, it follows that the sets
$\Lambda^n$ are uniformly separated, i.e.,
\begin{align*}
\inf _{n\in \mathbb{N}} \inf \{\abs{\lambda-\mu}:  \lambda, \mu \in \Lambda^n, \lambda \not=\mu\} >0.
\end{align*}
By passing to a subsequence, we may assume that $\Lambda^n \weakconv \Lambda$, for some
separated set $\Lambda$, and because of the uniform
separation, the associated measures converge in the following manner:
\begin{align}
\label{eq_mun_mu}
\mu _n :=
\sum_{\lambda \in \Lambda^n} \delta_\lambda \longrightarrow
\mu:= \sum_{\lambda \in \Lambda} \delta_\lambda,
\qquad \mbox{ in } \sigma(\mathcal{M}, W(C_0,L^1)),
\end{align}
see, e.g., \cite[Section 4]{grorro15}.

{\em Step 2 (Construction of the dual system)}. Let $\lambda \in
\Lambda $,
then there exists a sequence $\lambda^n \in \Lambda ^n $ such that
$\lambda^n \longrightarrow \lambda$.
By passing to a subsequence we may assume that $l^n_{\lambda^n} \longrightarrow l_\lambda$ in
$\sigma(V^\infty,V^1)$, for some $l_\lambda \in V^\infty$. Note that
$\norm{l_\lambda}_\infty \leq \liminf_n \norm{l^n_{\lambda^n}}_\infty =1$.
Using Lemma \ref{lemma_wc}, it follows that \eqref{eq_ll} holds. (See also the proof
of Proposition \ref{prop:weaklimit_interp}.)

{\em Step 3 (Interpolation)}. Given $a \in \ell^1(\Lambda)$ we let $f := \sum_{\lambda} a_\lambda
l_\lambda$. Then the series converges absolutely, $\norm{f}_\infty \leq \norm{a}_1$, and
$f(\lambda)=a_\lambda$, for all $\lambda \in \Lambda$.

{\em Step 4 (Sampling)}. Fix $n \geq 1$ and let $f \in P_n$.
For all $N \geq n$, since $f \in P_N$,
$f(x) = \sum_{\lambda \in \Lambda^N} f(\lambda) l^N_\lambda(x)$.
By \eqref{eq_incl}, $f \in V^1 \subseteq W(C_0, L^1)$,
and, by \eqref{eq_mun_mu},
\begin{align*}
\norm{f}_\infty \leq \sum_{\lambda \in \Lambda^N} \abs{f(\lambda)}
= \int \abs{f} d{\mu_N} \longrightarrow
\int \abs{f} d\mu
= \sum_{\lambda \in \Lambda} \abs{f(\lambda)}.
\end{align*}
Since $\bigcup_n P_n$ is dense in $V^1$ and \eqref{eq_samp}
holds, the full sampling estimate \eqref{eq_int} now follows.
\end{proof}

\subsection{Application to Fock spaces}
We now apply the results about localizable RKHSs to weighted Fock
spaces. First we need to show that every weighted Fock space $A_\phi
^2$ is a localizable RKHS.
\begin{prop}
\label{prop_loc_fock}
Let $\phi: \Cn \to \mathbb{R} $ be a plurisubharmonic function satisfying
\eqref{eq:subharm_bounds}
and consider the weighted Fock space $A^2_{\phi}$. Then the space
\begin{align*}
V^2_\phi := \sett{f= g e^{-\phi}: g \in A^2_{\phi}}
\end{align*}
is a uniformly localizable RKHS in $L^2(\mathbb{R}^{2n})$.
\end{prop}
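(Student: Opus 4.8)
The plan is to exhibit the nodes, envelope and localized frame explicitly, using normalized reproducing kernels, and then to verify the uniform localization bound \eqref{eq_uq} as a one-line consequence of the gradient estimate \eqref{eq:c2}.

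First I would identify the reproducing kernel of $V^2_\phi$. Since $g\mapsto e^{-\phi}g$ is a unitary map from $A^2_\phi$ onto $V^2_\phi$ (with the $L^2(\mathbb{R}^{2n})$ inner product), the reproducing kernel of $V^2_\phi$ is $k_\phi(z,w)=K_\phi(z,w)e^{-\phi(z)-\phi(w)}$. It is continuous in $z$ (recall $\phi$ is continuous, indeed $C^{1,\alpha}_{loc}$, because \eqref{eq:subharm_bounds} bounds its Laplacian above and below), it satisfies $c\le k_\phi(w,w)\le C$ by \eqref{eq_diag}, and $|k_\phi(z,w)|\le Ce^{-c|z-w|}$ by \eqref{eq:offdiag-bound}. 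Hence $\env(x):=Ce^{-c|x|}$ lies in $W(C_0,L^1)(\mathbb{R}^{2n})$. I then fix a small $\varepsilon>0$ (to be chosen), put $\Gamma:=\varepsilon\mathbb{Z}^{2n}$, which is relatively separated and relatively dense, and set $\func_\gamma:=k_\phi(\cdot,\gamma)\in V^2_\phi$, so that $|\func_\gamma(x)|\le\env(x-\gamma)$.

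The only non-formal point is that, for $\varepsilon$ small enough, $\{\func_\gamma:\gamma\in\Gamma\}$ is a frame for $V^2_\phi$. By the reproducing property $\langle f,\func_\gamma\rangle_{L^2}=f(\gamma)$ for $f\in V^2_\phi$. The Bessel bound $\sum_\gamma|f(\gamma)|^2\lesssim\|f\|_2^2$ holds for any relatively separated $\Gamma$ by Corollary \ref{coro_besse}. For the lower frame bound, write $f=ge^{-\phi}$, $h:=|g|e^{-\phi}=|f|$, split $\mathbb{R}^{2n}$ into the cubes $Q_\gamma=\gamma+[0,\varepsilon)^{2n}$, and use \eqref{eq:c2} (with $p=2$, $r=1$, $\psi=\phi$), which gives a Lipschitz bound for $h$ on $Q_\gamma$ with constant $\lesssim\big(\int_{B_2(\gamma)}|g|^2e^{-2\phi}\big)^{1/2}$ (valid once $\sqrt{2n}\,\varepsilon\le1$, so $B_1(z)\subseteq B_2(\gamma)$ for $z\in Q_\gamma$). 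This yields $\int_{Q_\gamma}h^2\le\varepsilon^{2n}\big(2h(\gamma)^2+C\varepsilon^2\int_{B_2(\gamma)}|g|^2e^{-2\phi}\big)$; summing over $\gamma$ and using that each point lies in at most $\lesssim\varepsilon^{-2n}$ of the balls $B_2(\gamma)$ gives $\|f\|_2^2\le 2\varepsilon^{2n}\sum_\gamma|f(\gamma)|^2+C'\varepsilon^2\|f\|_2^2$. Choosing $\varepsilon$ with $C'\varepsilon^2\le1/2$ gives $\sum_\gamma|f(\gamma)|^2\gtrsim\|f\|_2^2$, the lower frame bound. (Alternatively one may simply invoke the known fact that a sufficiently dense separated set is sampling for $A^2_\phi$.) This shows that $V^2_\phi$ is a localizable RKHS with nodes $\Gamma$, envelope $\env$, and frame $\{\func_\gamma\}$.

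Finally, for uniform localizability I would argue as follows. By the off-diagonal decay of $K_\phi$ and $\ell^\infty$ coefficients, every $f\in (V^2_\phi)^\infty$ has the form $f=ge^{-\phi}$ with $g$ entire and $\|g\|_{\phi,\infty}=\|f\|_\infty$. Applying \eqref{eq:c2} once more (with $p=2$, $r=1$, $\psi=\phi$) together with $\int_{B_1(z)}|g|^2e^{-2\phi}\lesssim\|g\|_{\phi,\infty}^2$, one sees that $|f|=|g|e^{-\phi}$ is Lipschitz with a constant $L$ depending only on $m$, $M$ and $n$, uniformly over all $f\in(V^2_\phi)^\infty$ with $\|f\|_\infty\le1$; in particular $\big||f(x)|-|f(y)|\big|\le L|x-y|$, so the supremum in \eqref{eq_uq} is at most $L\delta\to0$. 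The main obstacle is the middle step, namely producing a genuine localized frame rather than merely a family with the right decay; but the oscillation estimate \eqref{eq:c2} handles it cleanly. I would also emphasize that it is precisely the weaker condition \eqref{eq_uq}, rather than the equicontinuity \eqref{eq_str_unif}, that makes the last step immediate, since it only involves the moduli $|f|$ and not the full kernel $k_\phi$.
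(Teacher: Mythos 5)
Your proof is correct and follows essentially the same route as the paper: lattice nodes, an exponentially decaying envelope from \eqref{eq:offdiag-bound}, a frame lower bound obtained from the oscillation estimate \eqref{eq:c2} by a perturbation/oversampling argument with $\delta$ small, and uniform localizability from the same gradient bound. The only (cosmetic) difference is the choice of frame elements — you take the reproducing kernels $k_\phi(\cdot,\gamma)$ at the lattice points, so the frame coefficients are point samples $f(\gamma)$, whereas the paper takes $\func_\gamma = P(\delta^{-d}1_{\gamma+\delta I})$, so the coefficients are local averages $\delta^{-d}\int_{\gamma+\delta I}f$ — but both variants are verified by the identical estimate.
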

\begin{proof}
Let $d=2n$ and let $P:L^2(\Rdst) \to V^2_\phi$ be the orthogonal
projection. If $K_\phi $ is the kernel of $A^2_\phi$, then the
reproducing kernel of $V^2_\phi$ is given by
\begin{equation}
  \label{eq:ll1}
 \widetilde{K_\phi}(z,w) = K_\phi (z,w)
e^{-\phi (z) - \phi (w)}
\end{equation}
 Therefore the off-diagonal decay estimate
for the producing kernel of $A^2_{\phi}$ in \eqref{eq:offdiag-bound}
reads as
\begin{align}
\label{eq_K}
\abs{\widetilde{K_\phi}(z,w)}\lesssim e^{-c\abs{z-w}}, \qquad z,w \in \Rdst,
\end{align}
for some constant $c>0$.
Let $\delta \in (0,2/\sqrt{d})$, $\Gamma := \delta \Zdst$,
$I:=[-1/2,1/2]^d$ (so that $\delta I \subseteq B_1(0)$ for $\delta <
2/\sqrt{d}$), and
\begin{align*}
\func_\gamma := P(\delta^{-d} 1_{\gamma + \delta I}), \qquad \gamma \in \Gamma.
\end{align*}
By \eqref{eq_K},
\begin{align*}
\abs{\func_\gamma(z)} \lesssim \delta^{-d} \int_{\Rdst} 1_{\gamma+\delta I}(w) e^{-c\abs{z-w}}
dm(w) \leq
C_\delta
e^{-c \abs{z-\gamma}},
\end{align*}
for some constant $C_\delta>0$. Hence the family $\Func \equiv \{\func_\gamma: \gamma \in
\Gamma\}$ satisfies the localization estimate~\eqref{eq_loc2}.

Let us show that for suitably small   $\delta \in (0,2/\sqrt{d})$,
$\Func$ forms a frame of $V^2_\phi$.
Let $f \in V^2_\phi$ and define
\begin{align*}
\tilde{f} := \sum_{\gamma \in \Gamma} \delta^{-d} \left(\int_{\delta I+\gamma} f \right)  1_{\delta I+\gamma}
= \sum_{\gamma \in \Gamma} \ip{f}{\func_\gamma}  1_{\delta I+\gamma}.
\end{align*}
Using the mean value theorem, \eqref{eq:c2} of Lemma
\ref{lem:point_eval} with $p=r=2$, and the fact that $\delta<2/\sqrt{d}$, we
obtain the pointwise estimate
\begin{align*}
\abs{f-\tilde{f}}^2 1_{\delta I+\gamma} \lesssim \delta ^2
 \int_{B_{2}(\gamma)} \abs{f(z)}^2 \, dm(z),
\end{align*}
and consequently
\begin{align}
\norm{f-\tilde{f}}^2_2 \lesssim \delta^2 \delta^{d} \sum_{\gamma \in \Gamma} \int_{B_{2}(\gamma)}
\abs{f}^2
\lesssim \delta^2 \norm{f}^2_2. \label{c8}
\end{align}
Choosing $\delta \ll 1$ small enough,  we conclude that
\begin{align*}
\norm{f}_2 \leq 2 \norm{\tilde{f}}_2 = 2 \delta^{d/2} \left( \sum_{\gamma \in \Gamma}
\abs{\ip{f}{\func_\gamma}}^2
\right)^{1/2}.
\end{align*}
The converse inequality
$\sum_{\gamma \in \Gamma} \abs{\ip{f}{\func_\gamma}}^2= \delta ^d
\|\tilde{f}\|_2^2 \leq \tfrac{3}{2}
\norm{f}^2_2$ follows also
from \eqref{c8}.
Hence  $\Func$ is a frame of $V^2(\Cn)$, as claimed.

Finally, if $f \in V^\infty_\phi$, i.e., $f=f_0 e^{-\phi }$ for $f_0
\in A^\infty _\phi $,  then by Lemma \ref{lem:point_eval}, with $r=p=1$,
\begin{equation*}
|\nabla \left(|f(z)| \right)| \lesssim \bigg[ \int_{B_1(z)}
|f(w)| dm(w) \bigg] \lesssim \norm{f}_\infty.
\end{equation*}
This implies the  property of uniform localization~\eqref{eq_uq}.

\end{proof}

\subsection{Proof of Theorem \ref{thm_wiener_samp}}
By  Proposition \ref{prop_loc_fock}, $V^2_\phi $ is a uniformly
localizable RKHS.   Theorem \ref{th_loc_ss} then  assert
the universality of sampling and interpolation for $V^p_\phi $ for all
$p\in [1,\infty ]$, which  is the same as universality for the weighted Fock spaces
$A^p_\phi $. $\qed$

\subsection{Proof of Theorem  \ref{sharp}}
\mbox{}

{\em Step 1}.
Again we  argue in terms of the reweighted spaces
\begin{align*}
V^p_{\phi} := \{f= g e^{-\phi}: g \in A^p_{\phi} \} \subseteq
  L^p(\mathbb{R}^d) \, .
\end{align*}
According to \eqref{eq_diag} and \eqref{eq:offdiag-bound}, the
reproducing kernel $\widetilde{K_\phi}$ satisfies the estimates
$\widetilde{K_\phi}(z,z) \asymp 1$ and $\widetilde{K_\phi}(z,w)
\lesssim e^{-c\abs{z-w}}$ for some constant $c>0$. These estimates
hold,  of course, also for  $\widetilde{K_{a\phi}}$ for all $a>0$ with possibly different constants.

Fix $\varepsilon >0$. To prove Theorem~\ref{sharp}, we   need to produce a set $\Lambda
\subseteq \Cn$ that  is  interpolating for
$A^p_{(1+\varepsilon)\phi}$ and  a sampling
set for  $A^p_{(1-\varepsilon)\phi}$ with  $1\leq p\leq \infty$.  By Proposition \ref{prop_loc_fock}
and Theorem \ref{th_loc_ss}, it suffices to produce  $\Lambda$ that is an
interpolating set for $V^1_{(1+\varepsilon)\phi}$ and a sampling set for
$V^\infty_{(1-\varepsilon)\phi}$.

We now show that the set $\Lambda $ constructed in  Theorem
\ref{th_fekete} as a weak limit of Fekete points does the job. Theorem
\ref{th_fekete} yields a set $\Lambda$ and interpolating functions
$\{l_\lambda: \lambda \in \Lambda\} \subseteq V^\infty_\phi$ such that
$\|l_\lambda \|_\infty \leq 1$ and $l_\lambda (\mu) = \delta
_{\lambda , \mu}$ for all $\lambda,\mu \in \Lambda $.

(i) \emph{Interpolation.}
We improve the localization of the functions $l_\lambda$  in the following way: let
\begin{align*}
\widetilde{l}_\lambda(z) := l_\lambda(z) \frac{\widetilde{K_{\varepsilon \phi}}(z,\lambda)}
{\widetilde{K_{\varepsilon \phi}}(\lambda,\lambda)}, \qquad z \in \Cn.
\end{align*}
Then clearly $\widetilde{l}_\lambda(\mu)=\delta_{\lambda,\mu}$, for all $\lambda,\mu \in \Lambda$,
and, due to the decay of $\widetilde{K_{\varepsilon \phi}}$,
\begin{align}
\label{eq_nnn}
\abs{\widetilde{l}_\lambda(z)} \lesssim
e^{-c\abs{z-\lambda}}.
\end{align}
Consequently,  we have
$$
\|\widetilde{l_\lambda}\|_1 \lesssim 1 \, .
$$
Furthermore, since  $l_\lambda = h e^{-\phi} $ for some $h\in
A^\infty_\phi $ and  $\widetilde{K_{\varepsilon\phi}}(z,\lambda ) =
e^{-\varepsilon\phi(\lambda )} K_{\varepsilon\phi}(z,\lambda)
e^{-\varepsilon\phi(z)} $ with $K_{\varepsilon\phi,\lambda } \in
A^1_{\varepsilon\phi}$,  the product   $\widetilde{l_\lambda } =
l_\lambda \widetilde{K_{\varepsilon \phi ,\lambda } }$ is in $V^1_{(1+\varepsilon)\phi}$.

Thus, for  $a \in \ell^1(\Lambda)$ the function  $f(z) :=
\sum_{\lambda \in \Lambda} a_\lambda \widetilde{l_\lambda}$ is in $
V^1_{(1+\varepsilon)\phi}$ and satisfies $f(\lambda)=a_\lambda$, as desired.

(ii) \emph{Sampling.}
Here we   want to verify the inequality
$\sup_{z \in \Cn} \abs{f(z)} \leq \sup_{\lambda \in \Lambda}
\abs{f(\lambda)}$ for every  $f \in V^\infty_{(1-\varepsilon)\phi}$. To this end, fix $z_0 \in \Cn$
and define
\begin{align*}
g(z) := f(z) \frac{\widetilde{K_{\varepsilon \phi}}(z,z_0)}
{\widetilde{K_{\varepsilon \phi}}(z_0,z_0)}.
\end{align*}
As in (i) we see that  $g \in V^1_{\phi}$. By
the $\ell^1$-$\ell ^\infty$-sampling part of Theorem~\ref{th_fekete}
applied to $g$  we obtain
\begin{align*}
\abs{f(z_0)}=\abs{g(z_0)}
\leq \sum_{\lambda \in \Lambda} \abs{g(\lambda)}
\lesssim \sum_{\lambda \in \Lambda} \abs{f(\lambda)} e^{-c\abs{z_0-\lambda}}
\leq C_\Lambda \sup_{\lambda \in \Lambda} \abs{f(\lambda)},
\end{align*}
where $C_\Lambda := \sup_{z \in \Cn} e^{-c\abs{z-\lambda}}$ is finite because $\Lambda$ is separated.

{\em Step 2}.
Let us finally show that \eqref{eq_infsup} holds.
We consider only the supremum $\sup_{\Lambda \in SI} D^{-}(\Lambda)=1$
over all interpolating sets for $A^2_\phi$; the argument for the infimum is analogous.
By Theorem \ref{th_non_strict}, the supremum in \eqref{eq_infsup} is at most 1.
To show that it is indeed 1, we
fix $\delta \in (0,1)$ and use
Proposition \ref{prop_unif_diag} to select $\varepsilon>0$ such that
$$\widetilde{K}_{(1-\varepsilon)/(1+\varepsilon)\phi}(z,z) \geq (1-\delta)
\widetilde{K_{\phi}}(z,z),$$ for all $z \in \Cn$. We now apply the construction
from Step 1 to the weight $(1+\varepsilon)^{-1}\phi$ and obtain a separated set $\Lambda \subseteq \Cn$ that is
 interpolating for $A^2_\phi$ and sampling for
$A^2_{(1-\varepsilon)\phi /(1+\varepsilon)}$.
By Theorem \ref{th_non_strict}, we conclude that
$D^{-}_{(1-\varepsilon)\phi /(1+\varepsilon)}(\Lambda) \geq 1$.
Hence, for $R \gg 1$, and all $z\in\Cn$,
\begin{align*}
\# \left(\Lambda \cap B_R(z) \right)
&\geq (1-\delta) \int_{B_R(z)} \widetilde{K}_{(1-\varepsilon)\phi /(1+\varepsilon)}(w,w) dm(w)
\\
&\geq (1-\delta)^2 \int_{B_R(z)} \widetilde{K_{\phi}}(w,w) dm(w).
\end{align*}
This means that
$D^{-}_{\phi}(\Lambda) \geq (1-\delta)^2$. Since $\delta \in (0,1)$
was arbitrary,   the conclusion follows.
$\qed$

\end{document}